\newcommand{\BP}{\mathcal{B}}
\newcommand{\NN}{\mathbb{N}}
\newcommand{\PP}{\mathbb{P}}
\newcommand{\RR}{\mathbb{R}}
\newcommand{\ZZ}{\mathbb{Z}}
\DeclareMathOperator{\syz}{syz}
\DeclareMathOperator{\Kos}{Kos}
\DeclareMathOperator{\mgd}{mgd}
\newcommand{\st}{\; | \;}                                     
\def\urltilda{\kern -.15em\lower .7ex\hbox{\~{}}\kern .04em}  
\newcommand{\flfr}[2]{\left\lfloor\frac{#1}{#2}\right\rfloor} 
\newcommand{\clfr}[2]{\left\lceil\frac{#1}{#2}\right\rceil}   
\newcommand{\rf}[2]{{\left\langle {#1} \right\rangle}_{#2}}   
\newcommand{\tup}[1]{\underline{#1}}                          
\newcommand{\tupc}[1]{\check{\tup{#1}}}                       
\newcommand{\dotcup}{\ensuremath{\mathaccent\cdot\cup}}       
\numberwithin{figure}{section}
\numberwithin{equation}{section}
\newtheorem{theorem}{Theorem}[section]
\newtheorem{lemma}[theorem]{Lemma}
\newtheorem{proposition}[theorem]{Proposition}
\newtheorem{corollary}[theorem]{Corollary}
\newtheorem{conjecture}[theorem]{Conjecture}
\theoremstyle{definition}
\newtheorem{remark}[theorem]{Remark}
\newtheorem{question}[theorem]{Question}
\newtheorem*{acknowledgement}{Acknowledgement}
\newcommand{\twodigit}[1]{\INTVAL=#1\relax\ifnum\INTVAL<10 0\fi\the\INTVAL}
\newcommand\rightnow{
           \twodigit{\the\HOUR}:\twodigit{\the\MINUTE},
           \twodigit{\number\day}.\space
           \ifcase\month\or January\or February\or March\or April\or May\or June\or July\or August\or September\or October\or November\or December\fi
           \space\number\year}
\begin{document}

\title[The Lefschetz properties in positive characteristic]{The Lefschetz properties of monomial complete intersections in positive characteristic}
\author[D.\ Cook II]{David Cook II}
\address{Department of Mathematics, University of Kentucky, 715 Patterson Office Tower, Lexington, KY 40506-0027, USA}
\email{\href{mailto:dcook@ms.uky.edu}{dcook@ms.uky.edu}}
\thanks{Part of the work for this paper was done while the author was partially supported by the National Security Agency under Grant Number H98230-09-1-0032.}
\subjclass[2010]{13A35, 13C40, 13D02}
\keywords{Complete intersections, Lefschetz properties, Syzygies, Positive characteristic}

\begin{abstract}
    Stanley proved that, in characteristic zero, all artinian monomial complete intersections have the strong Lefschetz property.
    We provide a positive characteristic complement to Stanley's result in the case of artinian monomial complete intersections 
    generated by monomials all of the same degree, and also for arbitrary artinian monomial complete intersections in characteristic two.

    To establish these results, we first prove an {\em a priori} lower bound on the characteristics that guarantee the Lefschetz properties.
    We then use a variety of techniques to complete the classifications.
\end{abstract}

\maketitle

\section{Introduction}\label{sec:intro}

Let $K$ be an infinite field of arbitrary characteristic, and let $I$ be a homogeneous artinian ideal in $R = K[x_0, \ldots, x_n]$.
The algebra $A = R/I$ is said to have the {\em strong Lefschetz property}, if there exists a linear form $\ell \in A$ such that for
all integers $d$ and $k$, with $k \geq 1$, the map $\times \ell^k : [A]_d \rightarrow [A]_{d+k}$ has maximal rank.  In this case, $\ell$
is called a {\em strong Lefschetz element} of $A$.  If the property holds for $k = 1$, then $A$ is said to have the 
{\em weak Lefschetz property}, and $\ell$ is called a {\em weak Lefschetz element} of $A$.

The Lefschetz properties have been studied extensively; the recent manuscript by Harima, Maeno, Morita, Numata, Wachi, and 
Watanabe~\cite{HMMNWW} provides a wonderfully comprehensive exploration of the Lefschetz properties.  In particular, the presence
of the properties provides interesting constraints on the Hilbert functions of the algebras (see, e.g., \cite{BMMNZ, HMNW, MZ}).

The theorem below which motivates our results was first proven by Stanley~\cite{St} using algebraic topology.  It has
since been proven in many different ways, most notably by Watanabe~\cite{Wa} using representations of $\mathfrak{gl}_2$, and later
by Reid, Roberts, and Roitman~\cite{RRR} using purely algebraic techniques.

\begin{theorem}{\rm (\cite[Theorem~2.4]{St}, \cite[Corollary~3.5]{Wa}, \cite[Theorem~10]{RRR})} \label{thm:char-zero}
    Every artinian monomial complete intersection over a polynomial ring has the strong Lefschetz property in characteristic zero.
\end{theorem}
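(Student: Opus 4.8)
The plan is to follow Watanabe's strategy and realize the algebra as a finite-dimensional representation of $\mathfrak{sl}_2$, where the strong Lefschetz property becomes a statement about raising operators on weight spaces. Write $A = R/I$ with $I = (x_0^{a_0}, \ldots, x_n^{a_n})$, so that $A \cong B_0 \otimes_K \cdots \otimes_K B_n$ as graded $K$-algebras, where $B_i = K[x_i]/(x_i^{a_i})$. First I would equip each factor $B_i$, with $K$-basis $1, x_i, \ldots, x_i^{a_i-1}$, with an $\mathfrak{sl}_2$-triple $(e_i, h_i, f_i)$: let $e_i$ be multiplication by $x_i$ (so $e_i(x_i^{a_i-1})=0$), let $h_i(x_i^j) = (2j - (a_i-1))x_i^j$, and let $f_i(x_i^j) = j(a_i - j)x_i^{j-1}$. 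A direct check of $[h_i,e_i]=2e_i$, $[h_i,f_i]=-2f_i$, and $[e_i,f_i]=h_i$ shows that $B_i$ is the irreducible $\mathfrak{sl}_2$-module of highest weight $a_i - 1$; here characteristic zero guarantees that the structure constants $j(a_i-j)$ behave as required.

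Next I would place the diagonal $\mathfrak{sl}_2$-action on the tensor product $A$, so that $e := \sum_i (1\otimes\cdots\otimes e_i\otimes\cdots\otimes 1)$, together with the analogous $h$ and $f$, defines a module structure. Under this identification multiplication by $x_i$ on $A$ is exactly the $i$-th operator $e_i$, and hence multiplication by the linear form $\ell := x_0 + \cdots + x_n$ coincides with the diagonal raising operator $e$. Moreover the internal grading of $A$ agrees, up to the shift by the socle degree $D := \sum_i (a_i - 1)$, with the weight-space decomposition for $h$: the graded piece $[A]_d$ is precisely the $h$-eigenspace of weight $2d - D$.

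The final step invokes complete reducibility. Over a field of characteristic zero, Weyl's theorem decomposes $A$ into a direct sum of irreducible $\mathfrak{sl}_2$-modules, and the standard lemma on $\mathfrak{sl}_2$-representations states that for any finite-dimensional module the iterated raising operator $e^k$ restricts to a map of maximal rank between the weight spaces of weights $w$ and $w+2k$. Translating this back through the identifications of the previous step, the map $\times \ell^k : [A]_d \to [A]_{d+k}$ has maximal rank for all $d$ and all $k \geq 1$, so $\ell$ is a strong Lefschetz element and $A$ has the strong Lefschetz property.

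I expect the main obstacle to be locating and justifying exactly where characteristic zero is indispensable, since this is precisely what the remainder of the paper must relax. Two places rely on it: the nonvanishing of the coefficients $j(a_i-j)$ that make each $B_i$ a genuine irreducible $\mathfrak{sl}_2$-module, and, more essentially, the appeal to complete reducibility, which underlies both the decomposition into irreducibles and the maximal-rank lemma. In positive characteristic neither is automatic, which is why an \emph{a priori} lower bound on the characteristic, followed by finer techniques, will be needed.
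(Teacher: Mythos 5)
Your argument is correct, but note that the paper does not prove Theorem~\ref{thm:char-zero} at all: it is quoted from the literature with citations to Stanley, Watanabe, and Reid--Roberts--Roitman, and your proposal is essentially a faithful reconstruction of Watanabe's $\mathfrak{sl}_2$ proof (the second of the three cited sources). The only point worth a word more is the final ``standard lemma'': maximal rank of $e^k$ on weight spaces is not preserved by arbitrary direct sums of linear maps, so you should note that on each irreducible summand $e^k\colon V_w \to V_{w+2k}$ is injective whenever $w+k\le 0$ and surjective whenever $w+k\ge 0$, uniformly in the summand, which is what lets complete reducibility deliver maximal rank for the whole module; with that observation supplied, the proof is complete and correctly isolates the two uses of characteristic zero.
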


For a brief, but extensive overview of the depth with which Theorem~\ref{thm:char-zero} has inspired explorations of the Lefschetz properties, 
see the survey~\cite{MN} by Migliore and Nagel.

We emphasise that the above result, and most related results, are specific to characteristic zero.  However, there has been a great deal
of recent interest in positive characteristic (see, e.g., \cite{CGJL, CN-Enum, LZ}).  Specifically, Brenner and Kaid~\cite{BK-p} 
(for three variables) and Kustin and Vraciu~\cite{KV} (for at least four variables) completely characterised the characteristics
in which the weak Lefschetz property is present for monomial complete intersections generated by monomials all having the same degree.  Although the failure
of the weak Lefschetz property implies the failure of the strong Lefschetz property, we must do more work to establish the presence of the
strong Lefschetz property.

The goal of this note is to provide complements to Theorem~\ref{thm:char-zero} in characteristic two (see Theorem~\ref{thm:classify-char-two})
and further in the case of generation by monomials of the same degree (see Theorem~\ref{thm:classify-fixed-d}).  The remainder of the manuscript
is organised as follows:  In Section~\ref{sec:est} we describe a few old and new ways to establish the Lefschetz properties, specifically 
in the case of monomial complete intersections.  In Section~\ref{sec:bound} we describe the characteristics in which the Lefschetz properties
may fail, and prove they are bounded linearly in the degrees of the generating monomials.  The proofs involve an analysis of the prime divisors
of an associated determinant.

As demonstrated in \cite{KV}, when fewer variables are used, exploring the presence of the Lefschetz properties becomes more interesting.  
In Sections~\ref{sec:two} and~\ref{sec:three} we consider monomial complete intersections in two and three variables, respectively.  In 
Section~\ref{sec:many} we handle the case of at least four variables.  Throughout these three sections, we use a variety of techniques
to establish the presence and failure of the Lefschetz properties.  These techniques include determining syzygy gaps (see Subsection~\ref{sub:syzygy}), 
using basic number theory (e.g., see Lemma~\ref{lem:one-or-the-other-n-1}), and finding explicit syzygies of small degree (see Subsection~\ref{sub:mgd}).

Finally, in Section~\ref{sec:conclusions} we close with the desired classifications and a few comments.

\section{Establishing the Lefschetz properties}\label{sec:est}

Let $K$ be an infinite field of arbitrary characteristic.  All artinian monomial complete intersections over the polynomial ring 
$R =  K[x_0,\ldots, x_n]$ are of the form $R/I_{\tup{d}}$, where
\[
    I_{\tup{d}} = (x_0^{d_0}, x_1^{d_1}, \ldots, x_n^{d_n}),
\]
$\tup{d} = (d_0, d_1, \ldots, d_n) \in \NN^{n+1}$, and, without loss of generality, $d_0 \geq d_1 \geq \cdots \geq d_n \geq 2$.
Throughout the remainder of the manuscript we use the above definition of $I_{\tup{d}}$.

\subsection{The weak Lefschetz property}

Notice that the socle degree of $R/I_{\tup{d}}$ is $t := d_0 + \ldots + d_n - (n+1)$.  Moreover, if the largest generating
degree is sufficiently large (relative to the socle degree), then the weak Lefschetz property always holds.

\begin{proposition}{\cite[Proposition~5.2]{MM}} \label{pro:large-degree}
    Let $\tup{d} \in \NN^{n+1}$, $d_0 \geq d_1 \geq \cdots \geq d_n \geq 2$, and $t = d_0 + \cdots + d_n - (n+1)$.
    If $d_0 > \clfr{t}{2}$, then $R/I_{\tup{d}}$ has the weak Lefschetz property, regardless of the characteristic of $K$.
\end{proposition}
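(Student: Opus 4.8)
The plan is to prove the stronger statement that the variable $x_0$ itself is a weak Lefschetz element of $A := R/I_{\tup{d}}$; this is the most direct way to exploit the hypothesis that $d_0$ is large. First I would record two standing facts. Since $A$ is an artinian complete intersection it is Gorenstein with socle degree $t$, the one-dimensional socle being spanned by the image of $x_0^{d_0-1}\cdots x_n^{d_n-1}$; consequently the monomial pairing $[A]_a \times [A]_{t-a} \to [A]_t \cong K$ that sends a pair of complementary monomials to the socle generator is perfect. Second, to verify the weak Lefschetz property for a single fixed linear form it suffices to check that each map $\times x_0 : [A]_d \to [A]_{d+1}$ is injective or surjective, since an injective (respectively surjective) linear map automatically has maximal rank.

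Next I would analyse the low-degree behaviour. The hypothesis $d_0 > \clfr{t}{2}$ gives $d_0 - 1 \ge \clfr{t}{2}$, so for every $e \le \clfr{t}{2}$ no multiple of $x_0^{d_0}$ lives in degree $e$ (as $d_0 > e$), whence $[A]_e = [B]_e$, where $B := R/(x_1^{d_1}, \ldots, x_n^{d_n}) \cong S[x_0]$ with $S = K[x_1,\ldots,x_n]/(x_1^{d_1},\ldots,x_n^{d_n})$. In $B$ the element $x_0$ is a nonzerodivisor, so $\times x_0$ is injective in every degree of $B$. Since for $d \le \clfr{t}{2} - 1$ both the source and target degrees $d$ and $d+1$ are at most $\clfr{t}{2} \le d_0 - 1$, the map $\times x_0 : [A]_d \to [A]_{d+1}$ coincides with the corresponding injective map in $B$; thus $\times x_0$ is injective for all $d \le \clfr{t}{2}-1$.

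I would then transfer this to the upper half by duality. Under the perfect pairing above, $\times x_0 : [A]_d \to [A]_{d+1}$ is adjoint to $\times x_0 : [A]_{t-d-1} \to [A]_{t-d}$, since both pairings reduce to multiplying three classes and reading off the socle coefficient. Hence injectivity of the first is equivalent to surjectivity of the second. Applying this to the injective maps just produced shows that $\times x_0 : [A]_e \to [A]_{e+1}$ is surjective for every $e \ge t - (\clfr{t}{2}-1) - 1 = \flfr{t}{2}$.

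Finally I would assemble the two ranges. The degrees $d \le \clfr{t}{2}-1$ and $e \ge \flfr{t}{2}$ together exhaust all degrees $0 \le d \le t-1$ (beyond which every map is zero), so each map $\times x_0$ is injective or surjective and hence of maximal rank; thus $x_0$ is a weak Lefschetz element and the conclusion holds in every characteristic. I expect the only delicate point to be the degree bookkeeping, namely confirming that the bound $d_0 > \clfr{t}{2}$ is exactly what forces the whole ``first half'' of the multiplication maps to live strictly below degree $d_0$ (where $A$ and $B$ agree), and that the resulting injective and surjective ranges meet with no gap. The adjointness of $\times x_0$ is the other point to state with care, but for monomial ideals it follows immediately from the complementation pairing.
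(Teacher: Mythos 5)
Your argument is correct, and it is essentially the standard proof of this result: the paper itself gives no proof (Proposition~\ref{pro:large-degree} is quoted from \cite[Proposition~5.2]{MM}), and the argument there rests on the same two ingredients you use, namely that $A$ agrees with $S[x_0]$ (on which $x_0$ is a nonzerodivisor) in all degrees up to $\clfr{t}{2} \le d_0-1$, and that Gorenstein duality converts injectivity in the bottom half into surjectivity in the top half, the hypothesis $d_0 > \clfr{t}{2}$ being exactly what makes the two ranges meet. Your degree bookkeeping and the self-adjointness of $\times x_0$ under the (characteristic-free, permutation-matrix) socle pairing both check out, so no gap remains.
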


We recall that for level algebras if multiplication by a general linear form is injective in a fixed degree, then
it is injective for all the previous degrees.

\begin{proposition}{\cite[Proposition~2.1(b-c)]{MMN}} \label{pro:mmn21}
    Let $A = R/I$ be a level artinian standard graded $K$-algebra, and let $\ell$ be a general linear form.  Consider the
    homomorphisms $\varphi_d: [A]_d \rightarrow [A]_{d+1}$ defined by multiplication by $\ell$ for $d \geq 0$. 
    \begin{enumerate}
        \item If $\varphi_d$ is injective for some $d \geq 1$, then $\varphi_{d-1}$ is injective.
        \item In particular, if $\dim_K [A]_{d} = \dim_K [A]_{d+1}$, then $A$ has the
            weak Lefschetz property if and only if $\varphi_d$ is injective (or, equivalently, surjective).
    \end{enumerate}
\end{proposition}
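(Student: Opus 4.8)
The plan is to prove both parts through Matlis duality, reducing assertions about injectivity of the maps $\varphi_d$ on $A$ to assertions about surjectivity of multiplication by $\ell$ on the canonical module $\omega_A = \mathrm{Hom}_K(A,K)$, into which the level hypothesis will enter as the condition that $\omega_A$ is generated in a single degree. I expect the duality bookkeeping in the first step, together with the recognition that the level hypothesis is exactly what makes a propagation lemma applicable to $\omega_A$, to be the main obstacle; once those are in place the rest is formal.

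First I would record the duality precisely. Grading $\omega_A$ by $[\omega_A]_j = \mathrm{Hom}_K([A]_{-j},K)$ and using the module structure $(a\cdot\phi)(b)=\phi(ab)$, one checks that the map $\times\ell\colon[\omega_A]_{-d-1}\to[\omega_A]_{-d}$ is exactly the $K$-linear transpose of $\varphi_d\colon[A]_d\to[A]_{d+1}$, since for $\phi\in[\omega_A]_{-d-1}$ and $b\in[A]_d$ one has $(\ell\phi)(b)=\phi(\ell b)=(\phi\circ\varphi_d)(b)$. Consequently $\varphi_d$ is injective if and only if this multiplication map on $\omega_A$ is surjective. I would then invoke the standard fact that $A$ is level of socle degree $t$ precisely when $\omega_A$ is generated in the single degree $-t$; this is the point at which the hypothesis is used.

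Next I would isolate the key lemma: if $M$ is a finitely generated graded $R$-module generated in a single degree $-t$ and $\ell$ is general, then surjectivity of $\times\ell\colon M_s\to M_{s+1}$ forces surjectivity of $\times\ell\colon M_{s+1}\to M_{s+2}$ whenever $s+1\ge -t$. I would prove this by passing to $\bar{M}=M/\ell M$: surjectivity in degree $s$ says $[\bar{M}]_{s+1}=0$, and because $M$, hence $\bar{M}$, has no generators above degree $-t$, we get $[\bar{M}]_{s+2}=R_1\cdot[\bar{M}]_{s+1}=0$, which is surjectivity in degree $s+1$. Applying this lemma to $M=\omega_A$ and translating back through the transpose identity above yields part (i): $\varphi_d$ injective implies $\varphi_{d-1}$ injective. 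I note that the cyclic structure of $A=R/I$ makes the corresponding upward propagation of surjectivity on $A$ itself automatic (apply the same lemma to $M=A$, which is generated in degree $0$), so the genuine content of (i) rests entirely on the single-degree generation of $\omega_A$.

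Finally, for part (ii) I would combine (i) with this surjective counterpart on $A$. When $\dim_K[A]_d=\dim_K[A]_{d+1}$, injectivity and surjectivity of $\varphi_d$ coincide. If $\varphi_d$ is injective, then part (i) gives injectivity, hence maximal rank, for all $k\le d$, while the cyclic propagation gives surjectivity, hence maximal rank, for all $k\ge d$; together every $\varphi_k$ has maximal rank, so $A$ has the weak Lefschetz property. The converse is immediate, since the weak Lefschetz property forces maximal rank in degree $d$, which under the equality of dimensions means $\varphi_d$ is bijective, and in particular injective.
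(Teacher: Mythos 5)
The paper does not prove this proposition---it is quoted verbatim from \cite[Proposition~2.1(b--c)]{MMN}---so there is no in-paper argument to compare against. Your proof is correct, and it is essentially the standard argument (the one underlying the cited source): dualize so that injectivity of $\varphi_d$ becomes surjectivity of $\times\ell$ on $\omega_A=\mathrm{Hom}_K(A,K)$, use that levelness is exactly generation of $\omega_A$ in the single degree $-t$, and propagate surjectivity upward in degree via $M/\ell M$ for a module generated in one degree; part (ii) then follows by combining this with the automatic upward propagation of surjectivity on the cyclic module $A$ itself. The only caveat worth recording is the degree range: your key lemma applies to $\omega_A$ precisely for $1\le d\le t$, which is exactly the range in which statement (i) is true (for $d>t$ the map $\varphi_d$ is vacuously injective while $\varphi_t$ is not), so your restriction is the correct one and reflects an implicit hypothesis of the quoted statement rather than a gap in your argument.
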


Moreover, a monomial algebra has the weak (strong) Lefschetz property exactly when the sum of the variables is a weak (strong) Lefschetz element.

\begin{proposition}{\cite[Proposition~2.2]{MMN}} \label{pro:mono}
    Let $A = R/I$ be an artinian standard graded $K$-algebra with $I$ generated by monomials.  Then $A$ has the weak (strong)
    Lefschetz property if and only if $x_0 + \cdots + x_n$ is a weak (strong) Lefschetz element of $A$.
\end{proposition}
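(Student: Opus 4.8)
The plan is to prove only the nontrivial implication, since the converse is immediate from the definition: if $x_0 + \cdots + x_n$ is a weak (strong) Lefschetz element, then $A$ has the corresponding property by fiat. So I would assume $A$ has the weak (strong) Lefschetz property and produce the specific form $x_0 + \cdots + x_n$ as a witness. The key structural input is that when $I$ is monomial the algebra $A$ carries a large automorphism group, namely an algebraic torus, and that the Lefschetz property is invariant under graded algebra automorphisms.

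Concretely, I would let $T = (K^{*})^{n+1}$ act on $R$ by rescaling variables, $\lambda \cdot x_i = \lambda_i x_i$ for $\lambda = (\lambda_0, \ldots, \lambda_n) \in T$. Since $I$ is generated by monomials, each generator is sent to a scalar multiple of itself, so this action fixes $I$ and descends to a graded $K$-algebra automorphism $\phi_\lambda$ of $A$. The first step is the observation that automorphisms preserve Lefschetz elements: if $\phi$ is a graded automorphism of $A$ and $\ell \in [A]_1$ is a weak (strong) Lefschetz element, then so is $\phi(\ell)$. Indeed, for each $d$ and $k$ the map $\times \phi(\ell)^k$ on $[A]_d \to [A]_{d+k}$ is conjugate, via the degreewise isomorphisms induced by $\phi$, to $\times \ell^k$, so the two maps have equal rank; hence one has maximal rank exactly when the other does.

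The second step is to locate a Lefschetz element with all coefficients nonzero and then move it onto $x_0 + \cdots + x_n$ by a torus element. I would argue that the set $U \subseteq [A]_1 \cong K^{n+1}$ of weak (strong) Lefschetz elements is Zariski-open, because the maximal-rank conditions on the relevant multiplication maps are the non-vanishing of suitable minors of matrices whose entries are polynomial in the coefficients of $\ell$; since $A$ is artinian, only finitely many pairs $(d,k)$ matter, so $U$ is a finite intersection of open sets. By hypothesis $U \neq \emptyset$, and since $K$ is infinite and $K^{n+1}$ is irreducible, $U$ is dense. The locus $V$ where every coefficient is nonzero is also a nonempty open set, so $U \cap V \neq \emptyset$; I would then choose $\ell = \sum_i a_i x_i \in U \cap V$, so each $a_i \neq 0$. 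Taking $\lambda = (a_0^{-1}, \ldots, a_n^{-1}) \in T$, the automorphism $\phi_\lambda$ sends $\ell$ to $\sum_i a_i \lambda_i x_i = x_0 + \cdots + x_n$, which is therefore a weak (strong) Lefschetz element by the first step.

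I expect the main obstacle to be the openness of the Lefschetz locus $U$, which is what licenses the passage from ``some Lefschetz element exists'' to ``some Lefschetz element with all coefficients nonzero exists.'' This is the standard fact that having maximal rank is an open condition, but it must be tracked uniformly across all the finitely many relevant degrees $d$ (and, for the strong property, all shifts $k$). The torus-invariance of $I$ and the automorphism-invariance of the property are then purely formal. The infiniteness of $K$ is essential at the end, as it guarantees that the nonempty Zariski-open set $U \cap V$ actually contains a $K$-point.
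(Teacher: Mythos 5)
Your proof is correct, and it is essentially the standard argument: the paper itself gives no proof of this statement but imports it from the cited reference \cite{MMN}, whose Proposition~2.2 is proved in exactly this way (a general Lefschetz element has all coefficients nonzero by openness of the Lefschetz locus over an infinite field, and the torus of variable rescalings preserves the monomial ideal and carries that element to $x_0+\cdots+x_n$). The one point worth keeping explicit, as you do, is that the maximal-rank conditions involve only finitely many pairs $(d,k)$ because $A$ is artinian, so the Lefschetz locus is genuinely a finite intersection of nonempty Zariski-open sets.
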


Thus, in the case of an artinian monomial complete intersection, we have a series of conditions on the algebra that are
equivalent to the algebra having the weak Lefschetz property.

\begin{lemma} \label{lem:wlp-odd}
    Let $\ell = x_0 + \cdots + x_n$.  Suppose $t$ is odd and set $s = \flfr{t}{2}$.  Then the following are equivalent (where
    the ordering on the $d_i$ is ignored):
    \begin{enumerate}
        \item The algebra $R/I_{\tup{d}}$ has the weak Lefschetz property;
        \item the multiplication map $\times \ell: [R/I_{\tup{d}}]_{s} \rightarrow [R/I_{\tup{d}}]_{s+1}$ is an injection;
        \item the $K$-dimension of $[R/(I_{\tup{d}}, \ell)]_{s+1}$ is $0$;
        \item the $K$-dimension of $[S/J_{\tup{d}}]_{s+1}$ is $0$, where $S = K[x_1,\ldots,x_n]$ and
            \[
                J_{\tup{d}} = ((x_1+\cdots+x_n)^{d_0}, x_1^{d_1}, \ldots, x_n^{d_n}).
            \]
    \end{enumerate}
\end{lemma}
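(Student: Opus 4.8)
The plan is to prove the cycle of equivalences by first recording the structural facts about $A := R/I_{\tup{d}}$ and then treating the three links (i)$\Leftrightarrow$(ii), (ii)$\Leftrightarrow$(iii), and (iii)$\Leftrightarrow$(iv) in turn. Since $A$ is an artinian monomial complete intersection it is Gorenstein, hence level, and its Hilbert function is symmetric about $t/2$. Because $t$ is odd we have $t = 2s+1$, so $t-s = s+1$ and symmetry yields $\dim_K[A]_s = \dim_K[A]_{s+1}$; this equality of the two middle dimensions is the fact that drives everything.

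For (i)$\Leftrightarrow$(ii) I would argue the two directions separately. If $A$ has the weak Lefschetz property, then by Proposition~\ref{pro:mono} the specific form $\ell = x_0 + \cdots + x_n$ is a weak Lefschetz element, so $\times\ell$ has maximal rank in every degree; at degree $s$, where source and target have equal dimension, maximal rank means injectivity, giving (ii). Conversely, if $\times\ell$ is injective at degree $s$ then, the two dimensions being equal, it is bijective, so the \emph{maximal possible} rank is already attained by the particular form $\ell$. Since the rank of $\times L$ on $[A]_s$ is a lower semicontinuous function of the linear form $L$, a general linear form must then also be injective at degree $s$, and Proposition~\ref{pro:mmn21}(ii) converts this into the weak Lefschetz property, i.e.\ (i).

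The remaining two equivalences are essentially bookkeeping. The cokernel of $\times\ell\colon [A]_s \to [A]_{s+1}$ is exactly $[A/\ell A]_{s+1} = [R/(I_{\tup{d}},\ell)]_{s+1}$, and because the two dimensions agree the map is injective if and only if it is surjective if and only if this cokernel vanishes, which is (iii). For (iii)$\Leftrightarrow$(iv) I would eliminate $x_0$ using the relation $\ell = 0$, i.e.\ substitute $x_0 = -(x_1+\cdots+x_n)$. This substitution is a graded $K$-algebra isomorphism $R/(\ell) \cong S$ with $S = K[x_1,\ldots,x_n]$; it carries $x_0^{d_0}$ to $(-1)^{d_0}(x_1+\cdots+x_n)^{d_0}$, a unit multiple generating the same ideal, and fixes each $x_i^{d_i}$, so it descends to a degree-preserving isomorphism $R/(I_{\tup{d}},\ell) \cong S/J_{\tup{d}}$. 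In particular the two algebras have the same dimension in degree $s+1$ (indeed in every degree), giving (iii)$\Leftrightarrow$(iv). Since one may eliminate any single variable in this fashion, which $d_i$ appears as the exponent on the linear form is immaterial, which explains the parenthetical remark that the ordering may be ignored.

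The main obstacle is the passage (ii)$\Rightarrow$(i): the results we can cite cleanly (Propositions~\ref{pro:mmn21} and~\ref{pro:mono}) speak about a \emph{general} linear form and about the \emph{full} maximal-rank condition, whereas (ii) only asserts injectivity of one \emph{specific} form in one degree. Bridging this gap is exactly what the semicontinuity of the rank (to move from the specific $\ell$ to a general form) together with the level structure packaged in Proposition~\ref{pro:mmn21}(ii) (to promote injectivity in the single middle degree to the property in all degrees) accomplishes; the two purely computational links are routine by comparison.
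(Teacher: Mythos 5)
Your proposal is correct and follows essentially the same route as the paper: Proposition~\ref{pro:mono} to reduce to the specific form $\ell$, the symmetric (Gorenstein) Hilbert function plus Proposition~\ref{pro:mmn21}(ii) for (i)$\Leftrightarrow$(ii), the cokernel identification for (ii)$\Leftrightarrow$(iii), and the elimination $x_0 \mapsto -(x_1+\cdots+x_n)$ for (iii)$\Leftrightarrow$(iv). Your explicit semicontinuity step to pass from the specific $\ell$ to a general linear form is a detail the paper leaves implicit, but it is the right way to fill that gap.
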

\begin{proof}
    By Proposition~\ref{pro:mono}, as $I_{\tup{d}}$ is a monomial ideal, it suffices to consider $\ell = x_0 + \ldots + x_n$.

    The equivalences follow as:
    \begin{itemize} 
        \item[] (i) \& (ii): use Proposition~\ref{pro:mmn21}(ii) and duality;
        \item[] (ii) \& (iii): $[R/(I_{\tup{d}}, \ell)]_{s+1}$ is the cokernel of the map in (ii); and
        \item[] (iii) \& (iv): $[R/(I_{\tup{d}}, \ell)]_{s+1} \cong [S/J_{\tup{d}}]_{s+1}$.\qedhere
    \end{itemize}
\end{proof}

If the socle degree is even, then the weak Lefschetz property is sometimes inherited.

\begin{corollary} \label{cor:wlp-even}
    If $t$ is even and $R[x_{n+1}]/I_{(\tup{d},2)}$ has the weak Lefschetz property, then $R/I_{\tup{d}}$ has the weak Lefschetz property.
\end{corollary}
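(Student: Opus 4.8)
The plan is to realise the larger algebra as a tensor product and then transport the injectivity criterion of Lemma~\ref{lem:wlp-odd} back down to $R/I_{\tup{d}}$. Set $A = R/I_{\tup{d}}$ and $C = R[x_{n+1}]/I_{(\tup{d},2)}$, and note that $C \cong A \otimes_K K[x_{n+1}]/(x_{n+1}^2)$ since the two ideals involve disjoint sets of variables. As $K[x_{n+1}]/(x_{n+1}^2)$ has Hilbert function $1, 1$, this yields a graded decomposition
\[
    [C]_m \cong [A]_m \oplus [A]_{m-1}\,x_{n+1}
\]
in every degree $m$. The socle degree of $C$ is $t + 1$, which is odd because $t$ is even, and $\flfr{t+1}{2} = t/2$; hence Lemma~\ref{lem:wlp-odd}, applied to the complete intersection $C$, is available at the degree $s = t/2$.

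First I would record that, by the equivalence of (i) and (ii) in Lemma~\ref{lem:wlp-odd}, the hypothesis that $C$ has the weak Lefschetz property is equivalent to the injectivity of $\times L \colon [C]_{t/2} \to [C]_{t/2+1}$, where $L = x_0 + \cdots + x_{n+1}$. Writing $L = \ell + x_{n+1}$ with $\ell = x_0 + \cdots + x_n$ and using $x_{n+1}^2 = 0$, a short computation in the decomposition above expresses this map as
\[
    a + b\,x_{n+1} \;\longmapsto\; \ell a + (a + \ell b)\,x_{n+1}, \qquad a \in [A]_{t/2},\ b \in [A]_{t/2-1};
\]
that is, it is the lower triangular map $(a, b) \mapsto (\ell a,\; a + \ell b)$ assembled from multiplication by $\ell$ on $A$.

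Next I would extract the consequence for $A$. Restricting the displayed map to the summand $[A]_{t/2-1}\,x_{n+1}$ sends $b\,x_{n+1}$ to $(\ell b)\,x_{n+1}$, so the injectivity of $\times L$ forces $\times\ell \colon [A]_{t/2-1} \to [A]_{t/2}$ to be injective. Since $A$ is an artinian monomial complete intersection, it is Gorenstein with symmetric, unimodal Hilbert function peaking in degree $t/2$; thus injectivity of $\times\ell$ in degree $t/2-1$ propagates to all lower degrees by Proposition~\ref{pro:mmn21}(i), and the required surjectivity of $\times\ell$ in degrees $\geq t/2$ follows from these same injectivities via Gorenstein duality. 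Therefore $\times\ell$ has maximal rank in every degree, and by Proposition~\ref{pro:mono} the algebra $A = R/I_{\tup{d}}$ has the weak Lefschetz property.

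I expect the last step to be the main obstacle: unlike the odd socle degree case handled cleanly by Lemma~\ref{lem:wlp-odd}, there is no single-degree criterion to quote for even $t$, so I must combine the Gorenstein symmetry of the complete intersection with Proposition~\ref{pro:mmn21}(i) and duality to upgrade the one middle injectivity into the full maximal-rank condition. By comparison, the tensor-product decomposition and the triangular shape of $\times L$ are routine.
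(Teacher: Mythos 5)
Your proof is correct, and it reaches the same key intermediate fact as the paper --- injectivity of $\times\ell\colon [R/I_{\tup{d}}]_{t/2-1}\to[R/I_{\tup{d}}]_{t/2}$ --- but by a different mechanism. The paper applies Lemma~\ref{lem:wlp-odd}(iii) to the enlarged algebra to get $[R/(I_{\tup{d}},\ell^2)]_{t/2+1}=0$, i.e.\ surjectivity of $\times\ell^2\colon [R/I_{\tup{d}}]_{t/2-1}\to[R/I_{\tup{d}}]_{t/2+1}$, upgrades this to a bijection via the symmetry of the Hilbert function, and then factors $\ell^2=\ell\cdot\ell$ to extract injectivity of the first factor. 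You instead use part (ii) of the lemma, decompose $R[x_{n+1}]/I_{(\tup{d},2)}$ as $A\otimes_K K[x_{n+1}]/(x_{n+1}^2)$, and read the injectivity of $\times\ell$ in degree $t/2-1$ directly off the triangular block form of $\times(\ell+x_{n+1})$ restricted to the summand $[A]_{t/2-1}x_{n+1}$. Your route is marginally more elementary at that step, since it does not need $\dim_K[A]_{t/2-1}=\dim_K[A]_{t/2+1}$ to pass from surjectivity to injectivity; the paper's route stays entirely within the cokernel formalism of Lemma~\ref{lem:wlp-odd}, which is the form reused elsewhere (e.g.\ in Proposition~\ref{pro:slp}). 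The closing step --- Proposition~\ref{pro:mmn21}(i) to push injectivity below the peak and Gorenstein duality to convert it into surjectivity above --- is identical in both arguments, and your explicit acknowledgement that unimodality and symmetry of the Hilbert function are what make ``injective below the peak, surjective above'' equal to ``maximal rank in every degree'' is a point the paper leaves implicit.
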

\begin{proof}
    Notice that $s = \frac{t}{2} = \flfr{t+1}{2}$ as $t$ is even.  Set $\ell := x_0 + \cdots x_n$.

    By Lemma~\ref{lem:wlp-odd}, if $R[x_{n+1}]/I_{(\tup{d},2)}$ has the weak Lefschetz property, then $K$-dimension of 
    $[R/(I_{\tup{d}}, \ell^2)]_{s+1}$ is zero.  Moreover, this is the cokernel of the map $\times \ell^2: [R/I_{\tup{d}}]_{s-1} \rightarrow [R/I_{\tup{d}}]_{s+1}$;
    hence the map is a bijection.  This implies $\times \ell: [R/I_{\tup{d}}]_{s-1} \rightarrow [R/I_{\tup{d}}]_{s}$ is an injection.
    Thus, using Proposition~\ref{pro:mmn21}(ii) and duality we have that $R/I_{\tup{d}}$ has the weak Lefschetz property.
\end{proof}

\subsection{The strong Lefschetz property}

If the Hilbert function is symmetric, then demonstrating the strong Lefschetz property is equivalent to showing certain maps are bijections.

\begin{remark} \label{rem:slp-is-ssp}
    An artinian algebra $A$ with socle degree $t$ has the {\em strong Stanley property} if there exists a linear form $\ell \in A$ such that the 
    map $\times \ell^{t-2k}: [A]_k \rightarrow A_{t-k}$ is a bijection for all $0 \leq k \leq \flfr{t}{2}$.  Clearly then, an artinian algebra has the 
    strong Stanley property if and only if the algebra has the strong Lefschetz property and has a symmetric Hilbert function.  
\end{remark}

It follows that $R/I_{\tup{d}}$ has the strong Stanley property if and only if it has the strong Lefschetz property.  Moreover, this
provides a deeper connection between the strong and weak Lefschetz properties.

\begin{proposition} \label{pro:slp}
    $R/I_{\tup{d}}$ has the strong Lefschetz property if and only if $R[x_{n+1}]/I_{(\tup{d}, t-2k)}$ has the weak Lefschetz property
    for each $0 \leq k \leq \flfr{t}{2}$.
\end{proposition}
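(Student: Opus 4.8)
The plan is to pass through the strong Stanley property of Remark~\ref{rem:slp-is-ssp} and then to recognise, for each fixed $k$, that the relevant bijection on $R/I_{\tup{d}}$ is exactly encoded by the weak Lefschetz property of a one-variable extension. Since $R/I_{\tup{d}}$ is an artinian complete intersection it is Gorenstein, and so has a symmetric Hilbert function; thus Remark~\ref{rem:slp-is-ssp} tells us that $R/I_{\tup{d}}$ has the strong Lefschetz property if and only if it has the strong Stanley property, i.e.\ there is a linear form $\ell$ so that $\times \ell^{t-2k}\colon [R/I_{\tup{d}}]_k \rightarrow [R/I_{\tup{d}}]_{t-k}$ is a bijection for every $0 \leq k \leq \flfr{t}{2}$. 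Because $I_{\tup{d}}$ is monomial, Proposition~\ref{pro:mono} lets us fix $\ell = x_0 + \cdots + x_n$ throughout. It therefore suffices to prove, for each individual $k$, that this single bijection is equivalent to the weak Lefschetz property of $R[x_{n+1}]/I_{(\tup{d}, t-2k)}$.

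Fix $k$ and write $e = t - 2k$ and $B = R[x_{n+1}]/I_{(\tup{d}, e)}$. A short computation shows the socle degree of $B$ is $2(t-k) - 1$, which is odd, so Lemma~\ref{lem:wlp-odd} applies: setting $\ell' = x_0 + \cdots + x_{n+1}$ and $s' = t - k - 1$, the ring $B$ has the weak Lefschetz property if and only if $\dim_K [R[x_{n+1}]/(I_{(\tup{d}, e)}, \ell')]_{s'+1} = 0$. The key move is then to eliminate the extra variable: quotienting by $\ell'$ realises the isomorphism $R[x_{n+1}]/(\ell') \cong R$ sending $x_{n+1} \mapsto -\ell$, under which $x_{n+1}^e \mapsto (-1)^e \ell^e$ and hence $I_{(\tup{d}, e)}$ maps onto $(I_{\tup{d}}, \ell^e)$. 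Thus the displayed dimension equals $\dim_K [R/(I_{\tup{d}}, \ell^e)]_{t-k}$, which is precisely the cokernel of $\times \ell^e \colon [R/I_{\tup{d}}]_{k} \rightarrow [R/I_{\tup{d}}]_{t-k}$, since $(t-k) - e = k$.

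Finally, I would invoke the symmetry of the Hilbert function once more: as $\dim_K [R/I_{\tup{d}}]_k = \dim_K [R/I_{\tup{d}}]_{t-k}$, the vanishing of that cokernel is equivalent to $\times \ell^{t-2k}$ being surjective, and hence bijective, in the prescribed degrees. Combining the two equivalences shows that $B$ has the weak Lefschetz property exactly when the $k$-th bijection in the strong Stanley property holds, and ranging over $0 \leq k \leq \flfr{t}{2}$ completes the argument. I expect the only real obstacle to be the bookkeeping in the variable-elimination step---keeping the socle-degree shift $s' + 1 = t - k$ aligned with the source and target degrees $k$ and $t - k$ of the power map---together with a brief check of the degenerate endpoint $e = 0$ (occurring when $t$ is even and $k = \flfr{t}{2}$), where the power map is the identity and the corresponding extension is trivial.
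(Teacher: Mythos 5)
Your proposal is correct and follows essentially the same route as the paper: compute the (odd) socle degree $2(t-k)-1$ of the extension, apply Lemma~\ref{lem:wlp-odd} to identify its weak Lefschetz property with the vanishing of the cokernel of $\times\ell^{t-2k}\colon [R/I_{\tup{d}}]_k \to [R/I_{\tup{d}}]_{t-k}$, and then invoke Remark~\ref{rem:slp-is-ssp}. You simply make explicit two steps the paper leaves implicit (the elimination of $x_{n+1}$ and the use of the symmetric Hilbert function to pass from surjectivity to bijectivity), which is fine.
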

\begin{proof}
    Set $\ell := x_0 + \cdots x_n$.  The socle degree of $A_k := R[x_{n+1}]/I_{(\tup{d}, t-2k)}$ is $2(t-k)-1$, which is odd, and further 
    $\flfr{2(t-k)-1}{2} = t-k-1$.  By Lemma~\ref{lem:wlp-odd}, $A_k$ has the weak Lefschetz property if and only if the $K$-dimension of 
    $[R/(I_{\tup{d}}, \ell^{t-2k})]_{t-k}$ is zero.  The latter is equivalent to the map 
    $\varphi_k := \times \ell^{t-2k}: [R/I_{\tup{d}}]_{k} \rightarrow [R/I_{\tup{d}}]_{t-k}$ being a bijection.

    By Remark~\ref{rem:slp-is-ssp}, $R/I_{\tup{d}}$ has the strong Lefschetz property if and only if it is the strong Stanley property, that is,
    if and only if $\varphi_k$ is a bijection (i.e., $A_k$ has the weak Lefschetz property) for $0 \leq k \leq \flfr{t}{2}$.
\end{proof}

\section{Bounding failure of the Lefschetz properties}\label{sec:bound}

Let $R/I_{\tup{d}}$ be an artinian monomial complete intersection as defined in Section~\ref{sec:est}, and let $t = d_0 + \cdots + d_n - (n+1)$ be
the socle degree of $R/I_{\tup{d}}$.  If $t$ is odd and $d_0 \leq \clfr{t}{2}$, then Lemma~\ref{lem:wlp-odd}(iv) holds if and only if the determinant
of $M_{\tup{d}}$, the associated matrix defined by the map, is non-zero modulo the characteristic of $K$.  We use this to 
describe the characteristics in which the Lefschetz properties may fail and to prove they are bounded linearly in the degrees of the generating monomials.

\subsection{A connection to weak compositions}

An ordered $n$-tuple $\tup{m} = (m_1, \ldots, m_n) \in \NN_0^{n}$ with $m_1 + \cdots + m_n = k$ is called a {\em weak composition} of $k$ into
$n$ parts.  Define the set $C(n, \tup{m}, k)$ to be the set of weak compositions $\tup{a}$ of $k$ into $n$ parts such that $\tup{a}$ is
component-wise bounded by $\tup{m}$.  For elements $\tup{a}, \tup{b} \in C(n, \tup{m}, k)$, define $\tup{a}! := a_1!\cdots a_n!$ and 
$\tup{b} - \tup{a} = (b_1 - a_1, \ldots, b_n - a_n)$.  Notice that if $\tup{a}$ is component-wise bounded by $\tup{b}$, then 
$\tup{b} - \tup{a} \in C(n, \tup{m}, k)$.  

Given an $n$-tuple $\tup{a} = (a_1, a_2, \ldots, a_n)$, we define $x^{\tup{a}} = x_1^{a_1} \cdots x_n^{a_n}$.
The matrix $M_{\tup{d}}$ has rows indexed by the monomials $x^{\tup{a}}$ of $[S/J_{\tup{d}}]_{s+1-d_0}$ and columns indexed by the monomials
$x^{\tup{b}}$ of $[S/J_{\tup{d}}]_{s+1}$.  The element in the $x^{\tup{a}}$ row and the $x^{\tup{b}}$ column is zero if $\tup{a}$ is larger than $\tup{b}$ 
in at least one component, otherwise it is the multinomial coefficient
\[
    \binom{d_0}{b_1 - a_1, \ldots, b_n - a_n} = \frac{d_0!}{(\tup{b} - \tup{a})!}.
\]

Notice that the monomials in $[S/J_{\tup{d}}]_i$ are in bijection with the weak compositions in $C(n, \tupc{d} - \tup{1}, i)$, where
$\tupc{d} = (d_1, \ldots, d_n)$ and $\tup{1} = (1, \ldots, 1)$.  Hence the matrix $M_{\tup{d}}$ can be seen as a matrix with rows indexed
by $\tup{a} \in C(n, \tupc{d} - \tup{1}, s+1-d_0)$ and columns indexed by $\tup{b} \in C(n, \tupc{d} - \tup{1}, s+1)$ with entries
given by zero if $\tup{a}$ is larger than $\tup{b}$ in at least one component and $\frac{d_0!}{(\tup{b} - \tup{a})!}$ otherwise.

Seeing $M_{\tup{d}}$ in this new light, a theorem of Proctor computes the determinant of $M_{\tup{d}}$ in terms of compositions.

\begin{theorem}{\cite[Corollary~1]{Pr}} \label{thm:det}
    Let $\tup{d} \in \NN^{n+1}$, where $d_0 \geq d_1 \geq \cdots \geq d_n \geq 2$, and suppose $d_0+\cdots+d_n-(n+1)$ is odd.
    Set $s := \flfr{d_0+d_1+\cdots+d_n-(n+1)}{2}$.  Then
    \[
        |\det{M_{\tup{d}}}| = 
            \frac{\prod_{\tup{a}} \tup{a}!}{\prod_{\tup{b}} \tup{b}!}
            \prod_{i = 0}^{s+1-d_0} \rf{i+1}{d_0}^{\delta_{s+1-d_0-h}},
    \]
    where $\tup{a}$ and $\tup{b}$ run over $C(n, \tupc{d} - \tup{1}, s+1-d_0)$ and $C(n, \tupc{d} - \tup{1}, s+1)$, respectively,
    $\rf{x}{m} := x(x+1)\cdots(x+m-1)$, and $\delta_i = \#C(n, \tupc{d} - \tup{1}, i) - \#C(n, \tupc{d} - \tup{1}, i-1)$.
\end{theorem}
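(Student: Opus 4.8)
The plan is to recognize $M_{\tup{d}}$ as the matrix of a single natural map and to evaluate its determinant by decomposing an associated representation of $\mathfrak{sl}_2$. Since the entries are the integer multinomial coefficients $\frac{d_0!}{(\tup{b}-\tup{a})!}$, the determinant is an integer and may be computed over $\mathbb{Q}$ (or $\mathbb{C}$), which is precisely the setting in which its prime divisors are later extracted. First I would expand $(x_1+\cdots+x_n)^{d_0} = \sum_{|\tup{c}|=d_0} \frac{d_0!}{\tup{c}!}\,x^{\tup{c}}$ to see that $M_{\tup{d}}$ is exactly the matrix, in the monomial basis, of multiplication by $\ell^{d_0}$ with $\ell = x_1+\cdots+x_n$, regarded as a map
\[
    \times \ell^{d_0} \colon [A]_{s+1-d_0} \longrightarrow [A]_{s+1}, \qquad A = S/(x_1^{d_1}, \ldots, x_n^{d_n}),
\]
where $S = K[x_1,\ldots,x_n]$. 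The algebra $A$ is Gorenstein with socle degree $t' = d_1+\cdots+d_n-n = 2s+2-d_0$, and the two degrees $s+1-d_0$ and $s+1$ are symmetric about $t'/2$; hence $\dim_K[A]_{s+1-d_0} = \dim_K[A]_{s+1}$ and $M_{\tup{d}}$ is square, as required.

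Next I would place on $A$ the $\mathfrak{sl}_2$-structure of Watanabe. Each factor $K[x_j]/(x_j^{d_j})$ is the irreducible module $V_{d_j-1}$ (here $V_m$ denotes the $(m+1)$-dimensional irreducible) with $e$ acting as multiplication by $x_j$, and under $A \cong \bigotimes_{j=1}^n V_{d_j-1}$ the total raising operator $e = \sum_j(\times x_j)$ is exactly $\times \ell$. By Clebsch--Gordan, $A \cong \bigoplus_\lambda V_\lambda^{\oplus c_\lambda}$, and because $\times \ell^{d_0} = e^{d_0}$ preserves each summand, the determinant factors over the summands. On a summand of minimal degree $j$ (highest weight $m = t'-2j$), $e^{d_0}$ is an isomorphism of one-dimensional weight spaces whenever both degrees lie in the summand, i.e.\ for $j \leq s+1-d_0$, and, in the weight basis $v_0,\ldots,v_m$ normalized by $e\,v_k = (k+1)v_{k+1}$, it acts from index $i := s+1-d_0-j$ to index $i+d_0$ by the product of $d_0$ consecutive integers $(i+1)(i+2)\cdots(i+d_0) = \rf{i+1}{d_0}$. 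The number of summands of minimal degree $j$ is the number of new highest-weight vectors in degree $j$, which by symmetry and unimodality of the Hilbert function of $A$ equals $\delta_j = \#C(n,\tupc{d}-\tup{1},j) - \#C(n,\tupc{d}-\tup{1},j-1)$. Collecting these scalars with their multiplicities gives $\prod_i \rf{i+1}{d_0}^{\delta_{s+1-d_0-i}}$.

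It remains to account for the normalization. In the monomial basis the factorwise operators satisfy $e_j(x_j^{a_j}) = x_j^{a_j+1}$ with coefficient $1$, whereas the weight basis above is the divided-power normalization $v_k = \tfrac{1}{k!}e^k v_0$; passing between the two rescales the source and target degrees, and the corresponding degreewise change-of-basis determinants contribute the prefactor $\frac{\prod_{\tup{a}}\tup{a}!}{\prod_{\tup{b}}\tup{b}!}$, the higher target degree supplying the larger factorials in the denominator. Taking absolute values then yields the stated formula.

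The main obstacle is the pairing of the last two steps: exhibiting a single weight basis in which both the per-summand scalar is exactly $\rf{i+1}{d_0}$ and the degreewise transition determinant against the monomial basis is exactly $\prod_{\tup{c}\in C(n,\tupc{d}-\tup{1},i)}\tup{c}!$, so that the two contributions separate precisely as the ratio of factorials times the rising-factorial product; this requires controlling the triangular transition between the monomial and $\mathfrak{sl}_2$-adapted bases and checking the bookkeeping of multiplicities $\delta_j$ from the decomposition. I would also remark that, since $\frac{d_0!}{(\tup{b}-\tup{a})!}$ counts the monotone lattice paths of exactly $d_0$ unit coordinate steps from $\tup{a}$ to $\tup{b}$, one could instead attempt a Lindstr\"om--Gessel--Viennot evaluation through non-intersecting path families; but matching those families to the rising-factorial product appears no less delicate than the representation-theoretic accounting above.
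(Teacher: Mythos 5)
First, a point of comparison: the paper gives no proof of this statement at all --- it is imported verbatim as \cite[Corollary~1]{Pr} --- so there is no internal argument to measure your sketch against. What you have written is an attempt to reprove Proctor's determinant evaluation, and it follows essentially the same $\mathfrak{sl}_2$-theoretic route as that cited source. The parts you do carry out are correct: identifying $M_{\tup{d}}$ with the matrix of $\times\ell^{d_0}\colon [A]_{s+1-d_0}\to[A]_{s+1}$ for $A=S/(x_1^{d_1},\ldots,x_n^{d_n})$; the squareness from $(s+1-d_0)+(s+1)=d_1+\cdots+d_n-n=:t'$; the multiplicity count $\delta_j$ for summands with lowest degree $j$; the scalar $\rf{i+1}{d_0}$ on such a summand with $i=s+1-d_0-j$; and the diagonal rescaling $x^{\tup{a}}=\tup{a}!\,v_{\tup{a}}$ producing the prefactor $\prod_{\tup{a}}\tup{a}!/\prod_{\tup{b}}\tup{b}!$. (You have also silently, and correctly, read the exponent $\delta_{s+1-d_0-h}$ in the statement as $\delta_{s+1-d_0-i}$.)

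The obstacle you name at the end, however, is not a bookkeeping nuisance but the actual content of the theorem, and as written the argument does not get past it. After the rescaling, what remains is to show that the matrix $N$ of $e^{d_0}$ in the divided-power basis $\{v_{\tup{a}}\}$ has determinant $\prod_i\rf{i+1}{d_0}^{\delta_{s+1-d_0-i}}$. Passing to an $\mathfrak{sl}_2$-adapted basis multiplies $\det N$ by the ratio of two change-of-basis determinants, one in degree $s+1-d_0$ and one in degree $s+1$; these are transition matrices of two different weight spaces, they are not triangular with unit diagonal against $\{v_{\tup{a}}\}$ in any evident ordering, and nothing in your sketch forces their determinants to cancel. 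A standard way to close the gap is to avoid choosing an adapted basis altogether: compute the basis-independent determinant of the endomorphism $f^{d_0}e^{d_0}$ of $[A]_{s+1-d_0}$. On a summand of highest weight $m=t'-2j$ this composite acts by the scalar $\rf{i+1}{d_0}\,\rf{m-i-d_0+1}{d_0}$, and since the two weight spaces sit symmetrically about $t'/2$ one checks $m-i-d_0+1=i+1$, so $\det(f^{d_0}e^{d_0})=\prod_i\rf{i+1}{d_0}^{2\delta_{s+1-d_0-i}}$. The componentwise involution $a_j\mapsto d_j-1-a_j$ interchanges the matrices of $e$ and $f$ in the divided-power basis and swaps the two weight spaces, whence $|{\det}_v(f^{d_0})|=|{\det}_v(e^{d_0})|$ and the desired square root can be extracted. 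Without this (or an equivalent device), the representation-theoretic skeleton you set up does not yet yield the stated formula.
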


\begin{remark} \label{rem:nilps}
    By the work of Gessel and Viennot \cite{GV}, we have that the determinant of $M_{\tup{d}}$ is the enumeration
    of signed non-intersecting lattice paths from the hyperplane $x_1 + \cdots + x_n = s+1 - d_0$ to the hyperplane
    $x_1 + \cdots + x_n = s+1$ in the parallelepiped of size $(d_1-1) \times \cdots \times (d_n-1)$ .
\end{remark}

If the top generating degree, $d_0$, is as large as possible such that the preceding theorem is still applicable, then the matrix has one entry.

\begin{lemma} \label{lem:large-top}
    Let $n \geq 2$ and $d_1 \geq \cdots \geq d_n \geq 2$; set $d_0 = d_1 + \cdots + d_n - n$.  Then the algebra 
    $R/I_{(d_0, d_1, \ldots, d_n)}$ has the weak Lefschetz property if and only if $p$ does not divide 
    $\binom{d_0}{d_1-1, \ldots, d_n-1}$.
\end{lemma}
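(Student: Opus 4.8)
The plan is to exploit the fact that the hypothesis $d_0 = d_1 + \cdots + d_n - n$ forces the matrix $M_{\tup{d}}$ to degenerate to a single entry, so that the determinantal criterion for the weak Lefschetz property collapses into the stated divisibility condition. Concretely, the strategy is to check first that we are in the regime $t$ odd with $d_0 = \clfr{t}{2}$ where the criterion from the start of this section applies, namely that $R/I_{\tup{d}}$ has the weak Lefschetz property if and only if $\det M_{\tup{d}}$ is nonzero modulo $p$; then to read off the size and unique entry of $M_{\tup{d}}$ directly from the combinatorial description of its rows and columns.

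First I would compute the relevant degrees. With $d_0 = d_1 + \cdots + d_n - n$ the socle degree is $t = d_0 + (d_1 + \cdots + d_n) - (n+1) = 2d_0 - 1$, which is odd, so $s = \flfr{t}{2} = d_0 - 1$. In particular $s + 1 = d_0$, hence $s + 1 - d_0 = 0$, and moreover $d_0 = \clfr{t}{2}$, which is exactly the boundary case in which the determinantal criterion is available. (Note also that $d_0 = d_1 + \sum_{i \geq 2}(d_i - 1) > d_1$ since $n \geq 2$, so the assumed ordering $d_0 \geq d_1 \geq \cdots \geq d_n$ is respected.) Next I would identify the index sets of $M_{\tup{d}}$. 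The rows are indexed by $\tup{a} \in C(n, \tupc{d} - \tup{1}, s+1-d_0) = C(n, \tupc{d} - \tup{1}, 0)$, whose only member is $\tup{a} = (0, \ldots, 0)$; thus $M_{\tup{d}}$ has a single row.

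The decisive step, and the only point requiring genuine bookkeeping, is the claim that $M_{\tup{d}}$ likewise has a single column. The columns are indexed by the weak compositions $\tup{b} \in C(n, \tupc{d} - \tup{1}, s+1) = C(n, \tupc{d} - \tup{1}, d_0)$, that is, by tuples with $b_1 + \cdots + b_n = d_0$ and $b_i \leq d_i - 1$. Since $\sum_{i=1}^n (d_i - 1) = (d_1 + \cdots + d_n) - n = d_0$, the upper bounds are saturated, and the only tuple attaining the required sum is $\tup{b} = \tupc{d} - \tup{1} = (d_1 - 1, \ldots, d_n - 1)$. Hence $M_{\tup{d}}$ is a $1 \times 1$ matrix. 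Finally, as $\tup{a} = (0,\ldots,0)$ is component-wise bounded by this $\tup{b}$, its single entry is $\frac{d_0!}{(\tup{b} - \tup{a})!} = \frac{d_0!}{(d_1 - 1)! \cdots (d_n - 1)!} = \binom{d_0}{d_1-1, \ldots, d_n-1}$, so $\det M_{\tup{d}}$ equals this multinomial coefficient and the equivalence follows. As a cross-check one may instead substitute $s+1-d_0 = 0$ into Theorem~\ref{thm:det}: the numerator $\prod_{\tup{a}} \tup{a}!$ is $1$, the denominator is $(d_1-1)! \cdots (d_n-1)!$, and the surviving factor is $\rf{1}{d_0} = d_0!$, giving the same value. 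I expect no real difficulty beyond confirming that the saturated upper-bound constraint collapses the column set to one composition; the rest is a direct reading of the entry formula.
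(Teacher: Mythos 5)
Your proposal is correct and follows essentially the same route as the paper: compute $t = 2d_0-1$, observe that $s+1-d_0 = 0$ and that the saturated bound $\sum_{i=1}^n (d_i-1) = d_0$ forces $M_{\tup{d}}$ to be the $1\times 1$ matrix with entry $\binom{d_0}{d_1-1,\ldots,d_n-1}$, then apply the determinantal criterion. You simply spell out the row/column counting that the paper leaves implicit, which is a welcome addition rather than a deviation.
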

\begin{proof}
    The socle degree is $t = 2(d_1 + \cdots + d_n - n) - 1 = 2d_0 - 1$, and so the peak is $s = d_0$.  Thus, $M_{\tup{d}}$ is the
    $1 \times 1$ matrix with entry $\binom{d_0}{d_1-1, \ldots, d_n-1}$, and so $\det{M_{\tup{d}}} = \binom{d_0}{d_1-1, \ldots, d_n-1}$.
\end{proof}

\subsection{Bounding failure}

Using the above connection, and some algebraic considerations, we can bound the prime characteristics in which the weak Lefschetz property
can fail.  We first recall a useful lemma.

\begin{lemma}{\cite[Lemma~2.5]{CN-Enum}} \label{lem:wlp-p}
    Let $A = R/I$ be an artinian standard graded $K$-algebra with $I$ generated by monomials.  Suppose that $a$ is the least positive integer
    such that $x_i^a \in I$, for $1 \leq i \leq n$, and suppose that the Hilbert function of $R/I$ weakly increases to degree $s$.
    Then, for any positive prime $p$ such that $a \leq p^m \leq s$ for some positive integer $m$, $A$ fails to have the weak Lefschetz property
    in characteristic $p$.
\end{lemma}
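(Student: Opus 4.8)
The plan is to derive a contradiction from the assumption that $A$ has the weak Lefschetz property in characteristic $p$, by playing the Frobenius identity against the injectivity that the weak Lefschetz property would force in low degrees. Since $I$ is a monomial ideal, Proposition~\ref{pro:mono} lets me fix the candidate Lefschetz element to be $\ell = x_0 + \cdots + x_n$; so I assume this $\ell$ is a weak Lefschetz element and seek a contradiction.

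First I would extract injectivity from the hypotheses. Because the Hilbert function of $R/I$ weakly increases up to degree $s$, for every $j$ with $0 \le j \le s-1$ we have $\dim_K[A]_j \le \dim_K[A]_{j+1}$, so the maximal-rank condition on $\times \ell \colon [A]_j \to [A]_{j+1}$ forces this map to be injective. Fixing $m$ with $a \le p^m \le s$, every index $j$ in the range $0 \le j \le p^m - 1$ satisfies $j \le s-1$; composing the corresponding injective maps shows that $\times \ell^{p^m} \colon [A]_0 \to [A]_{p^m}$ is injective. On the other hand, the Frobenius identity in characteristic $p$ gives $\ell^{p^m} = x_0^{p^m} + \cdots + x_n^{p^m}$, and since $a \le p^m$ each variable satisfies $x_i^{p^m} = x_i^{p^m - a}\, x_i^a \in I$; hence $\ell^{p^m} = 0$ in $A$. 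Thus the composite $\times \ell^{p^m}$ annihilates the generator of $[A]_0 = K$, so it is not injective. This contradiction shows $\ell$ is not a weak Lefschetz element, and by Proposition~\ref{pro:mono} the algebra $A$ fails the weak Lefschetz property in characteristic $p$.

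The hard part is really the bookkeeping that aligns the two numerical constraints, and it is where each hypothesis earns its keep: the bound $p^m \le s$ together with the weakly increasing Hilbert function is exactly what guarantees that all $p^m$ of the intermediate multiplications sit inside the injective range, while the bound $a \le p^m$ is exactly what forces every $x_i^{p^m}$ into $I$ so that Frobenius collapses $\ell^{p^m}$ to zero. The conceptual point to get right is that these two windows overlap: the weak Lefschetz property predicts injectivity of $\times \ell^{p^m}$ on $[A]_0$ in precisely the degree range where, in characteristic $p$, the element $\ell^{p^m}$ has already become zero.
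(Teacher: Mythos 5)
Your argument is correct and is exactly the standard Frobenius argument behind the cited result: the paper itself quotes this lemma from \cite{CN-Enum} without reproving it, and your proof (weakly increasing Hilbert function forces $\times\ell$ injective in each degree below $s$, hence $\times\ell^{p^m}$ injective on $[A]_0$, while $\ell^{p^m}=\sum_i x_i^{p^m}=0$ in $A$ because $a\le p^m$) is the one used there and implicitly relied upon in Proposition~\ref{pro:wlp-naive}. The only cosmetic point is that the hypothesis should be read as covering all variables of $R$ (the index range ``$1\le i\le n$'' is inherited from the cited paper's conventions), which is how you correctly apply it.
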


\begin{proposition} \label{pro:wlp-naive}
    Let $n \geq 2$ and $d_0 \geq \cdots \geq d_n \geq 2$; set $t = d_0 + \cdots + d_n - n$.  Suppose $K$ is a field of characteristic $p$,
    where $p$ is a positive prime, and suppose $d_0 \leq \clfr{t}{2}$.  Then:
    \begin{enumerate}
        \item If $d_1 \leq p \leq d_0$ or $d_0 \leq p^m \leq \clfr{t}{2}$, for some positive integer $m$,
            then $R/I_{\tup{d}}$ fails to have the weak Lefschetz property.  In particular, injectivity fails in degree $d_0$ or $p^m$, respectively.
        \item If $p > \clfr{t+1}{2}$, then $R/I_{\tup{d}}$ has the weak Lefschetz property.
    \end{enumerate}
\end{proposition}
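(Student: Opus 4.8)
The plan is to establish the failures in (i) by producing explicit nonzero elements of $R/I_{\tup d}$ that are killed by $\ell := x_0 + \cdots + x_n$ in a degree at which the Hilbert function is still weakly increasing, and to establish (ii) by bounding the prime divisors of $\det M_{\tup d}$. The second failure criterion of (i) is immediate from Lemma~\ref{lem:wlp-p}: the least integer $a$ with $x_i^a \in I_{\tup d}$ for every variable is $a = d_0$, and the Hilbert function of the complete intersection $R/I_{\tup d}$ is symmetric and unimodal, hence weakly increasing to degree $\clfr t2$. So if $d_0 \le p^m \le \clfr t2$, the lemma gives the failure, with injectivity failing in degree $p^m$.

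The first criterion, $d_1 \le p \le d_0$, lies outside the scope of Lemma~\ref{lem:wlp-p}, since one may have $p < d_0 = a$, so I would argue directly. In characteristic $p$ Frobenius gives $\ell^p = x_0^p + \cdots + x_n^p$, and since $d_i \le d_1 \le p$ for each $i \ge 1$ we have $x_i^p \in I_{\tup d}$; hence $\ell^p \equiv x_0^p \pmod{I_{\tup d}}$. Because $p \le d_0$, the element $w := \ell^{p-1} x_0^{d_0 - p}$ has degree $d_0 - 1$ and satisfies $\ell w = \ell^p x_0^{d_0-p} \equiv x_0^{d_0} \equiv 0$. Expanding $w$, the monomial $x_0^{d_0 - 1}$ appears with coefficient $\binom{p-1}{p-1,0,\ldots,0} = 1$ and is not in $I_{\tup d}$, so $w \ne 0$ in $R/I_{\tup d}$. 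Since $d_0 \le \clfr t2$, the Hilbert function weakly increases from degree $d_0 - 1$ to $d_0$, so the nonzero kernel element $w$ shows $\times \ell$ is not injective, hence not of maximal rank, in degree $d_0$; by Proposition~\ref{pro:mono} the weak Lefschetz property fails.

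For (ii) I would first dispose of $d_0 > \clfr t2$ using Proposition~\ref{pro:large-degree} and then assume $d_0 \le \clfr t2$. When $t$ is odd, Lemma~\ref{lem:wlp-odd}(iv) makes the weak Lefschetz property equivalent to $\det M_{\tup d} \not\equiv 0 \pmod p$, and Proctor's formula (Theorem~\ref{thm:det}) writes $\det M_{\tup d}$ as a ratio of products of factorials $\tup a!$ and $\tup b!$, whose parts are bounded by $d_0 - 1$, times rising factorials $\rf{i+1}{d_0}$ for $0 \le i \le s+1-d_0$, whose factors are consecutive integers at most $(s+1-d_0)+d_0 = s+1 = \clfr{t+1}{2}$. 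A prime $p > \clfr{t+1}{2}$ therefore divides none of these factors, so its valuation in $\det M_{\tup d}$ is $0$ and the property holds. When $t$ is even I would replace $R/I_{\tup d}$ by $R[x_{n+1}]/I_{(\tup d, 2)}$, which has odd socle degree $t+1$ and unchanged top degree $d_0$; since $\clfr{t+2}{2} = \clfr{t+1}{2}$, the odd case applies, and Corollary~\ref{cor:wlp-even} carries the property back to $R/I_{\tup d}$.

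The crux is the prime-divisor estimate in (ii): once Proctor's formula is in hand, the whole argument reduces to the observation that every integer factor occurring in $\det M_{\tup d}$ is at most $s + 1 = \clfr{t+1}{2}$, together with the parity bookkeeping that reduces the even-socle-degree case to the odd one, where Theorem~\ref{thm:det} is available. The explicit annihilator $w$ in the first part of (i) is the other point requiring a little care, precisely because the range $d_1 \le p \le d_0$ is the one not covered by Lemma~\ref{lem:wlp-p}.
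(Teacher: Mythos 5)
Your proof is correct, and for the second criterion of (i) and for all of (ii) it is essentially the paper's own argument: Lemma~\ref{lem:wlp-p} with $a = d_0$ and $s = \clfr{t}{2}$ for the $p^m$ criterion, then (for $t$ odd) Proctor's formula to see that every integer factor occurring in $\det M_{\tup{d}}$ is at most $s+1 = \clfr{t+1}{2}$, and the reduction of the even case to the odd case by appending $x_{n+1}^2$ and invoking Corollary~\ref{cor:wlp-even}. Where you genuinely diverge is the first criterion $d_1 \leq p \leq d_0$. The paper's proof of that step asserts that $\ell^{d_0}$ is zero in $R/I_{\tup{d}}$ because every multinomial coefficient of $\ell^{d_0}$ other than those on the pure powers $x_i^{d_0}$ vanishes modulo $p$; this assertion does not hold as stated (for $\tup{d} = (4,3,3)$ and $p = 3$ the term $x_0^3x_1$ survives in $\ell^4$ with coefficient $\binom{4}{3,1,0} = 4 \equiv 1 \pmod{3}$ and lies outside $I_{\tup{d}}$), so the paper's justification is at best incomplete. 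Your replacement --- using Frobenius to get $\ell^{p} \equiv x_0^{p} \pmod{I_{\tup{d}}}$ and exhibiting the explicit degree-$(d_0-1)$ kernel element $w = \ell^{p-1}x_0^{d_0-p}$, which is nonzero because the monomial $x_0^{d_0-1}$ occurs in it with coefficient $1$ and avoids the monomial ideal --- is airtight, reaches the same conclusion that injectivity of $\times\ell$ into degree $d_0$ fails, and correctly uses the weak increase of the Hilbert function up to $\clfr{t}{2}$ to convert failure of injectivity into failure of maximal rank. In short, your route coincides with the paper's except at the one step the paper glosses over, where your explicit annihilator supplies a proof that actually works.
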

\begin{proof}
    Set $\ell := x_0 + \cdots + x_n$ and $\varphi_k := \times \ell: [R/I_{\tup{d}}]_{k-1} \rightarrow [R/I_{\tup{d}}]_{k}$.

    Assume $d_1 \leq p \leq d_0$.  Then $\ell^{d_0}$ is zero in $R/I_{\tup{d}}$ as the coefficients $\binom{d_0}{i_1, \ldots, i_n}$ are
    zero modulo $p$ except for on $x_i^{d_0}$, $0 \leq i \leq n$, but these are in $I_{\tup{d}}$.  Hence $\varphi_{d_0}$ is not injective
    and so $R/I_{\tup{d}}$ fails to have the weak Lefschetz property.

    Next, assume $d_0 \leq p^m \leq \clfr{t}{2}$, for some positive integer $m$.  Then $\varphi_{p^m}$ is not injective and $R/I_{\tup{d}}$ fails to
    have the weak Lefschetz property by Lemma~\ref{lem:wlp-p}.  (Recall that $x_i^{d_0} \in I_{\tup{d}}$ for $0 \leq i \leq n$ and the Hilbert
    function of $R/I_{\tup{d}}$ weakly increases to $t - \flfr{t}{2} = \clfr{t}{2}$.)

    Finally, assume $p > \clfr{t+1}{2}$.  We consider the two cases given by the parity of $t$.

    Suppose $t$ is odd; then $\clfr{t+1}{2} = \clfr{t}{2}$.  Moreover, analysing Theorem~\ref{thm:det} we see that the terms in the formula are bounded 
    between $1$ and $\clfr{t+1}{2}$.  Thus, $\det{M_{\tup{d}}}$ is not divisible by primes $p > \clfr{t+1}{2}$, and $R/I_{\tup{d}}$ has the weak Lefschetz
    property if $p > \clfr{t+1}{2}$.  

    Suppose $t$ is even; then $\clfr{t+1}{2} = \frac{t}{2}+1 = \clfr{t+2}{2}$.  By the previous paragraph, $R[x_{n+1}]/I_{\tup{d},2}$ has the weak Lefschetz 
    property for $p > \clfr{t+2}{2} = \clfr{t+1}{2}$.  Hence, by Corollary~\ref{cor:wlp-even} $R/I_{\tup{d}}$ has the weak Lefschetz property
    if $p > \clfr{t+1}{2}$.
\end{proof}

Notice that the algebras in Proposition~\ref{pro:slp}, which we desire to show have the weak Lefschetz property, all have odd socle degree.  We
exploit this, along with the preceding proposition, to find a similar bound in the case of the strong Lefschetz property.

\begin{theorem} \label{thm:slp}
    Suppose $K$ is a field of characteristic $p$, where $p$ is a positive prime.  Then:
    \begin{enumerate}
        \item If $\max\{d_1, 2d_0 - t\} \leq p \leq d_0$ or $d_0 \leq p^m \leq t$, for some positive integer $m$,
            then $R/I_{\tup{d}}$ fails to have the strong Lefschetz property.
        \item If $p > t$, then $R/I_{\tup{d}}$ has the strong Lefschetz property.
    \end{enumerate}
\end{theorem}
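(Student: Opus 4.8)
The plan is to pass from the strong Lefschetz property to the weak Lefschetz property of a family of auxiliary algebras using Proposition~\ref{pro:slp}, and then to feed each of these into the weak-Lefschetz bounds of Propositions~\ref{pro:large-degree} and~\ref{pro:wlp-naive}. By Proposition~\ref{pro:mono} I may test only $\ell = x_0 + \cdots + x_n$. For $0 \le k \le \flfr{t}{2}$ set $A_k := R[x_{n+1}]/I_{(\tup{d}, t-2k)}$; as in the proof of Proposition~\ref{pro:slp} each $A_k$ has odd socle degree $t_k = 2(t-k)-1$, so that $\clfr{t_k}{2} = \clfr{t_k+1}{2} = t-k$, and $R/I_{\tup{d}}$ has the strong Lefschetz property exactly when every $A_k$ has the weak Lefschetz property. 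I prove (ii) by verifying the weak Lefschetz property for all $A_k$, and (i) by exhibiting a single failure.

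For (ii), fix $k$ and let $D_k := \max\{d_0, t-2k\}$ be the largest generating exponent of $A_k$. Since $t-2k \le t-k$, one has $D_k > \clfr{t_k}{2} = t-k$ precisely when $d_0 > t-k$, and in that case $A_k$ has the weak Lefschetz property in every characteristic by Proposition~\ref{pro:large-degree}. Otherwise $D_k \le \clfr{t_k}{2}$, and Proposition~\ref{pro:wlp-naive}(ii) gives the weak Lefschetz property whenever $p > \clfr{t_k+1}{2} = t-k$. As $t - k \le t$ for all $k \ge 0$, the hypothesis $p > t$ secures the weak Lefschetz property for every $A_k$ at once (the degenerate top value $k = \flfr{t}{2}$, where the appended generator has degree at most $1$ and $A_k$ is either zero or isomorphic to $R/I_{\tup{d}}$, is covered identically), and hence the strong Lefschetz property for $R/I_{\tup{d}}$.

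For (i) I handle the two ranges in turn. If $d_0 \le p^m \le t$, I argue directly on $R/I_{\tup{d}}$: in characteristic $p$ the Frobenius identity yields $\ell^{p^m} = x_0^{p^m} + \cdots + x_n^{p^m}$, and $p^m \ge d_0 \ge d_i$ forces every $x_i^{p^m} \in I_{\tup{d}}$, so $\ell^{p^m} = 0$. Thus $\times \ell^{p^m} : [R/I_{\tup{d}}]_0 \to [R/I_{\tup{d}}]_{p^m}$ is the zero map from a one-dimensional space to a nonzero space (nonzero because $p^m \le t$), so it lacks maximal rank and the strong Lefschetz property fails. If instead $\max\{d_1, 2d_0-t\} \le p \le d_0$, I take $k := \clfr{t-p}{2}$; then $0 \le k \le \flfr{t}{2}$, and $2k \ge t-p$ gives $t - 2k \le p \le d_0$, so the largest exponent of $A_k$ equals $d_0 \ge p$ while every remaining exponent, namely each $d_i$ with $i \ge 1$ and the appended $t-2k$, is at most $p$. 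The decisive observation is that $2d_0 - t \le p$ is equivalent to $\clfr{t-p}{2} \le t-d_0$, i.e.\ to $D_k = d_0 \le t-k = \clfr{t_k}{2}$, which is exactly the hypothesis needed to apply Proposition~\ref{pro:wlp-naive}(i) to $A_k$; its first failure range then shows $A_k$ lacks the weak Lefschetz property, so $R/I_{\tup{d}}$ lacks the strong Lefschetz property.

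I expect the second range of part (i) to be the crux. The first range and part (ii) reduce to a Frobenius computation and a one-line dimension count, but the second range hinges on locating an auxiliary degree $k$ for which $t-2k$ and $d_1$ both fall weakly below $p$ while $d_0$ stays weakly above it and the hypothesis of Proposition~\ref{pro:wlp-naive} survives; translating ``there is room to place $t-2k$'' into the clean inequality $2d_0 - t \le p$ is the heart of the matter. The only loose end is the degenerate possibility $t - 2k = 1$, which can occur only when $p = 2$ and $t$ is odd; there the constraint $d_1 \le p$ forces $d_1 = \cdots = d_n = 2$ and $d_0 \le n+1$, so the interval $[d_0, t]$ necessarily contains a power of two and the failure is already delivered by the first range.
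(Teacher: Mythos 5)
Your proof is correct, and its skeleton---reduce via Proposition~\ref{pro:slp} to the weak Lefschetz property of the algebras $A_k = R[x_{n+1}]/I_{(\tup{d},\,t-2k)}$ and then feed these into Proposition~\ref{pro:wlp-naive}---is exactly the paper's. You diverge in two places, both to your advantage. For the range $d_0 \le p^m \le t$ the paper applies Proposition~\ref{pro:wlp-naive}(i) to each $A_k$ with $0 \le k \le \min\{\flfr{t}{2}, t-d_0\}$ and observes that the union of the failure intervals $\max\{t-2k,d_0\} \le p^m \le t-k$ covers $[d_0,t]$; your Frobenius computation $\ell^{p^m} = x_0^{p^m}+\cdots+x_n^{p^m} = 0$, which destroys maximal rank of $\times\ell^{p^m}\colon [R/I_{\tup{d}}]_0 \to [R/I_{\tup{d}}]_{p^m}$, gets the whole interval in one stroke without the auxiliary algebras. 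For the range $\max\{d_1, 2d_0-t\} \le p \le d_0$ the paper splits into $d_0 \le \clfr{t}{2}$ (where $R/I_{\tup{d}}$ itself already fails the weak Lefschetz property) and $d_0 > \clfr{t}{2}$ (where $A_{t-d_0}$ fails); your uniform choice $k = \clfr{t-p}{2}$ handles both cases at once, and your observation that $2d_0 - t \le p$ is exactly what places $d_0$ at or below $\clfr{t_k}{2}$ is the same arithmetic the paper performs implicitly via its case split. You are also more careful than the paper at two edge points it glosses over: the algebras $A_k$ whose appended exponent exceeds half their socle degree in part (ii), which you correctly dispatch with Proposition~\ref{pro:large-degree}, and the degenerate exponent $t-2k=1$ arising when $p=2$ and $t$ is odd, which you correctly fold back into the $p^m$ range.
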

\begin{proof}
    Set $\ell := x_0 + \cdots + x_n$ and $A_k := R[x_{n+1}]/I_{(\tup{d}, t-2k)}$.  Recall that by Proposition~\ref{pro:slp},
    $R/I_{\tup{d}}$ has the strong Lefschetz property if and only if each $A_k$, for $0 \leq k \leq \flfr{t}{2}$, has the
    weak Lefschetz property.  Set $r := \min\{\flfr{t}{2}, t-d_0\}$ and notice that the largest generating degree of $A_k$ is $\max\{d_0, t-2k\}$.
    Thus $A_k$ satisfies the hypotheses of Proposition~\ref{pro:wlp-naive} if and only if $0 \leq k \leq r$.

    Suppose $d_0 \leq \clfr{t}{2}$ and $\max\{d_1, 2d_0 - t\} \leq p \leq d_0$ or $d_0 \leq p^m \leq t$, for some positive integer $m$.
    Then $\max\{d_1, 2d_0 - t\} = d_1$, and by Proposition~\ref{pro:wlp-naive}(i) $R/I_{\tup{d}}$ fails to have the weak Lefschetz property,
    hence fails to have the strong Lefschetz property.

    Suppose $d_0 > \clfr{t}{2}$ and $\max\{d_1, 2d_0 - t\} \leq p \leq d_0$.  We then have that $0 < t - d_0 < \flfr{t}{2}$ and $A_{t-d_0}$ 
    fails to have the weak Lefschetz property by Proposition~\ref{pro:wlp-naive}(i).

    Let $0 \leq k \leq r$.  Then by Proposition~\ref{pro:wlp-naive}(i), $A_k$ fails to have the weak Lefschetz property if
    $\max\{t-2k, d_0\} \leq p^m \leq t-k$, for some positive integer $m$.  Hence ranging $k$ from $0$ to $r$ we get that $R/I_{\tup{d}}$
    fails to have the weak Lefschetz property for $d_0 \leq p^m \leq t$.

    On the other hand, by Proposition~\ref{pro:wlp-naive}(ii), $A_k$ has the weak Lefschetz property for $p > t-k$.  Hence if $p > t$, then
    each $A_k$ has the weak Lefschetz property and $R/I_{\tup{d}}$ has the strong Lefschetz property.
\end{proof}

Case (ii) of the preceding theorem can be recovered with some work from results of Lindsey \cite[Lemma~5.2 and Corollary~5.3]{Lind},
or Hara and Watanabe's proof of~\cite[Proposition 8]{HW}.

\section{The presence of the Lefschetz properties for two variables}\label{sec:two}

First, we note that any homogeneous artinian ideal in two variables has the weak Lefschetz property.  This was proven for characteristic zero 
in~\cite[Proposition~4.4]{HMNW} and then for arbitrary characteristic in~\cite[Corollary~7]{MZ}, though it was not specifically stated therein, as
noted in~\cite[Remark~2.6]{LZ}.  (See also~\cite[Proposition~2.7]{CN-Enum}.)

\begin{proposition} \label{pro:2-wlp}
    Let $R = K[x,y]$, where $K$ is an infinite field with {\em arbitrary} characteristic.  Every homogeneous artinian algebra in $R$ has the weak Lefschetz property.
\end{proposition}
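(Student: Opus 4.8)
The plan is to produce an explicit weak Lefschetz element: I will show that a general linear form $\ell$ works, by checking that $\times\ell : [A]_d \to [A]_{d+1}$ is injective for all small $d$ and surjective for all large $d$, with the two ranges meeting and leaving no gap. The organising data are $a$, the initial degree of $I$ (the least $d$ with $[I]_d \neq 0$; this is finite since $A$ is artinian, so $I \neq 0$), together with the one-variable quotient $B := A/\ell A = R/(I+(\ell))$.

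First I would record the structure of $B$. Since $\ell$ is a nonzero linear form, $R/(\ell) \cong K[x]$ as graded rings, so $B \cong K[x]/\overline{I}$, where $\overline{I}$ is the image of $I$; as $K[x]$ is a principal ideal domain and $A$ is artinian, $\overline{I} = (x^c)$ for some integer $c \geq 1$. Hence $\dim_K[B]_d = 1$ for $d < c$ and $\dim_K[B]_d = 0$ for $d \geq c$. Because $[B]_{d+1}$ is exactly the cokernel of $\times\ell : [A]_d \to [A]_{d+1}$, this already gives surjectivity, and hence maximal rank, precisely when $d \geq c-1$.

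Next I would dispose of injectivity in low degrees, which is essentially free: for $d \leq a-2$ both $[A]_d = [R]_d$ and $[A]_{d+1} = [R]_{d+1}$, since $I$ has no forms of degree below $a$; thus $\times\ell$ is the restriction of multiplication by the nonzero element $\ell$ on the domain $R$, so it is injective and has maximal rank. It then remains only to push surjectivity down to the single degree $d = a-1$, which closes the gap between the two ranges. The crux — and the only place generality of $\ell$ is used — is the claim $c \leq a$, equivalently that $\times\ell : [A]_{a-1} \to [A]_a$ is surjective. To see this I would fix a nonzero $f \in [I]_a$ and note that $\ell R_{a-1}$ has codimension one in $R_a$ (multiplication by $\ell$ being injective on $R$), so surjectivity is equivalent to $[I]_a \not\subseteq \ell R_{a-1}$, i.e. to $\ell \nmid f$. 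A nonzero binary form of degree $a$ is divisible by only finitely many linear forms up to scalar, and $K$ is infinite, so a general $\ell$ satisfies $\ell \nmid f$; this forces $[B]_a = 0$, whence $c \leq a$.

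Combining the pieces, for a general $\ell$ the map $\times\ell : [A]_d \to [A]_{d+1}$ is injective when $d \leq a-2$ and surjective when $d \geq a-1 \geq c-1$, and every degree lies in one of these two ranges. Thus $\ell$ is a weak Lefschetz element and $A$ has the weak Lefschetz property. The main, and really the only, obstacle is the surjectivity at the lone degree $d = a-1$; everything else is bookkeeping with the Hilbert functions of $A$ and $B$, and that single step is exactly where the infinitude of $K$ and the generality of $\ell$ enter.
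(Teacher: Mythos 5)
Your proof is correct, and it is worth noting that the paper itself offers \emph{no} proof of this proposition: it is stated as a citation to the literature (\cite[Proposition~4.4]{HMNW} for characteristic zero, \cite[Corollary~7]{MZ} and \cite[Remark~2.6]{LZ} for arbitrary characteristic, and \cite[Proposition~2.7]{CN-Enum}). So your argument is not ``the same route'' or ``a different route'' so much as a self-contained replacement for an external reference. The argument you give is the standard elementary one and it checks out: for a nonzero $\ell$ the restriction $R/(I+(\ell))\cong K[x]/(x^c)$ is forced by gradedness and the PID structure of $K[x]$, which handles surjectivity in degrees $d\geq c-1$; injectivity in degrees $d\leq a-2$ (with $a$ the initial degree of $I$) is automatic because $\times\ell$ is there just multiplication in the domain $R$; and the single genuine input --- that $c\leq a$ for general $\ell$ --- follows because a fixed nonzero form $f\in[I]_a$ has only finitely many linear factors up to scalar in the UFD $K[x,y]$, while the infinite field $K$ supplies infinitely many linear forms up to scalar. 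Both injectivity and surjectivity separately imply maximal rank, and the two ranges $d\leq a-2$ and $d\geq a-1$ cover all degrees, so there is no gap. Two minor presentational points: the phrase ``i.e.\ to $\ell\nmid f$'' is a one-directional implication (non-divisibility of the chosen $f$ suffices for $[I]_a\not\subseteq\ell R_{a-1}$, which is all you use), and you could remark explicitly that what you prove is slightly stronger than what general-position arguments give, namely that the genericity condition on $\ell$ is a single explicit open condition determined by one form of minimal degree in $I$. Neither point affects correctness.
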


On the other hand, the strong Lefschetz property is much more subtle.  By Proposition~\ref{pro:slp}, $R/I_{(a,b)}$ has the strong Lefschetz property if and only if
$B_k = R[x_2]/I_{(a,b,a+b-2-2k)}$ has the weak Lefschetz property for $0 \leq k \leq \flfr{a+b-2}{2}$.  In this case, if $k > b - 2$, then $B_k$ always has the weak 
Lefschetz property, by Proposition~\ref{pro:large-degree}; hence we need only to consider $0 \leq k \leq b-2$.

A few particular cases stand out.  Using Lemma~\ref{lem:large-top}, we have that the algebra $B_0$ has the weak Lefschetz property in characteristic $p$ if and only if $p$ 
does not divide $\binom{a+b-2}{b-1}$.  Similarly, $B_{b-2}$ has the weak Lefschetz property in characteristic $p$ if and only if $p$ does not divide $\binom{a}{b-1}$.

For $2 \leq b \leq 3$, we characterise the strong Lefschetz property with the above.  We single out these cases because they play a special role
in the classification of the strong Lefschetz property in characteristic two for arbitrary $R/I_{\tup{d}}$ given in Section~\ref{sec:conclusions}.
\begin{lemma} \label{lem:2-slp-small-b}
    Let $R = K[x,y]$ and $p$ be the characteristic of $K$.  Then:
    \begin{enumerate}
        \item $R/I_{(a,2)}$, for $a \geq 2$, has the strong Lefschetz property if and only if $p$ does not divide $a$.
        \item $R/I_{(a,3)}$, for $a \geq 3$, has the strong Lefschetz property if and only if $p = 2$ and $a\equiv 2 \pmod{4}$ or 
            $p \neq 2$ and $a$ is not equivalent to $-1$, $0$, or $1$ modulo $p$
    \end{enumerate}
\end{lemma}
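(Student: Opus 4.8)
The plan is to reduce everything to the weak Lefschetz property of three-variable algebras via Proposition~\ref{pro:slp}, and then to read off the two nontrivial cases from the discussion immediately preceding the lemma. By Proposition~\ref{pro:slp}, $R/I_{(a,b)}$ has the strong Lefschetz property if and only if each $B_k = R[x_2]/I_{(a,b,a+b-2-2k)}$ has the weak Lefschetz property for $0 \le k \le \flfr{a+b-2}{2}$. As already noted, Proposition~\ref{pro:large-degree} disposes of all $k > b-2$, so only $0 \le k \le b-2$ remain; and Lemma~\ref{lem:large-top} evaluates the two extreme indices, giving that $B_0$ has the weak Lefschetz property exactly when $p \nmid \binom{a+b-2}{b-1}$ and that $B_{b-2}$ has it exactly when $p \nmid \binom{a}{b-1}$.

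For part~(i), where $b = 2$, the only surviving index is $k = 0$, and $B_0$ has the weak Lefschetz property if and only if $p \nmid \binom{a}{1} = a$. This is precisely the claimed criterion.

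For part~(ii), where $b = 3$, the surviving indices are $k = 0$ and $k = 1 = b-2$, which are exactly the two extremes handled by Lemma~\ref{lem:large-top}. Thus $R/I_{(a,3)}$ has the strong Lefschetz property if and only if $p$ divides neither $\binom{a+1}{2} = \frac{(a+1)a}{2}$ nor $\binom{a}{2} = \frac{a(a-1)}{2}$. What remains is purely arithmetic: translating this joint non-divisibility into the stated congruences. When $p$ is odd one has $p \nmid 2$, so the condition becomes $p \nmid (a+1)a$ together with $p \nmid a(a-1)$, and these say exactly that $a$ is not congruent to $-1$, $0$, or $1$ modulo $p$, as required.

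The main obstacle is the case $p = 2$, where clearing the denominators is no longer free and one must instead compute the parity of the two binomial coefficients directly. I expect this to come down to a short case analysis on $a \bmod 4$ (or an application of Lucas's theorem): one checks that $\binom{a+1}{2}$ is odd exactly when $a \equiv 1, 2 \pmod{4}$ and that $\binom{a}{2}$ is odd exactly when $a \equiv 2, 3 \pmod{4}$. Both parities are odd simultaneously if and only if $a \equiv 2 \pmod{4}$, which gives the $p = 2$ clause and completes the classification.
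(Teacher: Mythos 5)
Your proposal is correct and follows essentially the same route as the paper: reduce via Proposition~\ref{pro:slp} to the algebras $B_k$, note that for $b=2,3$ the only relevant indices are the two extremes handled by Lemma~\ref{lem:large-top}, and then translate the non-divisibility of $\binom{a+1}{2}$ and $\binom{a}{2}$ into the stated congruences. The paper merely compresses the final arithmetic (writing the product as $\frac{1}{4}(a-1)a^2(a+1)$ and saying ``analysing this''), whereas you carry out the odd-$p$ and $p=2$ cases explicitly; the content is the same.
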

\begin{proof}
    By the comments above, $R/I_{(a,2)}$ has the strong Lefschetz property if and only if $p$ does not divide $\binom{a}{1} = a$.

    Similarly, $R/I_{(a,3)}$ has the strong Lefschetz property if and only if $p$ does not divide either $\binom{a+1}{2}$ or $\binom{a}{2}$.  That is,
    $p$ does not divide $\binom{a+1}{2}\binom{a}{2} = \frac{1}{4}(a-1)a^2(a+1)$.  Analysing this, we see that this is equivalent to the claim.
\end{proof}

\subsection{Syzygy gaps} \label{sub:syzygy}

Let $a \geq b$, and let $B_k = R[x_2]/I_{(a,b,a+b-2-2k)}$ for $0 \leq k \leq b-2$.  Notice that $a+b > a+b-2-2k$ as $k \geq 0$, and $b + (a+b-2-2k) > a$ as $b-2 \geq k$.
Thus, by \cite[Corollary~3.2]{Br}, $B_k$ has a stable syzygy bundle, and so by \cite[Theorem~2.2]{BK}, $B_k$ has the weak Lefschetz property if and only if the syzygy
bundle of $B_k$ splits on the line $x+y+z$ with twists, say, $s_0 \leq s_1$, such that $s_1 - s_0 \leq 1$, i.e., the {\em syzygy gap} (introduced by Monsky \cite{Mon}) of 
$(x^a, y^b, (x+y)^{a+b-2-2k})$ in $R$ is at most one.  Moreover, it is easy to see that $s_0 + s_1 = -2(a+b-1-k)$, and hence the parity of the syzygy gap $s_1 - s_0$ is even.

Han~\cite{Han} provides a way to compute the syzygy gap via a continuation of the syzygy gap function.  Define $\delta: \NN^3 \rightarrow \NN_0$ to be the syzygy gap of 
$(x^a, y^b, (x+y)^c)$ in $K[x,y]$ (notice $\delta$ depends on the characteristic of $K$).  Let $\delta^\star: [0,\infty)^3 \rightarrow [0, \infty)$ be the continuous
continuation of $\delta$.  Define $\ZZ^3_{\rm odd}$ to be the integer triples $(u,v,w)$ such that $u+v+w$ is odd.  Further, define $\mu$, the {\em Manhattan distance} 
on $\RR^3$, to be $\mu((a,b,c),(u,v,w)) = |u-a| + |v-b| + |w-c|$.

\begin{theorem}{\cite[Theorems~2.25 and 2.29]{Han}} \label{thm:Han}
    Let $K$ be an algebraically closed field of characteristic $p > 0$, and assume the entries $(a,b,c) \in [0,\infty)^3$ satisfy $a\leq b \leq c < a+b$.
    If there exists a negative integer $s$ and a triple $(u,v,w) \in \ZZ^3_{\rm odd}$ such that $\mu(p^s(a,b,c), (u,v,w)) < 1$, then $\delta^\star(a,b,c) > 0$.
    Otherwise, if no such $s$ and $(u,v,w)$ exist, then $\delta^\star(a,b,c) = 0$.
\end{theorem}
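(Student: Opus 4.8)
The plan is to reconstruct Han's argument from the structural properties of the syzygy-gap function. First I would record the features of $\delta$ and its continuation $\delta^\star$ that drive everything: $\delta^\star$ is the continuous, positively homogeneous of degree one, fully symmetric extension of $\delta$ from $\NN^3$; on integer triples it obeys the parity congruence $\delta(a,b,c) \equiv a+b+c \pmod 2$ (the two syzygy degrees of $(x^a,y^b,(x+y)^c)$ sum to $a+b+c$, so their difference $\delta$ has the same parity); and it is $1$-Lipschitz for the Manhattan metric, $|\delta^\star(P) - \delta^\star(Q)| \le \mu(P,Q)$. This Lipschitz bound is the key input for the forward implication; I would reduce it to the integer case, where changing a single exponent by one alters each syzygy degree by at most one, and then extend to $\delta^\star$ by continuity and homogeneity. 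I would also record the degenerate boundary values (when the triangle inequality fails, one generator becomes redundant and $\delta^\star$ is linear there), which serve as base cases for the descent below.

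\textbf{The forward implication} is then immediate. Suppose $s < 0$ and $(u,v,w) \in \ZZ^3_{\rm odd}$ satisfy $\mu(p^s(a,b,c),(u,v,w)) < 1$. Since this distance is under one and $p^s(a,b,c)$ has nonnegative coordinates, each of $u,v,w$ is a nonnegative integer, so $\delta^\star(u,v,w) = \delta(u,v,w)$ is a nonnegative integer of the parity of $u+v+w$, hence odd, hence at least $1$. The Lipschitz bound gives
\[
    \delta^\star\big(p^s(a,b,c)\big) \;\ge\; \delta^\star(u,v,w) - \mu\big(p^s(a,b,c),(u,v,w)\big) \;>\; 1 - 1 \;=\; 0,
\]
and degree-one homogeneity yields $\delta^\star(a,b,c) = p^{-s}\,\delta^\star(p^s(a,b,c)) > 0$.

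\textbf{The converse} is the substance of the theorem and the step I expect to be hardest. I would prove the contrapositive: if for every negative integer $s$ the point $p^s(a,b,c)$ lies at Manhattan distance at least one from every odd lattice point, then $\delta^\star(a,b,c) = 0$. The engine is the Frobenius self-similarity of $\delta^\star$: pushing forward along $x \mapsto x^p$ relates the syzygy bundle of the $p$th-power configuration to that of the original and, combined with homogeneity, pins down $\delta^\star$ on each scale in terms of a \emph{differently reduced} point at the next scale together with a local ``digit defect.'' I would set up a descent in $s$: reduce $(a,b,c)$ into the fundamental triangle by homogeneity and symmetry, iterate the inverse-Frobenius/digit reduction, and track the orbit $\{p^s(a,b,c)\}_{s<0}$. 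Since the avoidance hypothesis says this orbit misses the open unit $\mu$-balls about all odd lattice points, at each stage the local defect feeding into $\delta^\star$ vanishes; expressing $\delta^\star(a,b,c)$ as a convergent sum of these rescaled local defects then forces the value to be exactly zero.

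The crux — and the main obstacle — is exactly this descent: one must show that positivity of $\delta^\star$ is a purely local phenomenon, detected at some finite scale by proximity to an odd lattice point, with no positivity ``hidden'' across scales. Making the inverse-Frobenius recursion precise (identifying the finitely many digit branches, their affine contractions on the simplex, and the exact form of the local defect) and then controlling the error terms so that avoidance at \emph{all} scales genuinely squeezes the value to zero, rather than merely to something small, is the delicate combinatorial-geometric heart of the argument. Once the recursion and its base cases (the degenerate linear regions) are in hand, the two implications close up the stated equivalence.
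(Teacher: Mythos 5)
First, a point of comparison: the paper does not prove this statement at all --- it is imported verbatim from Han's thesis (Theorems~2.25 and~2.29) and used as a black box --- so there is no in-paper argument to measure your reconstruction against. Judged on its own terms, your forward implication is essentially right, conditional on three inputs you assert but do not establish: the taxicab $1$-Lipschitz property of $\delta$ (this is Monsky's syzygy-gap theorem and is itself a substantial result), the parity congruence $\delta(u,v,w) \equiv u+v+w \pmod 2$ (which follows from the two twists summing to $-(u+v+w)$), and the scaling identity $\delta^\star(pP) = p\,\delta^\star(P)$ needed both to define $\delta^\star$ off the integer lattice and to pass from $p^s(a,b,c)$ back to $(a,b,c)$. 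Granting those, the estimate $\delta^\star(p^s(a,b,c)) \geq \delta(u,v,w) - \mu(p^s(a,b,c),(u,v,w)) > 1 - 1 = 0$ is correct, including the observation that $(u,v,w)$ must have nonnegative entries so that $\delta$ is actually defined there.

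The genuine gap is the converse, which is the real content of Han's theorem and which you have only sketched. Your Lipschitz argument gives a one-way bound: proximity to an odd lattice point forces positivity, but nothing you have written rules out positivity arising away from all such points at every scale. Closing this requires Han's explicit recursive/digit-expansion formula for $\delta^\star$ --- the identification of the finitely many inverse-Frobenius branches on the fundamental simplex, the exact form of the local contribution at each scale, and the verification that these contributions vanish precisely when the orbit $\{p^s(a,b,c)\}_{s<0}$ avoids the open unit $\mu$-balls around $\ZZ^3_{\rm odd}$. You name this as ``the delicate combinatorial-geometric heart of the argument'' and leave it unexecuted, so the proposal establishes only one of the two implications; as a proof of the stated equivalence it is incomplete.
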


This approach was used by Brenner and Kaid in~\cite{BK-p} to classify the characteristics in which $K[x,y,z]/(x^d, y^d, z^d)$ has the weak Lefschetz property.

\subsection{Characteristic two}

Let $n$ be a positive integer, and $n = b_s 2^s + \cdots + b_0 2^0$ be its binary representation, i.e., $b_i \in \{0, 1\}$.  Define the {\em bit-positions}
of $n$ to be the set $\BP(n)$ of indices $i$ such that $b_i = 1$ in the binary representation of $n$.  For example, $\BP(42) = \{1,3,5\}$ and $\BP(2^m) = \{m\}$,
for $m \geq 0$.

The following theorem is due to Kummer~\cite{Ku}.  (We thank Fabrizio Zanello for pointing us to this reference.)

\begin{theorem} \label{thm:Kummer}
    If $n \geq k \geq 0$ and $p$ is a prime, then the largest power of $p$ dividing $\binom{n}{k}$ is
    the number carries that occur in the addition of $k$ and $n-k$ in base-$p$ arithmetic.
\end{theorem}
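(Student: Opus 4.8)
The plan is to reduce the statement to a clean identity between the $p$-adic valuation of a factorial and base-$p$ digit sums, and then to interpret that identity combinatorially in terms of carries. Write $v_p(m)$ for the exponent of the largest power of $p$ dividing $m$, and for a nonnegative integer $m$ let $s_p(m)$ denote the sum of the digits of $m$ in its base-$p$ expansion $m = \sum_j m_j p^j$. Since $\binom{n}{k} = n!/(k!\,(n-k)!)$, I first record that
\[
    v_p\!\binom{n}{k} = v_p(n!) - v_p(k!) - v_p((n-k)!).
\]

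The first main step is Legendre's formula $v_p(m!) = \sum_{i \geq 1} \lfloor m/p^i \rfloor$, which follows by counting, for each $i$, the multiples of $p^i$ among $1, \ldots, m$. Using $\lfloor m/p^i \rfloor = \sum_{j \geq i} m_j p^{j-i}$ and summing the resulting geometric series in $i$ gives the equivalent closed form $v_p(m!) = (m - s_p(m))/(p-1)$. Substituting this into the displayed difference, the linear terms $n - k - (n-k)$ cancel, leaving
\[
    v_p\!\binom{n}{k} = \frac{s_p(k) + s_p(n-k) - s_p(n)}{p-1}.
\]

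The second, and key, step is to identify the right-hand side with the number of carries in the base-$p$ addition $k + (n-k) = n$. Set $a = k$ and $b = n - k$, and let $\varepsilon_i \in \{0,1\}$ be the carry into digit position $i$, so that $\varepsilon_0 = 0$ and the column relations $a_i + b_i + \varepsilon_i = n_i + p\,\varepsilon_{i+1}$ hold for every $i$ (the bound $a_i + b_i + \varepsilon_i \leq 2p-1$ forces each $\varepsilon_{i+1} \in \{0,1\}$). Summing these relations over all $i \geq 0$ yields $s_p(a) + s_p(b) + \sum_{i\geq 0}\varepsilon_i = s_p(n) + p\sum_{i \geq 0}\varepsilon_{i+1}$; since $\varepsilon_0 = 0$, both carry sums equal $C := \sum_{i \geq 1}\varepsilon_i$, the total number of carries, and rearranging gives $s_p(a) + s_p(b) - s_p(n) = (p-1)\,C$. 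Comparing with the previous display shows $v_p\binom{n}{k} = C$, as desired.

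I expect the principal obstacle to be the clean justification of the carry identity $s_p(a) + s_p(b) - s_p(a+b) = (p-1)C$: one must verify that the telescoping of carry terms across columns is exact, i.e., that the outgoing carry of each column is the incoming carry of the next and that the initial carry $\varepsilon_0$ vanishes, so that no boundary terms survive. Everything else, namely Legendre's formula and the passage to the digit-sum form, is routine bookkeeping.
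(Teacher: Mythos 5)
Your proof is correct. Note, however, that the paper offers no proof of this statement at all: Theorem~\ref{thm:Kummer} is quoted as a classical result of Kummer with a citation to his 1852 paper, so there is no in-text argument to compare yours against. What you have written is the standard modern derivation --- Legendre's formula in the digit-sum form $v_p(m!) = (m - s_p(m))/(p-1)$, followed by the telescoping carry identity $s_p(k) + s_p(n-k) - s_p(n) = (p-1)C$ --- and both steps are carried out completely; the only point worth making explicit is that the column relations $a_i + b_i + \varepsilon_i = n_i + p\,\varepsilon_{i+1}$ involve only finitely many nonzero terms (all digits and carries vanish for $i$ large), so the summation over all $i$ is legitimate and no boundary term survives at the top end. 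With that remark your argument is a complete, self-contained proof of the theorem.
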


An immediate (and simple) corollary of this theorem is a classification for when binomial coefficients are odd.

\begin{corollary} \label{cor:Kummer}
    If $n \geq k \geq 0$, then $\binom{n}{k}$ is odd if and only if $\BP(k)$ and $\BP(n-k)$ are disjoint.
\end{corollary}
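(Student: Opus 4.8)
The plan is to derive the corollary directly from Kummer's theorem (Theorem~\ref{thm:Kummer}) specialised to $p = 2$. First I would observe that $\binom{n}{k}$ is odd precisely when it is \emph{not} divisible by $2$, i.e., when the largest power of $2$ dividing $\binom{n}{k}$ is $2^0$. By Theorem~\ref{thm:Kummer}, this largest power is $2^{c}$, where $c$ is the number of carries occurring in the addition of $k$ and $n-k$ in base-$2$ arithmetic. Hence $\binom{n}{k}$ is odd if and only if $c = 0$, that is, if and only if no carries occur when adding $k$ to $n-k$ in binary.

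The remaining step is to translate the ``no carries'' condition into the stated disjointness of bit-positions. I would argue that a carry occurs at position $i$ in the binary addition of $k$ and $n-k$ if and only if the partial sum at that position (the bit of $k$, the bit of $n-k$, and any incoming carry) is at least $2$. The key point is that if $\BP(k)$ and $\BP(n-k)$ are disjoint, then at every position at most one of the two summands has a $1$-bit; a straightforward induction on the position index $i$ shows that no carry is ever generated, since an incoming carry of $0$ together with at most one $1$-bit yields a partial sum of at most $1$. Conversely, if the bit-positions are not disjoint, let $i$ be the least index lying in both $\BP(k)$ and $\BP(n-k)$; at position $i$ both summands contribute a $1$, so the partial sum is at least $2$ and a carry is produced. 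Therefore carries occur if and only if $\BP(k)$ and $\BP(n-k)$ intersect, which combined with the previous paragraph gives the claim.

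The only delicate point, and the step I would expect to require the most care, is the converse direction: one must ensure that the \emph{first} shared bit-position already forces a carry regardless of carries propagating from lower positions. Choosing $i$ to be the \emph{least} common index handles this cleanly, because below position $i$ the positions are disjoint (by minimality of $i$), so by the forward induction no carry has yet been generated entering position $i$; thus the two $1$-bits at position $i$ alone produce the carry. With that observation the equivalence is immediate, and the proof is complete.
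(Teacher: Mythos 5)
Your proof is correct and follows exactly the route the paper intends: the paper states the corollary as an ``immediate'' consequence of Theorem~\ref{thm:Kummer}, and your argument simply fills in the (standard) translation between ``no carries in base-$2$ addition of $k$ and $n-k$'' and ``$\BP(k)\cap\BP(n-k)=\emptyset$,'' handling the converse carefully via the least shared bit-position.
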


Using the above classification, we get a useful intermediate result.

\begin{lemma} \label{lem:one-or-the-other-n-1}
    If $a \geq b \geq 2$, then $\binom{a+b-2}{b-1}$ is odd and $\binom{a}{b-1}$ is odd if and only if $a = 2^m \ell$ and $b = 2^m+1$, where $m \geq 0$ and $l \geq 3$ odd.
\end{lemma}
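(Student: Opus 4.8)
The plan is to reduce both oddness requirements to statements about binary digits via Corollary~\ref{cor:Kummer}, and then to read off the stated arithmetic form of $a$ and $b$ by comparing the bit-positions of $a$ and $a-1$. Writing $c := b-1 \geq 1$, Corollary~\ref{cor:Kummer} gives that $\binom{a+b-2}{b-1} = \binom{(a-1)+c}{c}$ is odd if and only if $\BP(c) \cap \BP(a-1) = \emptyset$, and that $\binom{a}{b-1} = \binom{a}{c}$ is odd if and only if $\BP(c) \cap \BP(a-c) = \emptyset$. Note $a \geq b$ gives $a > c \geq 1$, so $a-c \geq 1$ and both coefficients make sense. Thus the lemma asks exactly for a classification of those pairs $(a,c)$ satisfying both disjointness conditions.

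For the forward (``only if'') direction, I would first exploit the second condition: since $c$ and $a-c$ have disjoint bit-positions and sum to $a$, their addition produces no carries, so $\BP(a) = \BP(c) \cup \BP(a-c)$ is a disjoint union; in particular $\BP(c) \subseteq \BP(a)$. Next I would record the elementary description of $\BP(a-1)$ in terms of $\BP(a)$: if $m$ denotes the position of the lowest set bit of $a$, then subtracting $1$ clears bit $m$ and sets all bits below it, so $\BP(a-1) = \{0, 1, \ldots, m-1\} \cup (\BP(a) \setminus \{m\})$. Feeding this into the first condition, $\BP(c) \cap \BP(a-1) = \emptyset$ together with $\BP(c) \subseteq \BP(a)$ forces $\BP(c) \cap (\BP(a) \setminus \{m\}) = \emptyset$, hence $\BP(c) \subseteq \{m\}$. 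As $c \geq 1$ we get $\BP(c) = \{m\}$, i.e.\ $c = 2^m$ and $b = 2^m + 1$. Since $m$ is the lowest set bit of $a$, we may write $a = 2^m \ell$ with $\ell$ odd, and $a \geq b = 2^m + 1 > 2^m$ gives $\ell > 1$, so $\ell \geq 3$, as desired.

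The reverse (``if'') direction should be a direct verification. Given $b = 2^m+1$ (so $c = 2^m$ and $\BP(c) = \{m\}$) and $a = 2^m \ell$ with $\ell \geq 3$ odd, I would check the two disjointness conditions by locating bit $m$. For the second, $a - c = 2^m(\ell - 1)$ with $\ell - 1$ even and positive, so its lowest set bit lies above position $m$ and $m \notin \BP(a-c)$. For the first, $a$ has lowest set bit exactly at position $m$, so $\BP(a-1)$ has bit $m$ cleared and $m \notin \BP(a-1)$. Both intersections with $\{m\} = \BP(c)$ are therefore empty, so both binomial coefficients are odd.

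I expect the only real obstacle to be the forward direction, and specifically the idea of using the carry-free condition coming from $\binom{a}{b-1}$ to obtain the containment $\BP(c) \subseteq \BP(a)$ \emph{before} invoking the relationship between $\BP(a)$ and $\BP(a-1)$; once that order of operations is in place, the argument collapses to bookkeeping about the lowest set bit. A minor point to handle carefully is the boundary case $m = 0$ (odd $a$), where the set $\{0, \ldots, m-1\}$ is empty, but the displayed description of $\BP(a-1)$ remains correct and the conclusion is unaffected.
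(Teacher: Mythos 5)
Your proposal is correct and follows essentially the same route as the paper's own proof: both translate the two oddness conditions into bit-position disjointness via Corollary~\ref{cor:Kummer}, use the carry-free decomposition $\BP(a)=\BP(b-1)\,\dotcup\,\BP(a-b+1)$ to get $\BP(b-1)\subseteq\BP(a)$, and then compare $\BP(a)$ with $\BP(a-1)$ at the lowest set bit $m$ to force $\BP(b-1)=\{m\}$. The reverse direction is the same direct verification in both.
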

\begin{proof}
    Suppose $\binom{a+b-2}{b-1}$ and $\binom{a}{b-1}$ are odd.  Then by Corollary~\ref{cor:Kummer}, $\BP(a-1)$ and $\BP(b-1)$ are disjoint and as are $\BP(b-1)$ and $\BP(a-b+1)$.
    Thus, $\BP(a) = \BP(b-1) \dotcup \BP(a-b+1)$ has at least two elements, as $\BP(b-1)$ and $\BP(a-b+1)$ each have at least one element.

    Suppose $\BP(a-1)$ contains $0, \ldots, m-1$ but not $m$.  Then $\BP(a)$ contains $m$ but not $0, \ldots, m-1$.  Further, for $i > m$, $i \in \BP(a-1)$ if and only
    if $i \in \BP(a)$.  As $\BP(b-1) \subset \BP(a)$, and $\BP(b-1)$ has at least one element, then $\BP(b-1)$ and $\BP(a-1)$ being disjoint implies $\BP(b-1) = \{m\}$.  
    That is, $b - 1 = 2^m$ and so $b = 2^m+1$.  Moreover, as $\BP(a)$ contains $m$ but not $0, \ldots, m-1$, $a = 2^m \ell$, where $\ell$ is odd.  As $a \geq b$,
    then $a \geq 3$.  

    On the other hand, suppose $a = 2^m \ell$ and $b = 2^m +1$.  Then $\BP(a-1) = \{0, \ldots, m-1\} \dotcup \{ m+1+i \st i \in B(\ell)\}$, $B(b-1) = \{m\}$,
    and $\BP(a-b+1) = \BP(2^{m+1}\frac{\ell-1}{2})$ only contains values at least $m+1$.  Thus $\BP(a-1)$ and $\BP(b-1)$ are disjoint as are $\BP(b-1)$ and $\BP(a-b+1)$,
    and so by Corollary~\ref{cor:Kummer} we have that $\binom{a+b-2}{b-1}$ and $\binom{a}{b-1}$ are odd.
\end{proof}

Using the syzygy gap method described in Subsection~\ref{sub:syzygy}, we complete the classification in the exceptional cases.

\begin{lemma} \label{lem:n-1-special-wlp}
    If $a = 2^m \ell$ and $b = 2^m+1$, where $m \geq 0$ and $l \geq 3$ odd, then $R/I_{(a,b,a+b-2(k+1))}$ fails to have the weak Lefschetz property
    in characteristic two if and only if $1 \leq k \leq b-3$.
\end{lemma}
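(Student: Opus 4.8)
The plan is to convert the weak Lefschetz question into a statement about a syzygy gap and then feed a single, explicit witness into Han's criterion. Write $c := a+b-2(k+1)$, so that the algebra in question is the $B_k = R[x_2]/I_{(a,b,c)}$ of Subsection~\ref{sub:syzygy}. As recalled there, $B_k$ has the weak Lefschetz property exactly when the syzygy gap of $(x^a, y^b, (x+y)^c)$ is at most one; since that gap is always even, $B_k$ has the property if and only if $\delta(a,b,c)=0$, and fails it if and only if $\delta^\star(a,b,c)>0$. Thus the lemma is equivalent to the assertion that $\delta^\star(a,b,c)>0$ precisely for $1\le k\le b-3$.

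For the two endpoints I would bypass the syzygy gap entirely. When $k=0$ we have $c=a+b-2$, and Lemma~\ref{lem:large-top} gives that $B_0$ has the property if and only if $2\nmid\binom{a+b-2}{b-1}$; when $k=b-2$ we have $c=a-b+2$ and the property holds if and only if $2\nmid\binom{a}{b-1}$. Under $a=2^m\ell$, $b=2^m+1$, Lemma~\ref{lem:one-or-the-other-n-1} makes both coefficients odd, so both endpoints have the property. For $k\ge b-1$ the top degree $a$ dominates, so $a>\clfr{t}{2}$ and Proposition~\ref{pro:large-degree} applies. This settles every $k\notin\{1,\dots,b-3\}$ and gives the ``only if'' direction; note that the interior range is nonempty only when $m\ge 2$.

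The substance is the ``if'' direction. For $1\le k\le b-3$ (so $m\ge 2$) I would exhibit a single witness for Han's criterion, uniform in $k$: the scale $s=-m$ and the lattice point $(\ell,1,\ell)$. Here $2^{-m}(a,b,c)=\bigl(\ell,\;1+2^{-m},\;\ell+1-(2k+1)2^{-m}\bigr)$, so
\[
    \mu\bigl(2^{-m}(a,b,c),\,(\ell,1,\ell)\bigr)=2^{-m}+\bigl|(2k+1)2^{-m}-1\bigr|=2^{-m}\bigl(1+|2k+1-2^m|\bigr).
\]
Because $1\le k\le 2^m-2$, the quantity $|2k+1-2^m|$ is largest at the two ends of the range, where it equals $2^m-3$; hence the Manhattan distance is at most $2^{-m}(1+2^m-3)=1-2^{1-m}<1$. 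Since $(\ell,1,\ell)$ has coordinate sum $2\ell+1$ it lies in $\ZZ^3_{\rm odd}$, and $s=-m$ is negative as $m\ge 2$, so Han's theorem (applied over $\bar K$, which affects neither the property nor $\delta$) yields $\delta^\star(a,b,c)>0$ and $B_k$ fails the property.

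The main obstacle is recognising that one scale and one lattice point handle the entire interior range rather than tuning $s$ to each $k$; once that is seen, the estimate collapses to bounding $|2k+1-2^m|$, whose extreme values occur exactly for the $k$ adjacent to the two endpoints. A minor point to treat carefully is that Han's theorem is phrased for a sorted triple, so I would invoke the permutation symmetry of $\delta^\star$ (equivalently, a linear change of coordinates of $K[x,y]$) in order to test $(\ell,1,\ell)$ directly against $(a,b,c)$, after checking that the sorted triple obeys the strict triangle inequality $a'\le b'\le c'<a'+b'$ required by the hypotheses.
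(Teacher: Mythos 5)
Your proposal is correct and follows essentially the same route as the paper: the endpoints $k=0$ and $k=b-2$ are dispatched via Lemma~\ref{lem:large-top} and Lemma~\ref{lem:one-or-the-other-n-1}, and the interior range $1\le k\le b-3$ is handled by testing the single witness $(\ell,1,\ell)$ at scale $2^{-m}$ in Theorem~\ref{thm:Han}, with the same Manhattan-distance value $2^{-m}\bigl(1+|2k+1-2^m|\bigr)$ (you bound it by its values at the extremes of the range, whereas the paper splits into $k\le 2^{m-1}$ and $k>2^{m-1}$, but the estimates coincide). Your additional remarks --- covering $k\ge b-1$ via Proposition~\ref{pro:large-degree} and verifying the sorting and triangle-inequality hypotheses of Han's theorem --- are sound housekeeping that the paper leaves implicit.
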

\begin{proof}
    If $k=0$ or $k = b-2$, then $R/I_{(a,b,a+b-2)}$ or $R/I_{(a,b,a-b+2)}$ has the weak Lefschetz property by Lemmas~\ref{lem:large-top} and~\ref{lem:one-or-the-other-n-1}.

    Suppose $1 \leq k \leq b-3 = 2^m-2$.  We have the following:
    \begin{equation*}
        \begin{split}
            \mu\left( \frac{1}{2^m}(a,b,a+b-2-2k), (\ell, 1, \ell) \right) 
                            =& \left| \frac{2^m \ell}{2^m} - \ell \right| + \left| \frac{2^m + 1}{2^m} - 1 \right| \\
                             &+ \left| \frac{2^m(\ell+1)-1-2k}{2^m} - \ell \right| \\
                            =& \frac{1}{2^m} + \left| \frac{2k+1}{2^m} - 1 \right| \\
                            =& \left\{\begin{array}{ll}
                                    1-\frac{k}{2^{m-1}} & {\rm if } k \leq 2^{m-1}, \\
                                    \frac{k+1}{2^{m-1}} - 1 & {\rm if } k > 2^{m-1}. \\
                               \end{array}\right.
        \end{split}
    \end{equation*}
    Notice that $1 - \frac{k}{2^{m-1}} < 1$ if and only if $\frac{k}{2^{m-1}} > 0$ if and only if $k > 0$.  Further,
    $\frac{k+1}{2^{m-1}} - 1 < 1$ if and only if $k < 2^m-1$ if and only if $k \leq 2^m-2 = b-3$. 

    Thus, $\mu\left( \frac{1}{2^m}(a,b,a+b-2-2k), (\ell, 1, \ell) \right) < 1$ for $1 \leq k \leq b-3$.  Notice $2\ell + 1$ is odd.
    Hence, by Theorem~\ref{thm:Han}, $R/I_{(a,b,a+b-2(1+k))}$ fails to have the weak Lefschetz property in characteristic two.
\end{proof}

Combining the above two lemmas, we classify the strong Lefschetz property in characteristic two for the two-variable case.

\begin{corollary} \label{cor:char-2-n-1}
    Let $a \geq b \geq 2$.  Then $R/I_{(a,b)}$ fails to have the strong Lefschetz property in characteristic two if and only if one of the
    following hold:
    \begin{enumerate}
        \item $b = 2$ and $a$ is even,
        \item $b = 3$ and $a \not\equiv 2 \pmod{4}$, or
        \item $b \geq 4$.
    \end{enumerate}
\end{corollary}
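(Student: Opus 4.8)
The plan is to invoke the reduction from Proposition~\ref{pro:slp}, which says that $R/I_{(a,b)}$ has the strong Lefschetz property exactly when every member of the family $B_k = R[x_2]/I_{(a,b,a+b-2-2k)}$ has the weak Lefschetz property; by Proposition~\ref{pro:large-degree} only the indices $0 \le k \le b-2$ require attention. Since characteristic two is fixed throughout, the whole argument reduces to locating, for each pair $(a,b)$, a single index $k$ at which $B_k$ loses the weak Lefschetz property (which proves failure of the strong property), or else showing no such index exists. I would organise the verification by the value of $b$, matching the three clauses of the corollary.

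For $b = 2$ and $b = 3$ I would simply quote Lemma~\ref{lem:2-slp-small-b}. Part (i) there gives that $R/I_{(a,2)}$ has the strong Lefschetz property if and only if $2 \nmid a$, so it fails if and only if $a$ is even, which is clause (i). Part (ii), specialised to $p = 2$, gives that $R/I_{(a,3)}$ has the strong Lefschetz property if and only if $a \equiv 2 \pmod 4$, so it fails if and only if $a \not\equiv 2 \pmod 4$, which is clause (ii). Both implications of the biconditional are immediate here.

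The substance is the case $b \ge 4$, where the corollary asserts that the strong Lefschetz property always fails; here I would argue by a dichotomy governed by Lemma~\ref{lem:one-or-the-other-n-1}. The two endpoint algebras are controlled by Lemma~\ref{lem:large-top}: $B_0$ fails the weak Lefschetz property exactly when $\binom{a+b-2}{b-1}$ is even, and $B_{b-2}$ fails it exactly when $\binom{a}{b-1}$ is even. If $(a,b)$ is \emph{not} of the exceptional form $a = 2^m\ell$, $b = 2^m+1$ (with $m \ge 0$ and $\ell \ge 3$ odd), then Lemma~\ref{lem:one-or-the-other-n-1} forbids both binomials from being odd simultaneously, so at least one endpoint algebra fails the weak Lefschetz property and hence $R/I_{(a,b)}$ fails the strong Lefschetz property.

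The remaining, and most delicate, situation is the exceptional form itself, for which the endpoint tests are uninformative because both binomials are odd. Here the constraint $b = 2^m + 1 \ge 4$ forces $2^m \ge 3$, hence $m \ge 2$ and $b \ge 5$, so the interval $1 \le k \le b-3$ is nonempty. Lemma~\ref{lem:n-1-special-wlp}, obtained via the syzygy-gap computation of Subsection~\ref{sub:syzygy}, states precisely that $B_k$ (which is $R[x_2]/I_{(a,b,a+b-2(k+1))}$ after rewriting $a+b-2-2k = a+b-2(k+1)$) fails the weak Lefschetz property for every $k$ in this range; choosing any such interior $k$ produces the desired failure. I expect this exceptional family to be the main obstacle: it is engineered so that the easy endpoint criterion of Lemma~\ref{lem:large-top} detects nothing, and failure can only be exposed by the finer continuation-of-syzygy-gap argument. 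Verifying that the exceptional shape genuinely forces $b \ge 5$, so that an interior index is guaranteed to be available, is the key bookkeeping point that makes the three clauses partition the situation correctly.
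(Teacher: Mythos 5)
Your proposal is correct and follows essentially the same route as the paper: reduce via Proposition~\ref{pro:slp} to the family $B_k$, handle $b=2,3$ by Lemma~\ref{lem:2-slp-small-b}, dispose of non-exceptional pairs with $b\ge 4$ by applying Lemma~\ref{lem:large-top} together with Lemma~\ref{lem:one-or-the-other-n-1} at the endpoints $k=0$ and $k=b-2$, and kill the exceptional family $a=2^m\ell$, $b=2^m+1$ with an interior index via Lemma~\ref{lem:n-1-special-wlp}. Your extra observation that the exceptional shape forces $b\ge 5$ is a slight sharpening of the paper's remark that $b\ge 4$ gives $b-2\ge 2$, but both serve the same purpose of guaranteeing a nonempty range $1\le k\le b-3$.
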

\begin{proof}
    Parts (i) and (ii) follow from Lemma~\ref{lem:2-slp-small-b} (and also Lemma~\ref{lem:one-or-the-other-n-1} and Lemma~\ref{lem:n-1-special-wlp},
    after considering each case).

    Recall that by Proposition~\ref{pro:slp}, $R/I_{(a,b)}$ has the strong Lefschetz property if and only if each $B_k := S/I_{(a,b,a+b-2-2k)}$ has the weak Lefschetz 
    property, for $0 \leq k \leq b-2$.

    Suppose that $b \geq 4$.  If $a \neq 2^m \ell$ for some $m \geq 0$ or $b \neq 2^m+1$ for some $l \geq 3$ odd, then by Lemma~\ref{lem:one-or-the-other-n-1},
    $\binom{a+b-2}{b-1}$ is even or $\binom{a}{b-1}$ is even.  That is, $B_0$ or $B_{b-2}$, respectively, fails to have the weak Lefschetz property in characteristic two.

    On the other hand, if $a = 2^m \ell$ and $b = 2^m+1$, where $m \geq 0$ and $l \geq 3$ odd, then for $0 < k < b-2$, $B_k$ fails to have the weak Lefschetz 
    property in characteristic two, by Proposition~\ref{lem:n-1-special-wlp}.  Note that $b \geq 4$ implies $b-2 \geq 2$.
\end{proof}

\subsection{Generation in a single degree}

Using the syzygy gap method in Subsection~\ref{sub:syzygy}, we get the following classification of the strong Lefschetz property for $R/I_{(d,d)}$.

\begin{theorem} \label{thm:slp-dd}
    Let $R = K[x,y]$, where $p$ is the characteristic of $K$, and $I_d = (x^d, y^d)$, where $d \geq 2$.  Then $R/I_d$ has the strong Lefschetz
    property if and only if $p = 0$ or $2d-2 < p^s$, where $s$ is the largest integer such that $p^{s-1}$ divides $(2d-1)(2d+1)$.
\end{theorem}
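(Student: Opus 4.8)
The plan is to reduce the strong Lefschetz property to a statement about syzygy gaps, feed those gaps into Han's Theorem~\ref{thm:Han} to obtain a Diophantine approximation problem, and then solve that problem by a direct optimization. To set up the reduction, note that the case $p = 0$ is Theorem~\ref{thm:char-zero}, so I assume $p > 0$; one may also pass to the algebraic closure of $K$, which changes neither the Lefschetz property nor the syzygy gaps. The socle degree is $t = 2d - 2$. By Proposition~\ref{pro:slp} together with the syzygy-gap criterion of Subsection~\ref{sub:syzygy}, $R/I_d$ has the strong Lefschetz property if and only if for every $0 \le k \le d - 2$ the syzygy gap $\delta(d, d, 2d - 2 - 2k)$ is at most $1$; since that gap is always even, ``at most $1$'' means ``equal to $0$''. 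Writing $c = 2d - 2 - 2k$, this says $R/I_d$ has the strong Lefschetz property if and only if $\delta(d, d, c) = 0$ for every even $c$ with $2 \le c \le 2d - 2$.

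Next I would feed each triple $(d, d, c)$ into Han's Theorem~\ref{thm:Han}, after sorting it (harmless, since $\delta$ is symmetric in its three arguments, the forms $x$, $y$, $x+y$ being interchangeable by a change of coordinates). This shows $\delta^\star(d, d, c) > 0$ precisely when there is an integer $j \ge 1$ and a triple $(u, v, w) \in \ZZ^3_{\rm odd}$ with $\mu(p^{-j}(d,d,c), (u,v,w)) < 1$, equivalently, after clearing denominators, with $|d - p^j u| + |d - p^j v| + |c - p^j w| < p^j$. Thus the strong Lefschetz property fails exactly when such a ``bad approximation'' exists for some admissible $c$.

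The heart of the argument, and the step I expect to be the main obstacle, is the optimization over $(u, v, w)$ and $c$. For the first two coordinates it is best to take $u = v$ equal to the integer nearest $d/p^j$, contributing $2e/p^j$ where $e = \mathrm{dist}(d, p^j\ZZ)$; the condition that $u + v + w$ be odd then forces $w$ odd. For $p$ odd the number $p^j w$ is then odd while $c$ is even, so the third coordinate costs at least $1/p^j$, and one checks that the minimal total over admissible $c$ equals $(2e+1)/p^j$, attained at $c = p^j - 1$ (with $w = 1$), provided $p^j - 1$ lies in $[2, 2d-2]$, i.e. $p^j \le 2d - 1$. Hence a bad approximation exists if and only if there is $j \ge 1$ with both $2e + 1 < p^j$ (a positive ``budget'') and $p^j \le 2d-1$. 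The key number-theoretic identity is that, for $p$ odd, $2e + 1 < p^j$ fails exactly when $p^j \mid 2d - 1$ or $p^j \mid 2d + 1$, that is, when $p^j \mid (2d-1)(2d+1)$; so the budget is positive precisely for $j > v$, where $v$ denotes the exponent of the largest power of $p$ dividing $(2d-1)(2d+1)$. Combining these, the strong Lefschetz property fails if and only if $p^{v+1} \le 2d - 1$; since $p^{v+1} = 2d-1$ would force $p^{v+1} \mid 2d-1$ and hence $v + 1 \le v$, this is equivalent to $p^{v+1} \le 2d - 2$. With $s = v + 1$, this is exactly the stated criterion. For the converse direction I would verify directly that when $p^{v+1} > 2d - 2$ every admissible $c$ gives distance at least $1$: for $j \le v$ the budget vanishes, and for $j \ge v + 1$ one has $p^j \ge 2d + 1$, so $d/p^j < 1/2$ and a short case check on the nearest odd-sum lattice point shows the distance exceeds $1$.

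Finally I would dispose of $p = 2$ separately. Here $(2d-1)(2d+1)$ is odd, so $v = 0$, $s = 1$, and $p^s = 2 \le 2d - 2$ for every $d \ge 2$; correspondingly the property should always fail. This is most quickly seen from Theorem~\ref{thm:slp}(i): the unique power of two in the interval $[d, 2d)$ is even and hence lies in $[d, 2d - 2]$, giving an $m$ with $d \le 2^m \le t$, so the strong Lefschetz property fails. The subtle points to get right are the optimality of the choice $u = v$ under the parity constraint, the translation of the range restriction on $c$ into $p^j \le 2d - 1$, and the reconciliation of the threshold $2d - 1$ with the stated $2d - 2$ via the impossibility of $p^{v+1} = 2d - 1$.
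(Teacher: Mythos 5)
Your proposal is correct, and it follows the same overall reduction as the paper: Proposition~\ref{pro:slp} turns the strong Lefschetz property into the weak Lefschetz property for the algebras $R[x_2]/I_{(d,d,2d-2-2k)}$ with $0\le k\le d-2$, which is then converted into vanishing of the syzygy gaps $\delta(d,d,2d-2-2k)$ and analyzed through Han's Theorem~\ref{thm:Han}. Where you genuinely diverge is in the analysis of Han's criterion. The paper fixes $e\in\{2d-1,2d+1\}$ according to which factor $p$ divides, writes $e=p^sn+j$, and splits into cases on the parity of $n$ to exhibit explicit triples $(u,v,w)$ and explicit values of $k$ witnessing a Manhattan distance below $1$ (and separately rules out the levels $0<r<s$ by a direct estimate). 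You instead compute, for each level $p^j$, the exact minimum of the Manhattan distance over all admissible $c$ and all odd-sum lattice points, namely $(2e_j+1)/p^j$ with $e_j=\mathrm{dist}(d,p^j\ZZ)$ (valid once $p^j\le 2d-1$), and then observe that $2e_j+1\ge p^j$ happens precisely when $p^j$ divides $2d-1$ or $2d+1$, i.e.\ precisely when $p^j$ divides $(2d-1)(2d+1)$. This is cleaner: it replaces the paper's witness constructions and parity casework with a single closed-form optimum, it makes transparent why the quantity $(2d-1)(2d+1)$ governs the answer, and it absorbs the range $d\le p\le 2d-2$ (which the paper first disposes of via Theorem~\ref{thm:slp}) into the uniform analysis. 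Your treatment of $p=2$ via Theorem~\ref{thm:slp}(i) --- the unique power of $2$ in $[d,2d)$ is even and hence lies in $[d,2d-2]$ --- is also more direct than the paper's appeal to Corollary~\ref{cor:char-2-n-1}. The only points you should spell out in a full write-up are the triangle-inequality argument showing that $u\ne v$ always costs at least $1$ (this is what legitimizes the closed-form minimum, and the paper records it explicitly) and the check that the optimal $c=p^j-1$ corresponds to an admissible $k$ with $1\le k\le d-2$; both are routine.
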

\begin{proof}
    By Theorem~\ref{thm:slp}, if $d \leq p \leq 2d-2$, then the strong Lefschetz property fails and if $p > 2d-2$ then the strong Lefschetz
    property holds.  By Corollary~\ref{cor:char-2-n-1}, if $p = 2$, then the strong Lefschetz property fails.  Hence, we need only to consider $2 < p < d$. 

    Next, notice that if such a triple $(u,v,w) \in \ZZ^3_{\rm odd}$ exists, then $u = v$ and so $w$ is odd.  Otherwise, if $u \neq v$, then
    $\left| m - u \right| + \left| m - v \right| \geq |u - v| \geq 1$ for all $m \in \RR$ by the triangle inequality; in particular, this 
    holds for $m = \frac{d}{p^s}$.

    Set $s$ to be the largest integer such that $p^{s-1}$ divides $(2d-1)(2d+1)$.  Further, set $e = 2d+1$, if $p$ divides $2d+1$, otherwise
    set $e = 2d-1$.

    As the sum $d + d + 2(d-1-k)$ is even, if $r = 0$, then $p^r(d,d,2(d-1-k))$ is at least one from every point in $\ZZ^3_{\rm odd}$, under the Manhattan distance.
    Suppose $0 < r < s$, then $p$ divides $e$; set $e = p^rn$ for some odd integer $n$ (recall $e$ is odd).  If $e = 2d-1$, then 
    $d = \frac{p^rn+1}{2}$.  The minimal value of $\left| \frac{d}{p^r} - u \right|$ is $\frac{p^r-1}{2p^r}$ at $u = \frac{n+1}{2}$. 
    The minimal value of $\left| \frac{2(d-1-k)}{p^r} - w \right|$ is $\frac{1+2k}{p^r}$ at $w = n$.  However, 
    $2\frac{p^r-1}{2p^r}+\frac{1+2k}{p^r} = \frac{p^r+2k}{p^r}$ is at least one for all $k$.  Similarly, if $e = 2d+1$, then the Manhattan
    distance to any point in $\ZZ^3_{\rm odd}$ is at least one.

    Suppose $2d-2 < p^s$.  Let $r \geq s$; then $2d-2 < p^r$, and so $\frac{d}{p^r} \leq \frac{1}{2}$.  Hence, we may set $u = v = 0$, 
    and thus $w \geq 1$ as $w$ must be odd.  As $2(d-1-k) \leq 2d-2 < p^r$, we must choose $w = 1$.  However, for all $k \geq 0$,
    \[
        \mu\left(\frac{1}{p^r}(d,d,2(d-1-k)), (0,0,1)\right) = \frac{p^r+2+2k}{p^r} > 1.
    \]

    Suppose $2d-2 \geq p^s$; then $e > p^s$, and we can write $e = p^sn+j$, where $p$ does not divide $n$ and $0 < j < p^s$
    ($j > 0$ as $p^s$ does not divide $e$).  Notice that $n > 0$ as $e > p^s$.  We consider two cases, given by the parity of $n$.

    Suppose $n$ is even, then $j$ is odd as $e$ is odd.  As $p^s$ is odd and $j$ is odd, then $j \neq p^s-1$ and $j \neq p^s-3$.
    Assume $e = 2d-1$, that is, $p$ does not divide $2d+1$.  Notice, $j \neq p^s-2$, otherwise, $2d-1 = p^s(n+1)-2$, and so 
    $2d+1 = p^s(n+1)$, which contradicts our choice of $e$.  Thus, $j \leq p^s-4$.  Set $u = v = \frac{n}{2}$, $w = n-1$, and $k = j+1$.
    As $n \geq 2$, $2p^s<e$ and so $p^s \leq d$.  This in turn implies $k = j + 1 < p^s-2 \leq d-2$; thus, $k$ is applicable. 
    As $e = 2d-1$, then $d = \frac{p^sn+j+1}{2}$.  Further,
    \begin{equation*}
        \begin{split}
            \mu\left( \frac{1}{p^s}(d,d,2(d-1-k)), (u,v,w) \right) &= 2\left| \frac{n}{2} + \frac{j+1}{2p^s} - \frac{n}{2} \right| + \left| n + \frac{-j-3}{p^s}-(n-1) \right| \\
                    &= \frac{j+1}{p^s} + \frac{p^s-j-3}{p^s} \\
                    &= \frac{p^s-2}{p^s} \\
                    &< 1.
        \end{split}
    \end{equation*}
    Assume $e = 2d+1$, that is, $p$ does divide $2d+1$.  In this case, set $u = v = \frac{n}{2}$, $w = n-1$, and $k = j$.
    Notice, $k \leq p^s-2$.  As $n \geq 2$, $2p^s<e$ and so $p^s \leq d$.  This in turn implies $k = j \leq p^s -2 \leq d-2$; thus, $k$ is applicable.
    As $e = 2d+1$, then $d = \frac{p^sn+j-1}{2}$.  Further,
    \begin{equation*}
        \begin{split}
            \mu\left( \frac{1}{p^s}(d,d,2(d-1-k)), (u,v,w) \right) &= 2\left| \frac{n}{2} + \frac{j-1}{2p^s} - \frac{n}{2} \right| + \left| n + \frac{-j-3}{p^s}-(n-1) \right| \\
                    &= \frac{j-1}{p^s} + \left|\frac{p^s-j-3}{p^s}\right| \hspace*{9em} (\star)\\
                    &= \frac{p^s-2}{p^s} \\
                    &< 1.
        \end{split}
    \end{equation*}
    Note for $(\star)$:  If $j \leq p^s-3$, then the absolute value disappears; on the other hand, if $j=p^s-2$, then the latter term is $\frac{1}{p^s}$.

    Suppose $n$ is odd, then $j$ is even as $e$ is odd.  Set $u = v = \frac{n+1}{2}$, $w = n$, and
    $k = \frac{j}{2}-1$.  Notice that $n \geq 1$ and $j \geq 2$.  As $j < p^s < 2d - 1 \leq e$, then $j \leq 2d-3$.  So 
    $k = \frac{j}{2}-1 \leq d-2$; thus, $k$ is applicable.  Suppose $e = 2d-1$, then $d = \frac{p^sn+j+1}{2}$.
    \begin{equation*}
        \begin{split}
            \mu\left( \frac{1}{p^s}(d,d,2(d-1-k)), (u,v,w) \right) &= 2\left| \frac{n}{2} + \frac{j+1}{2p^s} - \frac{n+1}{2} \right| + \left| n + \frac{1}{p^s}-n \right| \\
                    &= \frac{p^s-j-1}{p^s} + \frac{1}{p^s} \\
                    &= \frac{p^s-j}{p^s} \\
                    &< 1.
        \end{split}
    \end{equation*}
    When $e = 2d+1$, the result follows similarly with the finally fraction being $\frac{p^s-j+2}{p^s}$.  We notice that if $j = 2$, then
    $e = 2d+1 = p^sn-2$ and so $2d-1 = p^sn$.  That is, $p$ divides $2d-1$, contradicting our choice of $e$.  Thus, $j \geq 4$.
\end{proof}

\section{The presence of the Lefschetz properties for three variables}\label{sec:three}

In this section, we focus entirely on the strong Lefschetz property for $R/I_{(d,d,d)}$, where $d \geq 2$.  We
use the method of Kustin and Vraciu~\cite{KV} that is based on finding syzygies of low enough degree which we
recall next.

\subsection{Minimal degree syzygies} \label{sub:mgd}

Let $S = K[x_1, \ldots, x_n]$ and $\tup{d} = (d_0, d_1, \ldots, d_n) \in \NN^{n+1}$.  Define $\phi_{\tup{d}}: \oplus_{i=0}^{n}S(-d_i) \rightarrow S$
by the matrix $[(x_1+\cdots+x_n)^{d_0}, x_1^{d_1}, \ldots, x_n^{d_n}]$, and let $\syz(\tup{d}) := \ker{\phi_{\tup{d}}}$.  Next, define 
$\Kos(\tup{d})$ to be the $S$-submodule of $\syz(\tup{d})$ generated by the Koszul relations on the entries of the matrix defining $\phi_{\tup{d}}$,
and define $\overline{\syz}(\tup{d})$ to be the quotient $\syz(\tup{d}) / \Kos(\tup{d})$.  Last, for a non-zero graded module $M$, the 
{\em minimal generator degree} of $M$ is the smallest $d$ such that $M_d$ is non-zero; we denote this by $\mgd{M}$.

\begin{proposition}{\cite[Corollary~2.2(4 \& 6)]{KV}} \label{pro:mgd-wlp}
    Let $\tup{d} = (d_0, d_1, \ldots, d_n) \in \NN^{n+1}$, and set $t = d_0 + \cdots + d_n - (n+1)$.  Then $R/I_{\tup{d}}$ has the weak
    Lefschetz property if and only if $\flfr{t+3}{2} \leq \mgd{\overline{\syz}(\tup{d})}$.
\end{proposition}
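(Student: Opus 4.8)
The plan is to translate the weak Lefschetz property of $A := R/I_{\tup{d}}$ into a vanishing statement about the cokernel $C := S/J_{\tup{d}} = R/(I_{\tup{d}}, \ell)$, where $\ell = x_0 + \cdots + x_n$, and then to translate that, via Gorenstein duality, into a lower bound on $\mgd \overline{\syz}(\tup{d})$. First I would reduce to the cokernel. By Proposition~\ref{pro:mono} it suffices to test the property on $\ell$. Since $A$ is Gorenstein artinian, its Hilbert function is symmetric and unimodal about $t/2$, and the cokernel of $\times \ell : [A]_{k-1} \to [A]_k$ is exactly $[C]_k$. Combining Gorenstein duality (which identifies the kernel of $\times \ell$ in a given degree with the cokernel in the dual degree) with the downward propagation of injectivity for level algebras in Proposition~\ref{pro:mmn21}(i) --- which is the content of Lemma~\ref{lem:wlp-odd} for odd $t$ and reduces to that case through Corollary~\ref{cor:wlp-even} for even $t$ --- I would show that $A$ has the weak Lefschetz property if and only if $[C]_k = 0$ for every $k \geq \flfr{t}{2} + 1$; equivalently, writing $c := \max\{k : [C]_k \neq 0\}$, if and only if $c \leq \flfr{t}{2}$.

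Next I would pin down $\overline{\syz}(\tup{d})$. Write $S' := S/(x_1^{d_1}, \ldots, x_n^{d_n})$ and $f_0 := (x_1 + \cdots + x_n)^{d_0}$, so that $C = S'/(f_0)$, and let $e_0, \ldots, e_n$ be the standard basis of the source $\oplus_{i=0}^n S(-d_i)$ of $\phi_{\tup{d}}$. The central structural step is the isomorphism
\[
    \overline{\syz}(\tup{d}) \;\cong\; (0 :_{S'} f_0)(-d_0),
\]
sending the class of a syzygy $(g_0, g_1, \ldots, g_n)$ to the image $\bar{g}_0 \in S'$ of its first coordinate. This is well defined because $g_0 f_0 = -\sum_{i \geq 1} g_i f_i \in (x_1^{d_1}, \ldots, x_n^{d_n})$, so $\bar g_0$ annihilates $f_0$ in $S'$; it is surjective by lifting a relation $g_0 f_0 \in (x_1^{d_1}, \ldots, x_n^{d_n})$ to a syzygy; and its kernel is precisely $\Kos(\tup{d})$ because $x_1^{d_1}, \ldots, x_n^{d_n}$ is a regular sequence. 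Indeed, a syzygy with $g_0 \in (x_1^{d_1}, \ldots, x_n^{d_n})$ can be corrected by the Koszul relations $x_i^{d_i} e_0 - f_0 e_i$ to a syzygy supported on $e_1, \ldots, e_n$, and every syzygy of a regular sequence is Koszul. The shift $(-d_0)$ records that $g_0$ lives in $S(-d_0)$, so $\mgd \overline{\syz}(\tup{d}) = d_0 + \mgd(0 :_{S'} f_0)$.

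Finally I would apply Gorenstein duality inside $S'$. The algebra $S'$ is Gorenstein artinian of socle degree $t' = t + 1 - d_0$, and under its perfect pairing the map $\times f_0 : [S']_j \to [S']_{j+d_0}$ is, up to that pairing, the transpose of $\times f_0 : [S']_{t'-j-d_0} \to [S']_{t'-j}$; hence its kernel in degree $j$ is nonzero exactly when $[C]_{t'-j} \neq 0$. Taking the smallest such $j$ gives $\mgd(0 :_{S'} f_0) = t' - c$, whence $\mgd \overline{\syz}(\tup{d}) = d_0 + (t' - c) = t + 1 - c$. Combining with the first step, $A$ has the weak Lefschetz property if and only if $c \leq \flfr{t}{2}$, i.e. if and only if $\mgd \overline{\syz}(\tup{d}) = t + 1 - c \geq t + 1 - \flfr{t}{2} = \flfr{t+3}{2}$, the last identity being verified separately for each parity of $t$. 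I expect the main obstacle to be the structural isomorphism in the second step: checking carefully that quotienting by $\Kos(\tup{d})$ leaves exactly the annihilator $(0 :_{S'} f_0)$ with the correct shift is where the regularity of $x_1^{d_1}, \ldots, x_n^{d_n}$ is essential, and the sign and degree bookkeeping there is the most delicate part of the argument.
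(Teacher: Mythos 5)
Your argument is correct, but note that the paper does not prove this proposition at all: it is imported verbatim from Kustin--Vraciu \cite[Corollary~2.2(4 \& 6)]{KV}, so there is no in-paper proof to compare against. What you have written is a clean, self-contained derivation, and its three steps all check out. The reduction of the weak Lefschetz property to the single condition $[C]_{\flfr{t}{2}+1}=0$ (hence $c\le\flfr{t}{2}$) is valid for both parities of $t$, since injectivity propagates downward, surjectivity propagates upward, and Gorenstein duality converts injectivity at the peak into surjectivity one degree above it; your aside that the even case ``reduces through Corollary~\ref{cor:wlp-even}'' is not quite the mechanism (that corollary concerns adjoining $x_{n+1}^2$), but your direct duality argument makes it unnecessary. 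The structural isomorphism $\overline{\syz}(\tup{d})\cong(0:_{S'}f_0)(-d_0)$ is the real content, and your verification is right: the Koszul relations $x_i^{d_i}e_0-f_0e_i$ have first coordinates in $(x_1^{d_1},\ldots,x_n^{d_n})$, surjectivity comes from lifting a relation, and the kernel computation genuinely needs the exactness of the Koszul complex on the regular sequence $x_1^{d_1},\ldots,x_n^{d_n}$, exactly as you say. The final duality step, giving $\mgd(0:_{S'}f_0)=t'-c$ with $t'=t+1-d_0$ and hence $\mgd\overline{\syz}(\tup{d})=t+1-c$, together with the identity $t+1-\flfr{t}{2}=\flfr{t+3}{2}$, closes the argument. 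This is essentially the same route Kustin and Vraciu take (identify the non-Koszul syzygies with a shifted annihilator module inside the Gorenstein quotient $S'$ and dualize), so what you have produced is a correct reconstruction of the cited proof rather than a new one.
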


Thus, $R/I_{\tup{d}}$ fails to have the weak Lefschetz property if we can demonstrate that there exists a non-Koszul syzygy of small enough degree.

\subsection{Finding syzygies}

First, we describe an explicit non-Koszul syzygy of $S/I_{(k,k+j,k+j,k)}$ that will be used repeatedly in the proceeding proof.
This is a generalisation of the syzygy described in the proof of~\cite[Lemma~4.2]{KV}.

\begin{lemma} \label{lem:std-syz}
    Let $j \in \NN_0$ and $k \in \NN$.  Then $(-f_{k+j}, g_k, (-1)^{k+j+1} g_k, f_{k+j})$ is a non-Koszul syzygy in $\overline{\syz}{(k,k+j,k+j,k)}$, where
    \[
        f_k := \frac{y^k-(-z)^k}{y+z} = \sum_{i=0}^{k-1} y^i(-z)^{k-i-1}
    \]
    and
    \[
        g_k := \frac{x^k-(x+y+z)^k}{y+z} = -\sum_{i=0}^{k-1} \binom{k}{i}x^i(y+z)^{k-i-1}.
    \]
\end{lemma}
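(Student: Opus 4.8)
The plan is to establish the two assertions separately: that the quadruple is a genuine syzygy, and that its class in $\overline{\syz}$ is nonzero. For the first, I would substitute the closed forms of $f_{k+j}$ and $g_k$ and verify the defining relation by a telescoping cancellation. The essential observation is that both coefficients are, by construction, obtained after dividing by $y+z$: one has $(y+z)\,f_{k+j}=y^{k+j}-(-z)^{k+j}$ and $(y+z)\,g_k=x^k-(x+y+z)^k$. Since $S=K[x,y,z]$ is a domain and $y+z$ is a nonzerodivisor, it suffices to check that $y+z$ times the alleged relation vanishes, at which point each coefficient is replaced by an honest difference of powers of the generators.

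The key step is then to group the four terms into the piece carrying the coefficient $\pm f_{k+j}$ and the piece carrying $\pm g_k$. The point of the construction is a cross-cancellation: $f_{k+j}$ is paired with the two generators whose difference is, up to sign, $g_k(y+z)$, while $g_k$ is paired with the two generators whose difference is $f_{k+j}(y+z)$. After multiplying by $y+z$ and substituting the two identities above, the first piece becomes $\mp f_{k+j}\,g_k\,(y+z)$ and the second $\pm g_k\,f_{k+j}\,(y+z)$, so the total is
\[
    f_{k+j}\,g_k\,(y+z)-f_{k+j}\,g_k\,(y+z)=0.
\]
This is exactly the telescoping behind the syzygy of \cite[Lemma~4.2]{KV}, now carried through with the shift by $j$. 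I expect this bookkeeping to be the real obstacle: one must match each coefficient to the correct generator and track the sign $(-1)^{k+j+1}$ arising from $(-z)^{k+j}=(-1)^{k+j}z^{k+j}$, so that the two cross terms cancel exactly rather than only up to sign.

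For the non-Koszul assertion I would argue by degrees. The quadruple is homogeneous of degree $2k+j-1$. A Koszul relation on a pair of generators of degrees $d_i$ and $d_l$ lives in degree $d_i+d_l$ and is supported only in coordinates $i$ and $l$. Among the four generators the only pair whose degrees sum to at most $2k+j-1$ is the pair of degree-$k$ generators, so in degree $2k+j-1$ every element of $\Kos(k,k+j,k+j,k)$ is a form times that single Koszul relation and hence vanishes in the two coordinates carrying $g_k$ and $(-1)^{k+j+1}g_k$. As those two entries are nonzero, the element cannot be Koszul; and when $j=0$ the degree $2k-1$ already lies below $d_i+d_l$ for every pair, so $\Kos$ is zero in that degree and the conclusion is immediate. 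Either way the class is nonzero in $\overline{\syz}(k,k+j,k+j,k)$, as required.
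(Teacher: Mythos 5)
Your proof is correct. The verification that the quadruple is a syzygy is the same computation the paper performs: the paper substitutes the closed forms $(y+z)f_{k+j}=y^{k+j}-(-z)^{k+j}$ and $(y+z)g_k=x^k-(x+y+z)^k$ and lets the two cross terms cancel, exactly the telescoping you describe (and your preliminary multiplication by the nonzerodivisor $y+z$ is harmless but not needed, since $f$ and $g$ are already polynomials). Where you genuinely diverge is the non-Koszul step. The paper's argument is coordinate-wise: the entry paired with $(x+y+z)^k$ is $f_{k+j}$, a nonzero polynomial in $y,z$ of degree $k+j-1$, hence not in the ideal $(x^k,y^{k+j},z^{k+j})$; since every element of $\Kos$ has its $i$-th entry in the ideal generated by the other three powers, the class is nonzero. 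Your argument is instead a degree count on $\Kos$ itself: in degree $2k+j-1$ only the Koszul relation on the two degree-$k$ generators can appear (and none at all when $j=0$), so any Koszul element vanishes in the coordinates carrying $\pm g_k$, while $g_k\neq 0$ because its $(y+z)^{k-1}$ term has coefficient $-1$ in every characteristic. Both arguments are sound and equally short; the paper's has the small advantage of requiring no case split on $j$ and no enumeration of which pairs of generator degrees fit below the given degree, while yours gives the sharper information that $[\Kos(k,k+j,k+j,k)]_{2k+j-1}$ is exactly $[S]_{j-1}$ times a single Koszul relation.
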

\begin{proof}
    Notice that $(-f_{k+j}, g_k, (-1)^{k+j+1} g_k, f_{k+j}) \in \syz{(k,k+j,k+j,k)}$ as
    \begin{equation*}
        \begin{split}
             & \; -f_{k+j} x^k + g_k y^{k+j} + (-1)^{k+j+1} g_k z^{k+j} + f_{k+j} (x+y+z)^k \\
            =& \; f_{k+j} ((x+y+z)^k - x^k) + g_k (y^{k+j} - (-z)^{k+j}) \\
            =& \; \frac{y^{k+j} - (-z)^{k+j}}{y+z} ((x+y+z)^k - x^k) + \frac{x^k-(x+y+z)^k}{y+z} (y^{k+j} - (-z)^{k+j}) \\
            =& \; 0.
        \end{split}
    \end{equation*}

    Furthermore, it is clear that $f_{k+j} \not \in (x^k, y^{k+j}, z^{k+j})$ since $f_{k+j}$ is a polynomial in $y$ and $z$
    of degree $k+j-1$.  Thus, the described syzygy is non-Koszul.
\end{proof}

In order to demonstrate that the algebra $R/I_{(d,d,d)}$ does not have the strong Lefschetz property, we classify the
weak Lefschetz property for $S/I_{(d,d,d,d-3)}$.  

\begin{proposition} \label{pro:dddd-3}
    Let $d \geq 6$, and set $\tup{d} = (d,d,d,d-3)$.  Then $R/I_{\tup{d}}$ has the weak Lefschetz property in characteristic
    $p$ if and only if $p = 0$ or $p > 2d-3$.
\end{proposition}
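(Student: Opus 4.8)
The plan is to run everything through Proposition~\ref{pro:mgd-wlp}. Here the socle degree is $t = 4d-7$, so $\flfr{t+3}{2}=2d-2$ and $R/I_{\tup{d}}$ has the weak Lefschetz property precisely when $\mgd{\overline{\syz}(\tup{d})} \geq 2d-2$; equivalently, it \emph{fails} exactly when $\overline{\syz}(\tup{d})$ contains a non-Koszul syzygy of degree at most $2d-3$, where $\phi_{\tup{d}}$ is given by $[\ell^d, x_1^d, x_2^d, x_3^{d-3}]$ with $\ell = x_1+x_2+x_3$. The forward implication is then immediate: since $\clfr{t+1}{2}=2d-3$ and $d_0=d\leq\clfr{t}{2}$ for $d\geq 6$, Proposition~\ref{pro:wlp-naive}(ii) yields the property for all $p>2d-3$, and $p=0$ is covered by Theorem~\ref{thm:char-zero}.

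For the failing direction I would produce the syzygy from a single cofactor. Suppose $h\in S$ has $\deg h\leq d-3$, is not a multiple of $x_3^{d-3}$, and satisfies $h\ell^d \in (x_1^d,x_2^d,x_3^{d-3})$, say $h\ell^d = \beta x_1^d + \gamma x_2^d + \varepsilon x_3^{d-3}$. Then $(h,-\beta,-\gamma,-\varepsilon)$ is a syzygy of degree $\deg h + d \leq 2d-3$, and it is non-Koszul: its $\ell^d$-component equals $h$, while the $\ell^d$-component of any Koszul syzygy lies in $(x_1^d,x_2^d,x_3^{d-3})$, whose elements of degree at most $d-3$ are scalar multiples of $x_3^{d-3}$. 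So it is enough to exhibit such an $h$ for every prime $0<p\leq 2d-3$.

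When a power satisfies $d\leq p^m\leq 2d-3$ (in particular when $p=d$) the choice $h=\ell^{p^m-d}$ works, since $\deg h = p^m-d\leq d-3$ and $h\ell^d=\ell^{p^m}=x_1^{p^m}+x_2^{p^m}+x_3^{p^m}\in(x_1^d,x_2^d,x_3^{d-3})$; this reproduces the failures of Proposition~\ref{pro:wlp-naive}(i). The substance lies in the remaining \emph{gap} primes, those with $p^m<d\leq 2d-3<p^{m+1}$ for some $m\geq 1$. For these I would still begin from $\ell^{p^m}=x_1^{p^m}+x_2^{p^m}+x_3^{p^m}$, write $\ell^d=\ell^{d-p^m}\ell^{p^m}$, strip off the evident multiples of $x_1^d,x_2^d,x_3^{d-3}$, and select $h$---using the standard syzygy of Lemma~\ref{lem:std-syz} on the resulting Frobenius-reduced configuration---so that $h$ annihilates the leftover cross-terms modulo the ideal.

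By Kummer's theorem (Theorem~\ref{thm:Kummer}), the requirement $h\ell^d\in(x_1^d,x_2^d,x_3^{d-3})$ is exactly that every multinomial coefficient $\binom{d}{u,v,w}$ indexing an ``escaping'' monomial of $h\ell^d$---one of the form $x_1^ax_2^bx_3^c$ with $a,b<d$ and $c<d-3$---is divisible by $p$, i.e.\ that the corresponding $(u,v,w)$ carries in base $p$. I expect this to be the main obstacle: one must check that a cofactor $h$ of degree at most $d-3$ and outside $(x_3^{d-3})$ can be chosen uniformly over all gap primes. This is a purely combinatorial divisibility problem in which the defining inequality $2d-3<p^{m+1}$ should be exactly what forces the required carries, the most delicate regime being $p^m$ far below $d$, where $x_3^{p^m}\notin(x_3^{d-3})$ and the cross-term obstruction is largest.
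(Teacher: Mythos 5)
Your framework is the right one and coincides with the paper's: with $t=4d-7$, Proposition~\ref{pro:mgd-wlp} says the weak Lefschetz property fails exactly when $\overline{\syz}(\tup{d})$ is nonzero in some degree at most $2d-3$; your cofactor criterion for non-Koszulness is valid (a syzygy of degree $\leq 2d-3$ whose $\ell^d$-entry has degree $\leq d-3$ and is not a scalar multiple of $x_3^{d-3}$ cannot be Koszul); the direction ``$p=0$ or $p>2d-3$ implies WLP'' follows from Theorem~\ref{thm:char-zero} and Proposition~\ref{pro:wlp-naive}(ii) as you say; and the choice $h=\ell^{p^m-d}$ correctly disposes of every prime with a power in $[d,2d-3]$. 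The problem is that all of the actual content of the proposition lies in the remaining primes --- those $p<d$ with $p^m<d$ and $p^{m+1}>2d-3$ --- and for these you offer only a plan (``I would begin\ldots'', ``I expect\ldots'', ``one must check\ldots''), not an argument. The paper's proof spends essentially its entire length on exactly these primes: a three-way split into $p=2$ (where the only surviving case is $d=2^m+1$, handled by the explicit syzygy $(yz,xz,xy,xyz(x+y+z)^2)$), $p=3$ (with subcases on $2d=3q+r$), and $p\geq 5$ (with subcases on the parity of $q$ in $2d=qp+r$), in each case writing down and verifying an explicit non-Koszul syzygy of degree $\leq 2d-3$ built from Frobenius powers of the polynomials $f_k,g_k$ of Lemma~\ref{lem:std-syz} multiplied by suitable monomials and powers of $x+y+z$. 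None of that construction, nor any substitute for it, appears in your proposal.

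A secondary issue is that the reduction you propose for closing this gap is not correct as stated. For a non-monomial cofactor $h=\sum_{\nu}c_{\nu}x^{\nu}$, the coefficient of an escaping monomial $x_1^{a}x_2^{b}x_3^{c}$ in $h\ell^{d}$ is the sum $\sum_{\nu}c_{\nu}\binom{d}{a-\nu_1,b-\nu_2,c-\nu_3}$, which can vanish modulo $p$ by cancellation without any individual multinomial coefficient being divisible by $p$; so the Kummer/carry criterion you invoke characterises the containment $h\ell^d\in(x_1^d,x_2^d,x_3^{d-3})$ only when $h$ is a single monomial. You have neither shown that monomial cofactors suffice for every gap prime nor verified the resulting divisibility condition for any of them, whereas the cofactors appearing in the paper's verified syzygies are in general genuinely non-monomial (e.g.\ $p$-th powers of $f_k$ and $g_k$). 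As it stands, the proposal proves the easy half of the statement and the easy primes of the hard half, and leaves the core of the proposition open.
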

\begin{proof}
    Set $\beta$ to be the quadruple $(x^d, y^d, z^d, (x+y+z)^{d-3})$.  The proof follows from several cases.

    {\em (i) Characteristic two}: Let $p = 2$.  If $d \neq 2^m+1$ for some $m \in \NN$, then $d = 2^m-k$ for some $0 \leq k \leq 2^{m-1}-2$,
    and $2d-3 = 2^{m+1}-2k-3 \geq 2^m+1$.  Thus, $d \leq 2^m \leq 2d-3$ and so $R/I_{\tup{d}}$ fails to have the weak Lefschetz
    property by Proposition~\ref{pro:wlp-naive}.  
    
    Suppose $d = 2^m+1$ for some $m \in \NN$.  Then $\alpha = (yz, xz, xy, xyz(x+y+z)^2)$
    is a syzygy in $\syz(\tup{d})$ as 
    \begin{equation*}
        \begin{split}
            \alpha \cdot \beta 
                &= x^{2^m+1}yz + xy^{2^m+1}z + xyz^{2^m+1} + xyz(x+y+z)^{2^m} \\
                &= xyz(x^{2^m} + y^{2^m} + z^{2^m} + (x+y+z)^{2^m}) \\
                &= xyz\left(x+y+z+(x+y+z)\right)^{2^m} \\
                &= 0.
        \end{split}
    \end{equation*}
    Further, $xyz(x+y+z)^2 \not \in (x^d, y^d, z^d)$ and $\deg{\alpha} = d + 2 \leq 2d-3$, as $d \geq 6$.  Hence by 
    Proposition~\ref{pro:mgd-wlp}, $R/I_{\tup{d}}$ fails to have the weak Lefschetz property. 

    Note that many of the following cases are proven almost identically to the case in the preceding paragraph.  
    In each of the forward cases, we provide only the syzygy, as the rest is straightforward to check.

    {\em (ii) Characteristic three}: Let $p = 3$, and write $2d = 3q + r$ with unique $q, r \in NN$ such that $0 \leq r \leq 2$.
    Suppose $q = 3^m$ and $r = 1$, then $d = 3^{m} + \frac{3^m+1}{2} = 3j - 1$, where $j = \frac{3^m+1}{2}$.  Let $\alpha$ be
    \[
        \left(x^{j-1}y^j(x+y+z)^{j-3}, (x-z)^{3^m}(x+y+z)^{j-3}, -y^{j}z^{j-1}(x+y+z)^{j-3}, -(x-z)^{3^m}y^{j}\right).
    \]
    Then $\alpha$ is in $\overline{\syz}(\tup{d})$, and $\deg{\alpha} = 2d-3$.  Thus, by Proposition~\ref{pro:mgd-wlp}, 
    $R/I_{\tup{d}}$ fails to have the weak Lefschetz property.

    Suppose $3^m < q \leq 2 \cdot 3^m-1$ for some $m$, then $d \leq \frac{6 \cdot 3^m - 3 + r}{2} \leq 3^{m+1}$ and 
    $2d-3 = 3q + (r-3) \geq 3(3^m+1) - 3 = 3^{m+1}$.  Thus, $R/I_{\tup{d}}$ fails to have the weak Lefschetz property by
    Proposition~\ref{pro:wlp-naive}.  
    
    Suppose $2\cdot 3^m \leq q < 3^{m+1}$ for some $m$.  Set $k = d-3^{m+1}$, so $0 \leq k \leq \frac{3^{m+1}-1}{2}$.
    Let $\alpha$ be
    \[  
        \left(y^k z^k (x+y+z)^j, x^k z^k (x+y+z)^j, x^k y^k (x+y+z)^j, -x^k y^k z^k (x+y+z)^{\max\{0, 3-k\}}\right),
    \]
    where $j = {\max\{0,k-3\}}$.  Then $\alpha$ is $\overline{\syz}(\tup{d})$, and $\deg{\alpha} \leq 2d-3$.  
    Thus, by Proposition~\ref{pro:mgd-wlp}, $R/I_{\tup{d}}$ fails to have the weak Lefschetz property.

    {\em (iii) Characteristic at least five}: Let $p \geq 5$ be prime, and let $f_k$ and $g_k$ be defined as in Lemma~\ref{lem:std-syz}.
    Write $2d = qp + r$ with unique $q, r \in NN$ such that $0 \leq r < p$.  Notice that $q$ and $r$ must have the same parity as $p$ is odd.
    We distinguish two sub-cases based on the parity of $q$ and $r$.

    {\em (a) The quotient is even}: Suppose $q$ and $r$ are even.  Set $j = {\max\{0,\frac{r}{2}-3\}}$, and $\alpha$ to be
    \[
        \left(
        y^{\frac{r}{2}} z^{\frac{r}{2}} (x+y+z)^j (-f_{\frac{q}{2}}^p),
        x^{\frac{r}{2}} z^{\frac{r}{2}} (x+y+z)^j g_{\frac{q}{2}}^p,
        \right.
    \]
    \[
        \left.
        x^{\frac{r}{2}} y^{\frac{r}{2}} (x+y+z)^j ((-1)^{\frac{q}{2}+1} g_{\frac{q}{2}})^p,
        x^{\frac{r}{2}} y^{\frac{r}{2}} z^{\frac{r}{2}} (x+y+z)^{\max\{0,3-\frac{r}{2}\}} f_{\frac{q}{2}}^p
        \right).
    \]
    Then $\alpha$ is in $\overline{\syz}(\tup{d})$, and $\deg{\alpha} \leq 2d-3$.  Thus, by Proposition~\ref{pro:mgd-wlp}, 
    $R/I_{\tup{d}}$ fails to have the weak Lefschetz property.

    {\em (b) The quotient is odd}:  Suppose $q$ and $r$ are odd.  First, suppose $r = 1$.  Then set $j = d-\frac{q-1}{2}p$, and $\alpha$ to be
    \[
        \left(
            (x+y+z)^{j-3} (-f_{\frac{q+1}{2}})^p,
            x^{j} y^{j-1} (x+y+z)^{j-3} g_{\frac{q-1}{2}}^p,
        \right.
    \]
    \[
        \left.
            x^{j} z^{j-1} (x+y+z)^{j-3} ((-1)^{\frac{q-1}{2}+2} g_{\frac{q-1}{2}})^p,
            x^{j} f_{\frac{q+1}{2}}^p
        \right).
    \]
    Notice that $d+j-1 = \frac{q+1}{2}p$.  Then $\alpha$ is in $\overline{\syz}(\tup{d})$, and $\deg{\alpha} = 2d-3$.  Thus, by 
    Proposition~\ref{pro:mgd-wlp}, $R/I_{\tup{d}}$ fails to have the weak Lefschetz property.

    Last, suppose $r\geq 3$.  Then set $j = d-r-\frac{q-1}{2}p$, and $\alpha$ to be
    \[
        \left(
            x^{j} (-f_{\frac{q+1}{2}})^p,
            y^{j} g_{\frac{q+1}{2}}^p,
            z^{j} ((-1)^{\frac{q+1}{2}+1} g_{\frac{q+1}{2}})^p,
            (x+y+z)^{j+3} f_{\frac{q+1}{2}}^p
        \right).
    \]
    Then $\alpha$ is in $\overline{\syz}(\tup{d})$, and $\deg{\alpha} \leq 2d-3$.  Thus, by Proposition~\ref{pro:mgd-wlp}, 
    $R/I_{\tup{d}}$ fails to have the weak Lefschetz property.
\end{proof}

\begin{remark}
    Each of the syzygies described in the preceding proof are modifications of extant syzygies by means of the Frobenius homomorphism and 
    multiplying by an appropriate ring element.  This is similar to the approach used by Kustin and Vraciu~\cite{KV}.

    Further, to discuss the cases left out in Proposition~\ref{pro:dddd-3}, we notice that the determinants associated to $(4,4,4,1)$,
    and $(5,5,5,2)$ are $20 = 2^2 \cdot 5$ and $-43750 = -2 \cdot 5^5 \cdot 7$, respectively.  Thus, $R/I_{(4,4,4,1)}$ fails
    to have the weak Lefschetz property in exactly characteristics $2$ and $5$.  Similarly, $R/I_{(5,5,5,2)}$ fails to have
    the weak Lefschetz property in exactly characteristics $2, 5,$ and $7$.
\end{remark}

\begin{theorem} \label{thm:slp-ddd}
    Let $d \geq 2$, and set $\tup{d} = (d,d,d)$.  Then $R/I_{\tup{d}}$ has the strong Lefschetz property in characteristic $p$ if
    and only if $p = 0$ or $p > 3(d-1)$.
\end{theorem}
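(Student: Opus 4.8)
The plan is to use Proposition~\ref{pro:slp} to convert the strong Lefschetz property of $R/I_{(d,d,d)}$ into a family of weak Lefschetz statements, and then to cover every prime characteristic either with the \emph{a priori} bound of Theorem~\ref{thm:slp} or with the sharp computation of Proposition~\ref{pro:dddd-3}. Throughout set $t = 3(d-1) = 3d-3$, the socle degree. The two easy halves come essentially for free: in characteristic zero the property holds by Theorem~\ref{thm:char-zero}, and for $p > t$ it holds by Theorem~\ref{thm:slp}(ii). So the entire content is to prove that $R/I_{(d,d,d)}$ fails the strong Lefschetz property for every prime $p$ with $2 \leq p \leq 3d-3$.

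First I would dispatch the primes that the \emph{a priori} bound already reaches. By Theorem~\ref{thm:slp}(i), applied with $d_0 = d$ and $t = 3d-3$, the property fails whenever $d \leq p^m \leq 3d-3$ for some positive integer $m$. Taking $m = 1$ handles every prime in the range $d \leq p \leq 3d-3$. For $p \in \{2,3\}$ a uniform elementary estimate shows the interval $[d, 3d-3]$ always contains a power of $p$: if $p^m$ is the least power with $p^m \geq d$, then $p^{m-1} \leq d-1$, whence $p^m \leq p(d-1) \leq 3(d-1) = 3d-3$, so $d \leq p^m \leq 3d-3$ and these primes fail as well.

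The only primes escaping this net are those with $5 \leq p < d$, which occur precisely when $d \geq 6$. To eliminate them I would invoke Proposition~\ref{pro:dddd-3} through the reduction of Proposition~\ref{pro:slp}: taking $k = d$ (admissible since $d \leq \flfr{t}{2}$ for $d \geq 3$) produces the exponent $t - 2k = d-3$, so the weak Lefschetz property of $R[x_3]/I_{(d,d,d,d-3)}$ is a necessary condition for the strong Lefschetz property of $R/I_{(d,d,d)}$. By Proposition~\ref{pro:dddd-3}, for $d \geq 6$ this four-generator algebra fails the weak Lefschetz property for every prime $p \leq 2d-3$; since any prime $p < d$ satisfies $p \leq d-1 \leq 2d-3$, the remaining primes are all covered and the strong Lefschetz property fails. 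Together the ranges $\{2,3\}$, $[d,3d-3]$, and the escaped band $[5,d)$ tile the whole interval $[2,3d-3]$.

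The genuine obstacle here is organisational rather than computational: one must certify that these ranges leave no prime uncovered, and in particular that the single choice $k=d$ suffices to reach, via Proposition~\ref{pro:dddd-3}, exactly the small primes $5 \leq p < d$ on which the crude prime-power bound of Theorem~\ref{thm:slp} is silent. The verification that powers of $2$ and $3$ always land in $[d,3d-3]$, the bookkeeping that $p < d \Rightarrow p \leq 2d-3$, and the admissibility of the index $k=d$ are the only points requiring care, since the hard analytic work has already been absorbed into Proposition~\ref{pro:dddd-3}.
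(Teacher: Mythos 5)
Your proof is correct, and its core is the same as the paper's: reduce via Proposition~\ref{pro:slp} to the weak Lefschetz property of the algebras $R[x_3]/I_{(d,d,d,t-2k)}$, kill the primes in $[d,3d-3]$ with Theorem~\ref{thm:slp}(i), and kill the small primes with Proposition~\ref{pro:dddd-3} at $k=d$. Where you genuinely diverge is in the residual cases. The paper applies Proposition~\ref{pro:dddd-3} to \emph{all} primes $2\leq p<d$ when $d\geq 6$, and then must handle $2\leq d\leq 5$ separately by taking $k=0$ and explicitly factoring the multinomial coefficients $\binom{3(d-1)}{d-1,d-1,d-1}$ via Lemma~\ref{lem:large-top}. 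You instead observe that for $p\in\{2,3\}$ the interval $[d,3d-3]$ always contains a power of $p$ (since the least $p^m\geq d$ satisfies $p^m\leq p(d-1)\leq 3(d-1)$), so these primes are already absorbed by Theorem~\ref{thm:slp}(i) for every $d\geq 2$; the only survivors are primes with $5\leq p<d$, which forces $d\geq 6$ and puts you squarely in the range of Proposition~\ref{pro:dddd-3}. This buys you a uniform argument with no case-by-case computation for small $d$, at the cost of leaning slightly harder on the prime-power clause of Theorem~\ref{thm:slp}(i); all the admissibility checks you flag ($k=d\leq\flfr{t}{2}$ for $d\geq 3$, and $p<d\Rightarrow p\leq 2d-3$) are correct.
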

\begin{proof}
    By Theorem~\ref{thm:slp}, if $d \leq p \leq 3(d-1)$, then $R/I_{\tup{d}}$ fails to have the strong Lefschetz property, and if
    $p > 3(d-1)$, then $R/I_{\tup{d}}$ has the strong Lefschetz property.

    If $d \geq 6$, then by Proposition~\ref{pro:dddd-3}, for $2 \leq p < d$, $S/I_{(d,d,d,d-3)}$ fails to have the weak Lefschetz
    property.  Thus by Proposition~\ref{pro:slp}, $R/I_{\tup{d}}$ fails to have the strong Lefschetz property for $2 \leq p < d$
    as $d-3 = t - 2k$, where $t = 3d-3$ and $k = d$.

    For the remaining four cases, we consider $k = 0$ and use Lemma~\ref{lem:large-top}.  In particular, notice that
    $\binom{3}{1,1,1} = 2 \cdot 3$, $\binom{6}{2,2,2} = 2 \cdot 3^2 \cdot 5$, $\binom{9}{3,3,3} = 2^4 \cdot 3 \cdot 5 \cdot 7$,
    and $\binom{12}{4,4,4} = 2 \cdot 3^2 \cdot 5^2 \cdot 7 \cdot 11$.  Hence, for $2 \leq d \leq 5$, $S/I_{(d,d,d,t)}$ fails
    to have the weak Lefschetz property for $2 \leq p < d$, and so $R/I_{(d,d,d)}$ fails to have the strong Lefschetz property.
\end{proof}

\section{The presence of the Lefschetz properties in many variables}\label{sec:many}

We first consider the strong Lefschetz property in characteristic two when $n \geq 2$, that is, when $R$ has at least three variables.
Then we consider the strong Lefschetz property for $I_{\tup{d}}$ having generators of the same degree $d_0 = \cdots = d_n$ in at least
four variables.

\subsection{Characteristic two}

We expand Corollary~\ref{cor:Kummer} to classify when multinomial coefficients are odd.

\begin{lemma} \label{lem:odd-multi}
    Let $a_0 \geq \cdots \geq a_n \geq 1$.  Then the following are equivalent:
    \begin{enumerate}
        \item $\binom{a_0 + \cdots + a_n}{a_0, \ldots, a_n}$ is odd,
        \item $\BP(a_i)$ and $\BP(a_j)$ are disjoint for all $0 \leq i < j \leq n$, and
        \item $\BP(a_{i_1} + \cdots + a_{i_m})$ and $\BP(a_j)$ are disjoint for any $1 \leq m < n$ and $j \not\in \{i_1, \ldots, i_m\} \subsetneq [n]$.
    \end{enumerate}
\end{lemma}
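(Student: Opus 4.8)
The plan is to prove the three conditions equivalent by establishing the cycle (i) $\Rightarrow$ (ii) $\Rightarrow$ (iii) $\Rightarrow$ (i), leveraging Corollary~\ref{cor:Kummer} throughout. The central tool is the observation that a multinomial coefficient factors as a product of binomial coefficients: writing $s_k = a_0 + \cdots + a_k$ for the partial sums, we have
\[
    \binom{a_0 + \cdots + a_n}{a_0, \ldots, a_n} = \prod_{k=1}^{n} \binom{s_k}{a_k},
\]
since each successive binomial peels off one part. This telescoping identity lets me convert statements about the multinomial coefficient into conjunctions of statements about ordinary binomial coefficients, to which Corollary~\ref{cor:Kummer} applies directly.

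For (i) $\Rightarrow$ (ii), I would argue that the multinomial coefficient being odd forces each factor $\binom{s_k}{a_k}$ to be odd (a product of integers is odd iff every factor is), so by Corollary~\ref{cor:Kummer} the sets $\BP(a_k)$ and $\BP(s_{k-1})$ are disjoint for each $k$. I then extract pairwise disjointness: since $\BP(a_i) \subseteq \BP(s_{k-1})$ whenever $i < k$ (inductively, $s_{k-1}$ is a sum of terms whose bit-positions are mutually disjoint, so there are no carries and the bit-positions simply accumulate), disjointness of $\BP(a_k)$ from $\BP(s_{k-1})$ yields disjointness of $\BP(a_k)$ from each individual $\BP(a_i)$ with $i<k$. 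The implication (ii) $\Rightarrow$ (iii) is then a clean consequence of the same carry-free principle: if all the $\BP(a_i)$ are mutually disjoint, then $\BP(a_{i_1} + \cdots + a_{i_m}) = \BP(a_{i_1}) \dotcup \cdots \dotcup \BP(a_{i_m})$ (no carries occur in the addition), and this disjoint union is disjoint from $\BP(a_j)$ for any $j$ outside the index set, precisely because each constituent is. Finally, (iii) $\Rightarrow$ (i) is immediate by specialising (iii) to the nested index sets $\{0, \ldots, k-1\}$, which recovers disjointness of $\BP(s_{k-1})$ and $\BP(a_k)$, hence oddness of each factor $\binom{s_k}{a_k}$ via Corollary~\ref{cor:Kummer}, hence oddness of the product.

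The main obstacle, and the step deserving the most care, is the carry-free addition lemma that underlies every implication: namely that when $\BP(b)$ and $\BP(c)$ are disjoint, $\BP(b+c) = \BP(b) \cup \BP(c)$ with no carries, so that bit-positions add additively over disjoint summands. This is really the engine converting partial-sum statements into individual-part statements and back, and it needs to be invoked (and ideally stated once) with the inductive bookkeeping handled cleanly so that $\BP(s_{k-1}) = \bigcup_{i<k} \BP(a_i)$ is genuinely a \emph{disjoint} union. Once that fact is isolated, the three implications become short; the risk is conflating "the multinomial coefficient is odd" with pairwise statements without tracking that the partial sums themselves have disjoint bit-positions, which is exactly what makes the binomial factorisation's factors individually odd. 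I would therefore state the carry-free lemma explicitly at the outset and then run the cycle mechanically.
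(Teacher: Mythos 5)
Your proof is correct, and its overall skeleton matches the paper's: the cycle (i) $\Rightarrow$ (ii) $\Rightarrow$ (iii) $\Rightarrow$ (i), Corollary~\ref{cor:Kummer} as the engine, the telescoping factorisation of the multinomial coefficient for (iii) $\Rightarrow$ (i), and carry-free addition of disjoint bit-position sets for (ii) $\Rightarrow$ (iii). The one implication where you genuinely diverge is (i) $\Rightarrow$ (ii). The paper observes that $\binom{a_i+a_j}{a_i}$ divides $\binom{a_0+\cdots+a_n}{a_0,\ldots,a_n}$ for every pair $i<j$, so oddness of the multinomial coefficient forces oddness of each such binomial coefficient, and pairwise disjointness of $\BP(a_i)$ and $\BP(a_j)$ falls out of Corollary~\ref{cor:Kummer} with no induction at all. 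You instead route this implication through the telescoping product: each factor $\binom{s_k}{a_k}$ is odd, hence $\BP(a_k)$ is disjoint from $\BP(s_{k-1})$, and you then recover the pairwise statement by inductively establishing $\BP(s_{k-1})=\bigcup_{i<k}\BP(a_i)$ as a disjoint union via the carry-free lemma. This is valid --- your inductive bookkeeping is exactly right, and you correctly identify it as the delicate step --- but it is more work than the paper's pairwise divisibility trick, which disposes of (i) $\Rightarrow$ (ii) in one line. What your version buys in exchange is uniformity: the entire lemma rests on a single factorisation identity plus one explicitly stated carry-free principle, which is arguably cleaner to verify even if slightly longer.
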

\begin{proof}
    Set $M = \binom{a_0 + \cdots + a_n}{a_0, \ldots, a_n}$.

    {\em (i) $\Rightarrow$ (ii)}:  Notice $\binom{a_i+a_j}{a_i}$ divides $M$ for all $0 \leq i < j \leq n$.  Thus, if $M$ is odd, then so
    is $\binom{a_i+a_j}{a_i}$.  Hence, by Corollary~\ref{cor:Kummer}, $\BP(a_i)$ and $\BP(a_j)$ are disjoint.

    {\em (ii) $\Rightarrow$ (iii)}: Let $1 \leq m < n$ and $j \not\in \{i_1, \ldots, i_m\} \subsetneq [n]$.  As $\BP(a_0), \ldots, \BP(a_n)$
    are disjoint, $\BP(a_{i_1} + \cdots + a_{i_m}) = \dotcup B_{i_k}$ is disjoint from $\BP(a_j)$.

    {\em (iii) $\Rightarrow$ (i)}: Recall that 
    \[
        M = \binom{a_0 + \cdots + a_n}{a_0, \ldots, a_n} = \prod_{i=2}^n \binom{a_1 + \cdots + a_i}{a_i}.
    \]
    As $\BP(a_1 + \cdots + a_{i-1})$ and $\BP(a_i)$ are disjoint, by Corollary~\ref{cor:Kummer}, $\binom{a_1 + \cdots + a_i}{a_i}$ is odd.  Hence
    $M$ is a product of odd integers, that is, $M$ is odd.
\end{proof}

By the preceding lemma, for certain pairs of multinomial coefficients, one must be even.

\begin{lemma} \label{lem:one-or-the-other}
    Let $n \geq 2$, $a_0 \geq \cdots \geq a_n \geq 1$, and suppose $a_0 \geq a_1 + \cdots + a_n$.  Then
    $\binom{a_0 + \cdots + a_n}{a_0, \ldots, a_n}$ is even or $\binom{a_0 + 1}{a_0 + 1 - (a_1 + \cdots + a_n), a_1, \ldots, a_n}$ is even.
\end{lemma}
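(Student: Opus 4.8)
The plan is to argue by contradiction, assuming both displayed multinomial coefficients are odd and extracting a contradiction from the hypothesis $n \geq 2$. Write $s := a_1 + \cdots + a_n$, so the hypothesis reads $a_0 \geq s$ and gives $a_0 + 1 - s \geq 1$. Thus both coefficients are genuine multinomials with all parts positive, and Lemma~\ref{lem:odd-multi} applies to each; since its bit-position criterion (ii) is symmetric in the parts, the ordering of the parts is irrelevant and no reshuffling is needed.

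First I would unpack oddness of $\binom{a_0+\cdots+a_n}{a_0,\ldots,a_n}$. By Lemma~\ref{lem:odd-multi} the sets $\BP(a_0), \ldots, \BP(a_n)$ are pairwise disjoint. In particular the $n$ summands of $s = a_1 + \cdots + a_n$ have pairwise disjoint bit-positions, so their sum is carry-free and $\BP(s) = \BP(a_1) \dotcup \cdots \dotcup \BP(a_n)$; as each $a_i \geq 1$ and there are $n \geq 2$ summands, $\BP(s)$ has at least two elements. Moreover $\BP(a_0)$ is disjoint from every $\BP(a_i)$ with $i \geq 1$, hence from $\BP(s)$. Next I would unpack oddness of $\binom{a_0+1}{a_0+1-s,a_1,\ldots,a_n}$: again Lemma~\ref{lem:odd-multi} makes $\BP(a_0+1-s), \BP(a_1), \ldots, \BP(a_n)$ pairwise disjoint, so $\BP(a_0+1-s)$ is disjoint from $\BP(s)$. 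Since $(a_0+1-s)+s = a_0+1$ is then carry-free (Corollary~\ref{cor:Kummer}), we get $\BP(a_0+1) = \BP(a_0+1-s) \dotcup \BP(s)$, and in particular $\BP(s) \subseteq \BP(a_0+1)$.

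The crux is to combine $\BP(s) \subseteq \BP(a_0+1)$ with $\BP(s) \cap \BP(a_0) = \emptyset$. Comparing the binary expansions of $a_0$ and $a_0+1$: letting $r$ be the least index \emph{not} lying in $\BP(a_0)$, adding one carries through positions $0, \ldots, r-1$ and sets position $r$, so that $\BP(a_0+1) \setminus \BP(a_0) = \{r\}$ while $\BP(a_0) \setminus \BP(a_0+1) = \{0, \ldots, r-1\}$. Consequently $\BP(s) \subseteq \BP(a_0+1) \setminus \BP(a_0) = \{r\}$ has at most one element, contradicting $\#\BP(s) \geq 2$ from the first step. This contradiction shows the two coefficients cannot both be odd, proving the lemma.

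The main obstacle is this final combinatorial step: correctly identifying the single bit-position by which $\BP(a_0)$ and $\BP(a_0+1)$ differ, and verifying that the inclusion $\BP(s) \subseteq \BP(a_0+1)$ together with $\BP(s) \cap \BP(a_0) = \emptyset$ really confines $\BP(s)$ to that one position. Everything preceding it is a direct translation through Lemma~\ref{lem:odd-multi}; the only care required elsewhere is checking that all parts are positive (so the lemma applies) and that the two additions in play are carry-free.
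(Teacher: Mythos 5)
Your proof is correct and follows essentially the same route as the paper's: both arguments assume oddness of both coefficients, use Lemma~\ref{lem:odd-multi} to get that $\BP(a_1+\cdots+a_n)$ has at least $n\geq 2$ elements, is disjoint from $\BP(a_0)$, and is contained in $\BP(a_0+1)$, and then derive a contradiction from how the binary expansions of $a_0$ and $a_0+1$ differ. Your phrasing via $\BP(a_0+1)\setminus\BP(a_0)=\{r\}$ is just a slightly cleaner packaging of the paper's case analysis on positions above and below $m$.
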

\begin{proof}
    Suppose $\binom{a_0 + \cdots + a_n}{a_0, \ldots, a_n}$ and $\binom{a_0 + 1}{a_0 + 1 - (a_1 + \cdots + a_n), a_1, \ldots, a_n}$ are odd.
    By Lemma~\ref{lem:odd-multi}, the $\BP(a_i)$ are disjoint for all $0 \leq i \leq n$, $\BP(a_0)$ and $\BP(a_1 + \cdots + a_n)$ are disjoint,
    and $\BP(a_0 + 1 - (a_1 + \cdots + a_n))$ and $\BP(a_1 + \cdots + a_n)$ are disjoint.  Thus we have that 
    $\BP(a_0 + 1) = \BP(a_0 + 1 - (a_1 + \cdots + a_n)) \dotcup \BP(a_1 + \cdots + a_n)$.    Notice that since $a_n \geq 1$, each $\BP(a_i)$ has at 
    least one element, and so $\BP(a_1 + \cdots + a_n)$ has at least $n$ elements.

    Suppose $\BP(a_0)$ contains $0, \ldots, m-1$ but not $m$.  Then for $k > m$, $k \in \BP(a_0 + 1)$ if and only if $k \in \BP(a_0)$.
    Moreover, $\BP(a_0 + 1)$ contains $m$ but not $0, \ldots, m-1$.  As $\BP(a_1 + \cdots + a_n) \subset \BP(a_0+1)$, and the former has at least
    $n \geq 2$ elements, there exists a $k \in \BP(a_1 + \cdots + a_n) \subset \BP(a_0+1)$ with $k > m$.  Thus $\BP(a_1 + \cdots + a_n)$ and $\BP(a_0)$
    have $k$ in common, contradicting $\BP(a_0)$ and $\BP(a_1 + \cdots + a_n)$ being disjoint.  This in turn contradicts $\binom{a_0 + \cdots + a_n}{a_0, \ldots, a_n}$
    being odd.
\end{proof}

As a corollary, we classify the strong Lefschetz property in characteristic two for all monomial complete intersections in at least three variables.

\begin{corollary} \label{cor:char-2-n-2}
    Let $d_0 \geq \cdots \geq d_n \geq 2$ with $n \geq 2$.  Then $R/I_{\tup{d}}$ fails to have the strong Lefschetz property in characteristic two.
\end{corollary}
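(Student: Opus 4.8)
**

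The goal is to show that $R/I_{\tup{d}}$ fails to have the strong Lefschetz property in characteristic two whenever $n \geq 2$ and $d_0 \geq \cdots \geq d_n \geq 2$. The plan is to reduce, via Proposition~\ref{pro:slp}, to exhibiting a single value of $k$ for which the auxiliary algebra $A_k := R[x_{n+1}]/I_{(\tup{d}, t-2k)}$ fails to have the weak Lefschetz property in characteristic two. Since failure at even one $k$ destroys the strong Lefschetz property, I only need to make a judicious choice of $k$ and then detect the weak Lefschetz failure through a divisibility-by-two condition on an appropriate (multi)nomial coefficient.

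First I would choose $k$ so that the new generating degree $t - 2k$ is as large as possible while keeping $A_k$ in the regime governed by Lemma~\ref{lem:large-top}; concretely, I want the top degree of the $(n+2)$-tuple defining $A_k$ to equal the sum of the remaining degrees minus the number of those variables, so that the associated matrix $M$ is $1\times 1$ and its determinant is a single multinomial coefficient. With that reduction, $A_k$ has the weak Lefschetz property in characteristic two if and only if that multinomial coefficient is odd. The natural coefficient to land on is $\binom{d_0 + \cdots + d_n}{d_0, \ldots, d_n}$ or a closely related shifted version, and here is where Lemma~\ref{lem:one-or-the-other} does the real work: it guarantees that for the pair of relevant multinomial coefficients, at least one is even. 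So whichever of the two corresponding values of $k$ I consider, one of them yields an even coefficient, hence a weak Lefschetz failure, hence a strong Lefschetz failure.

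I expect the main obstacle to be bookkeeping: matching the hypotheses $a_0 \geq a_1 + \cdots + a_n$ of Lemma~\ref{lem:one-or-the-other} to the actual degree data. The ordering $d_0 \geq \cdots \geq d_n$ need not satisfy $d_0 \geq d_1 + \cdots + d_n$, so I cannot feed the $d_i$ directly into Lemma~\ref{lem:one-or-the-other}. Instead I would set $a_i := d_i - 1$ for the lower-indexed variables and treat the introduced variable $x_{n+1}$ with its degree $t - 2k$ as the new top entry $a_0$; the inequality $a_0 \geq a_1 + \cdots + a_n$ then corresponds exactly to the constraint that makes Lemma~\ref{lem:large-top} applicable, and the two shifted coefficients in Lemma~\ref{lem:one-or-the-other} match $B_0$-type and $B_{b-2}$-type endpoints (as in the two-variable Corollary~\ref{cor:char-2-n-1}). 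Verifying that the two candidate values of $k$ both lie in the admissible range $0 \leq k \leq \flfr{t}{2}$, and that the multinomial coefficients produced are precisely $\det M$ for those $A_k$, is routine once the indices are aligned, but it is the step where sign-and-index errors are easiest to make.

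In summary, the skeleton is: (1) invoke Proposition~\ref{pro:slp} to trade the strong Lefschetz property for weak Lefschetz on the family $A_k$; (2) pick the two extreme admissible $k$ whose determinants $\det M$ are single (shifted) multinomial coefficients via Lemma~\ref{lem:large-top}; (3) apply Lemma~\ref{lem:one-or-the-other} to conclude one of these two coefficients is even; (4) conclude that the corresponding $A_k$ fails the weak Lefschetz property in characteristic two, so $R/I_{\tup{d}}$ fails the strong Lefschetz property. The hypothesis $n \geq 2$ enters exactly where Lemma~\ref{lem:one-or-the-other} needs $\BP(a_1 + \cdots + a_n)$ to have at least two elements; this is what fails for $n = 1$, consistent with the genuinely characteristic-two classification in Corollary~\ref{cor:char-2-n-1}.
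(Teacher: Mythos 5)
Your skeleton reproduces the paper's argument, but only in the regime $d_0 > \clfr{t}{2}$: there one takes the two endpoints $k=0$ and $k=t-d_0$, Lemma~\ref{lem:large-top} turns the corresponding determinants into $\binom{t}{d_0-1,\ldots,d_n-1}$ and $\binom{d_0}{d_1-1,\ldots,d_n-1,\,2d_0-t-1}$, and Lemma~\ref{lem:one-or-the-other} applied to $a_i = d_i-1$ forces one of them to be even. The genuine gap is the complementary case $d_0 \leq \clfr{t}{2}$. The hypothesis $a_0 \geq a_1+\cdots+a_n$ of Lemma~\ref{lem:one-or-the-other}, with $a_i=d_i-1$, is precisely $d_0-1 \geq (d_1-1)+\cdots+(d_n-1)$, i.e.\ $d_0 > \clfr{t}{2}$; it is \emph{not} ``the constraint that makes Lemma~\ref{lem:large-top} applicable,'' and your proposed reindexing (taking the new variable's degree $t-2k$ as the top entry $a_0$ and the $d_i-1$ as the remaining parts) cannot rescue it: that inequality would read $t-2k \geq \sum_i(d_i-1) = t$, forcing $k=0$, and the pair of coefficients Lemma~\ref{lem:one-or-the-other} then produces is not the pair of determinants of any two algebras $A_k$. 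Worse, when $d_0 < \clfr{t}{2}$ the second endpoint $k=t-d_0$ exceeds $\flfr{t}{2}$ and the degree $t-2k = 2d_0-t$ is not even positive, so there is no second coefficient to pair against the first. Your proof therefore covers only half of the statement.

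The paper closes the small-$d_0$ case by an entirely different mechanism: if $d_0 \leq \clfr{t}{2}$ then $2d_0 \leq t+1$, so the interval $[d_0, t]$ contains a power of $2$, and Theorem~\ref{thm:slp}(i) already rules out the strong Lefschetz property in characteristic two. (One could instead salvage your single-coefficient idea by noting, via Lemma~\ref{lem:odd-multi}, that if $\binom{t}{d_0-1,\ldots,d_n-1}$ is odd then the sets $\BP(d_i-1)$ are pairwise disjoint, which forces $d_0-1 > (d_1-1)+\cdots+(d_n-1)$; hence in the small-$d_0$ regime $B_0$ alone fails the weak Lefschetz property. But that observation is not in your write-up, and as written the argument does not go through.) Your closing remark about where $n \geq 2$ enters is correct and matches the paper.
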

\begin{proof}
    If $d_0 \leq \clfr{t}{2}$, then $\frac{t+1}{d_0} \geq 2$ and so $d_0 \leq 2^m \leq t$, for some $m \in \NN$.  Thus, by Theorem~\ref{thm:slp}
    $R/I_{\tup{d}}$ fails to have the strong Lefschetz property in characteristic two.

    Set $\ell := x_0 + \cdots + x_n$ and $B_k := R[x_{n+1}]/I_{(\tup{d}, t-2k)}$.  Recall that by Proposition~\ref{pro:slp}, $R/I_{\tup{d}}$ has the
    strong Lefschetz property if and only if each $B_k$ has the weak Lefschetz property, for $0 \leq k \leq \flfr{t}{2}$. 

    Suppose $d_0 > \clfr{t}{2}$, that is, $d_0 \geq d_0 + \cdots + d_n - n$.  Notice $B_0$ has the weak Lefschetz property in characteristic $2$
    if and only if $\binom{t}{d_0 - 1, \ldots, d_n - 1}$ is odd.  Further, $t-d_0 \leq \flfr{t}{2}$ as $d_0 > \clfr{t}{2}$, and $B_{t-d_0}$ has
    the weak Lefschetz property in characteristic $2$ if and only if $\binom{d_0}{d_1 - 1, \ldots d_n - 1, d_0 - (d_1 + \cdots + d_n - n)}$ is odd 
    (notice that $t - 2(t-d_0) = 2d_0 - t = d_0 + 1 - (d_1 + \cdots + d_n - n)$).

    By Lemma~\ref{lem:one-or-the-other}, $\binom{t}{d_0 - 1, \ldots, d_n - 1}$ is even or $\binom{d_0}{d_1 - 1, \ldots d_n - 1, d_0 - (d_1 + \cdots + d_n - n)}$
    is even, thus $B_0$ or $B_{t-d_0}$ fails to have the weak Lefschetz property in characteristic $2$.  Hence, $R/I_{\tup{d}}$ fails to have the 
    strong Lefschetz property in characteristic two.
\end{proof}

\subsection{Generation in a single degree}

In this subsection, we consider monomial complete intersections generated by monomials of the same degree, that is, $d_0 = \cdots = d_n = d \geq 2$.  Notice
that the socle degree is $(n+1)(d-1)$.

The case when $n = 1$ is handled in Section~\ref{sec:two}.  In particular, the weak Lefschetz property is classified in Proposition~\ref{pro:2-wlp},
and the strong Lefschetz property is classified in Theorem~\ref{thm:slp-dd}.  Brenner and Kaid~\cite[Theorem~2.6]{BK-p} classify the weak Lefschetz
property when $n = 2$.  We note that Kustin, Rahmati, and Vraciu~\cite{KRV} relate this result to the projective dimension of 
$K[x,y,z]/(x^d, y^d, z^d) \colon (x^n+y^n+z^n)$.  Kustin and Vraciu~\cite[Theorem~4.3]{KV} classify the weak Lefschetz property when $n = 3$.
Further still, Kustin and Vraciu~\cite{KV} prove the surprising classification of the weak Lefschetz property when $n \geq 4$.  We recall the last here,
as we will use it.

\begin{theorem}{\cite[Theorem~6.4]{KV}} \label{thm:d-fixed-many-var}
    Let $d \geq 2$ and $n \geq 4$.  Then $R/I_{(d, \ldots, d)}$ has the weak Lefschetz property if and only if the
    characteristic of $K$ is $0$ or greater than $\clfr{(n+1)(d-1)}{2}$.
\end{theorem}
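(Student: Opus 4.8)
The plan is to run the whole argument through the minimal-generator-degree criterion of Proposition~\ref{pro:mgd-wlp}, since the Kustin--Vraciu syzygy machinery is exactly suited to equal-degree generation in many variables. Write $t = (n+1)(d-1)$ for the socle degree and $\ell = x_1 + \cdots + x_n \in S = K[x_1, \ldots, x_n]$, and set $\tup{d} = (d, \ldots, d)$, so that $\phi_{\tup{d}}$ is given by $[\ell^d, x_1^d, \ldots, x_n^d]$. Because $\flfr{t+3}{2} = \clfr{t}{2}+1$, Proposition~\ref{pro:mgd-wlp} tells us that $R/I_{(d,\ldots,d)}$ fails to have the weak Lefschetz property precisely when $\overline{\syz}(\tup{d})$ contains a non-Koszul element of degree at most $\clfr{t}{2}$. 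The proof then splits according to whether a power of $p$ lands in the window $[d, \clfr{t}{2}]$; note that $n \geq 4$ forces $d \leq \clfr{t}{2}$, since $\clfr{t}{2} \geq \clfr{5(d-1)}{2} \geq d$ for $d \geq 2$, so this window is always nonempty.

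For the positive direction, characteristic zero is immediate from Theorem~\ref{thm:char-zero}. For a prime $p > \clfr{t}{2}$ I would appeal to Proposition~\ref{pro:wlp-naive}(ii), which settles every $p > \clfr{t+1}{2}$ at once; this already finishes the case of odd $t$, because then $\clfr{t+1}{2} = \clfr{t}{2}$. When $t$ is even only the single boundary prime $p = \tfrac{t}{2}+1$ remains, and I would dispatch it by reducing, via Corollary~\ref{cor:wlp-even}, to the augmented algebra $R[x_{n+1}]/I_{(\tup{d},2)}$, whose socle degree $t+1$ is odd and for which the weak Lefschetz property is governed directly by the nonvanishing of the associated determinant; I would then check through the explicit rising-factorial and factorial-ratio factors in Proctor's formula (Theorem~\ref{thm:det}) that $\tfrac{t}{2}+1$ never divides that determinant in the equal-degree setting.

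The substance of the theorem, and the reason it is clean only for $n \geq 4$, is the failure direction: every prime $p \leq \clfr{t}{2}$ must destroy the weak Lefschetz property. If some power $p^m$ lies in $[d, \clfr{t}{2}]$, then Proposition~\ref{pro:wlp-naive}(i) already yields failure with no further work. The remaining, genuinely hard, primes are the \emph{gap primes}, those with $p^e < d \leq \clfr{t}{2} < p^{e+1}$ for some $e$, so that no single power of $p$ falls in the window. For these I would construct an explicit non-Koszul syzygy of degree at most $\clfr{t}{2}$ following the pattern of the proof of Proposition~\ref{pro:dddd-3}: starting from the Frobenius identity $\ell^{p^e} = x_1^{p^e} + \cdots + x_n^{p^e}$ valid in characteristic $p$, I would take $p$-th powers of building blocks analogous to the $f_k, g_k$ of Lemma~\ref{lem:std-syz} (now generalised to $n$ variables) and multiply by monomials $x_1^{a_1} \cdots x_n^{a_n} \ell^{b}$, choosing the exponents from the base-$p$ expansion of $d$ so that the result lies in $\ker \phi_{\tup{d}}$, is not a Koszul combination, and has total degree at most $\clfr{t}{2}$.

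I expect the gap-prime construction to be the main obstacle, and it is precisely where the hypothesis $n \geq 4$ is used: the extra variables supply enough monomial freedom to keep the syzygy degree at or below $\clfr{t}{2}$ even when no power of $p$ lands in $[d, \clfr{t}{2}]$, whereas for $n \leq 3$ this freedom runs out and genuine exceptional characteristics survive (compare the exceptional triples noted after Proposition~\ref{pro:dddd-3}). The delicate bookkeeping is twofold and must be carried out in each residue class of $d$ modulo the relevant power of $p$: verifying membership in $\overline{\syz}(\tup{d})$, that is, that the tuple annihilates $[\ell^d, x_1^d, \ldots, x_n^d]$ and is non-Koszul, and simultaneously confirming the degree bound. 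Showing that both can always be arranged for $n \geq 4$, together with the boundary-prime check above, completes the classification.
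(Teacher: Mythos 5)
There is a structural point you should be aware of first: the paper does not prove this statement at all. It is quoted verbatim as \cite[Theorem~6.4]{KV}, i.e.\ it is Kustin and Vraciu's theorem, imported as a black box and used later (e.g.\ in Corollary~\ref{cor:d-fixed-slp} and Lemma~\ref{lem:d-odd-fixed}). So there is no in-paper proof to compare against; what you have written is a proposed proof of an external result, and it has to be judged on whether it would actually close.

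As it stands it would not, for two concrete reasons. First, in the positive direction, the boundary prime $p = \tfrac{t}{2}+1$ for even $t=(n+1)(d-1)$ is not actually dispatched. Proposition~\ref{pro:wlp-naive}(ii) only reaches $p > \clfr{t+1}{2} = \tfrac{t}{2}+1$, and the paper itself flags exactly this case as open in general (Conjecture~\ref{conj:wlp-even-gap}), proving it only for $n=2$ in the closing remark. Your plan to reduce via Corollary~\ref{cor:wlp-even} to $R[x_{n+1}]/I_{(\tup{d},2)}$ and then ``check'' Proctor's formula is not a routine verification: for that augmented algebra the peak is $s+1 = \tfrac{t}{2}+1 = p$, so $p$ genuinely occurs as a factor of the top rising factorial $\rf{s+2-d_0}{d_0}$ (and in the factorial ratios), and whether it divides the determinant depends on the exponents $\delta_i$ and on cancellation. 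That computation is the content you would need to supply. Second, and more seriously, the entire failure direction for the ``gap primes'' with $p^e < d \leq \clfr{t}{2} < p^{e+1}$ is described only as an intention --- ``I would take $p$-th powers of building blocks analogous to $f_k, g_k$ \ldots choosing the exponents from the base-$p$ expansion of $d$.'' This is precisely the hard part of Kustin--Vraciu's argument, occupying most of their paper; without the explicit tuples, the verification that they lie in $\ker\phi_{\tup{d}}$, are non-Koszul, and meet the degree bound $\clfr{t}{2}$ uniformly in $n\geq 4$, $d$, and $p$, you have a roadmap rather than a proof. Your framing of the problem (the $\mgd$ criterion, the window $[d,\clfr{t}{2}]$, and the identification of where $n\geq 4$ enters) is sound and consistent with how the literature attacks this, but the two steps above are exactly where the theorem lives.
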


As a corollary of the above theorem, we get a classification of the strong Lefschetz property when $n \geq 4$.

\begin{corollary} \label{cor:d-fixed-slp}
    Let $d \geq 2$ and $n \geq 4$.  Then $R/I_{(d, \ldots, d)}$ has the strong Lefschetz property if and only if the
    characteristic of $K$ is $0$ or greater than $(n+1)(d-1)$.
\end{corollary}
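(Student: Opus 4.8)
The plan is to combine Proposition~\ref{pro:slp}, which converts the strong Lefschetz property into a statement about the weak Lefschetz property of a family of auxiliary algebras, with the classification of the weak Lefschetz property in Theorem~\ref{thm:d-fixed-many-var}. Recall that $\tup{d} = (d, \ldots, d)$ has socle degree $t = (n+1)(d-1)$. By Proposition~\ref{pro:slp}, $R/I_{(d,\ldots,d)}$ has the strong Lefschetz property if and only if $A_k := R[x_{n+1}]/I_{(d, \ldots, d, t-2k)}$ has the weak Lefschetz property for every $0 \leq k \leq \flfr{t}{2}$. The key observation is that each $A_k$ is a monomial complete intersection in $n+2$ variables, with $n+1$ of its generating degrees equal to $d$ and the remaining one equal to $t - 2k$.

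First I would dispatch the forward direction (failure) using Theorem~\ref{thm:slp}(ii) together with Theorem~\ref{thm:slp}(i): if $p \leq (n+1)(d-1) = t$, then, provided $p$ falls in one of the ranges of Theorem~\ref{thm:slp}(i), the strong Lefschetz property already fails. For a clean statement, though, I would prefer to argue the failure directly from Theorem~\ref{thm:d-fixed-many-var}: taking $k$ so that $t - 2k = d$ (or as close as parity allows) realises $A_k$ as $R[x_{n+1}]/I_{(d,\ldots,d)}$ in $n+2 \geq 6$ variables, whose weak Lefschetz property fails exactly when the characteristic is positive and at most $\clfr{(n+2)(d-1)}{2}$. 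Comparing this threshold against $(n+1)(d-1)$ should show that whenever $0 < p \leq (n+1)(d-1)$, at least one $A_k$ fails to have the weak Lefschetz property, hence the strong Lefschetz property fails.

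For the converse (presence), I would fix $p > (n+1)(d-1) = t$ and verify that every $A_k$, for $0 \leq k \leq \flfr{t}{2}$, has the weak Lefschetz property. Each $A_k$ has $n+1$ generators of degree $d$ and one of degree $t-2k \leq t$, so it is \emph{not} generated in a single degree and Theorem~\ref{thm:d-fixed-many-var} does not apply verbatim. Instead I would invoke Theorem~\ref{thm:slp}(ii) applied to the algebra $R/I_{(d,\ldots,d)}$ itself: since $p > t$, that theorem directly gives the strong Lefschetz property. This is in fact the cleanest route and makes the whole presence half immediate, so the proof need only record it.

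The main obstacle is therefore entirely in the failure direction, specifically in confirming that the interval $0 < p \leq (n+1)(d-1)$ is \emph{completely} covered by the failure criteria, with no gap between the range $\max\{d_1, 2d_0 - t\} \leq p \leq d_0$ and the range $d_0 \leq p^m \leq t$ of Theorem~\ref{thm:slp}(i). Here $d_0 = d$ and $d_1 = d$, so the first range is $d \leq p \leq d$ at the prime power level and the union of the two ranges must be checked to reach every prime $p \leq t$; for small primes $p < d$ one relies on the existence of a power $p^m$ with $d \leq p^m \leq t$, which holds precisely because $t/d = (n+1)(d-1)/d \geq 2$ when $n \geq 4$ and $d \geq 2$, guaranteeing room for such a power. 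I would verify this arithmetic inequality carefully, as it is the crux that closes the gap and yields the stated bound $(n+1)(d-1)$ exactly.
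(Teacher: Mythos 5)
Your presence direction is fine and is exactly what the paper does: for $p > t = (n+1)(d-1)$, Theorem~\ref{thm:slp}(ii) gives the strong Lefschetz property immediately. The failure direction, however, has a genuine gap, and both routes you sketch for it break down on the same family of primes. Your preferred route picks $k$ with $t-2k = d$, so that $A_k$ becomes $R[x_{n+2}]/I_{(d,\ldots,d)}$ in $n+2$ variables, and applies Theorem~\ref{thm:d-fixed-many-var} to it. Setting aside the parity problem (such a $k$ exists only when $n(d-1)$ is odd; otherwise $A_k$ has one generator of degree $d \pm 1$ and the theorem does not apply to it), this yields failure only for $0 < p \leq \clfr{(n+2)(d-1)}{2}$, and that threshold is strictly less than $(n+1)(d-1)$ whenever $n(d-1) > 2$, i.e., always here — it covers only about half the required interval. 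Your fallback, Theorem~\ref{thm:slp}(i), rests on the claim that $t/d \geq 2$ guarantees a power $p^m$ with $d \leq p^m \leq t$ for every prime $p < d$. That is false: what one actually needs is roughly $pd \leq t$, i.e., $p$ at most about $n+1$, not $t \geq 2d$. Concretely, for $n = 4$ and $d = 10$ one has $t = 45$, and the prime $p = 7$ has no power in $[10,45]$ (since $7 < 10 < 45 < 49$); neither range of Theorem~\ref{thm:slp}(i) applies, so that theorem alone does not give failure at $p = 7$.

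The missing ingredient — and the paper's entire one-line proof of the failure direction — is to apply Theorem~\ref{thm:d-fixed-many-var} to $R/I_{(d,\ldots,d)}$ \emph{itself} rather than to an auxiliary $A_k$: since the strong Lefschetz property implies the weak one, and the weak Lefschetz property of $R/I_{(d,\ldots,d)}$ fails for all positive $p \leq \clfr{(n+1)(d-1)}{2}$, the strong property fails for all such $p$ (this disposes of $p = 7$ in the example above). For the remaining primes $\clfr{t}{2} < p \leq t$, note that $\clfr{t}{2} \geq d$ (because $5(d-1) \geq 2d$ for $d \geq 2$), so $d \leq p = p^1 \leq t$ and Theorem~\ref{thm:slp}(i) applies with $m = 1$. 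These two ranges together cover every positive prime $p \leq (n+1)(d-1)$, which is what your argument needs and does not establish.
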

\begin{proof}   
    As the strong Lefschetz property implies the weak Lefschetz property, we combine Theorems
    \ref{thm:slp} and \ref{thm:d-fixed-many-var} to verify the claim.
\end{proof}

Before classifying the strong Lefschetz property for $n = 3$, we prove a more general lemma regarding the 
weak Lefschetz property and monomial complete intersections generated by monomials all having the same degree, except one.

\begin{lemma} \label{lem:d-odd-fixed}
    Let $\tup{d} \in \NN^{n+1}$, where $d \geq 3$, $n \geq 4$, $d_0 = \cdots = d_{n-1} = d$ and $d_n = d-1$.  If $d$ or $n$ is odd, and 
    $2 \leq p < d$, then $R/I_{\tup{d}}$ fails to have the weak Lefschetz property.
\end{lemma}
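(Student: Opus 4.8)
The plan is to work with the minimal degree syzygy criterion of Proposition~\ref{pro:mgd-wlp}, supplemented by the power-of-$p$ obstruction of Proposition~\ref{pro:wlp-naive}, in direct analogy with the three-variable computation of Proposition~\ref{pro:dddd-3}. First I would record the basic numerics. The socle degree is $t = nd + (d-1) - (n+1) = (n+1)(d-1) - 1$, and the parity hypothesis is exactly the statement that $t$ is odd: indeed $t$ is odd if and only if $(n+1)(d-1)$ is even, i.e.\ if and only if $n$ is odd or $d$ is odd. Moreover $d_0 = d \leq \clfr{t}{2}$, since $n \geq 4$ and $d \geq 3$ give $(n+1)(d-1) \geq 5(d-1) \geq 2d+1$; thus we are in the regime where both Proposition~\ref{pro:wlp-naive} and (because $t$ is odd) the determinant criterion of Section~\ref{sec:bound} apply. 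Since $t$ is odd, Proposition~\ref{pro:mgd-wlp} tells us that $R/I_{\tup{d}}$ fails to have the weak Lefschetz property as soon as $\overline{\syz}(\tup{d})$ contains a non-Koszul syzygy of degree at most $\flfr{t+1}{2} = \frac{(n+1)(d-1)}{2}$.

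The primes $p$ split into two families. For those $p$ admitting a power $p^m$ with $d \leq p^m \leq \clfr{t}{2}$, Proposition~\ref{pro:wlp-naive}(i) immediately gives failure of the weak Lefschetz property; this disposes of all sufficiently small primes. The remaining primes are those $2 \leq p < d$ for which the least power of $p$ exceeding $d$ already overshoots $\clfr{t}{2}$ — these cluster just below $d$ (roughly those $p$ with $p^2 > \clfr{t}{2}$). For this second family I would exhibit explicit non-Koszul syzygies, following the template of Lemma~\ref{lem:std-syz} and Proposition~\ref{pro:dddd-3}: write $2d = qp + r$ with $0 \leq r < p$, and split into cases according to the parity of $q$ (equivalently of $r$, since $p$ is odd). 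In each case I would build a syzygy of $\syz(\tup{d})$ whose entries are monomials in $x_1, \ldots, x_n$ and in $x_1 + \cdots + x_n$ times a $p$-th power (a Frobenius twist) of suitably generalised analogues of the polynomials $f_k, g_k$ from Lemma~\ref{lem:std-syz}. The reason Frobenius twisting works is that $(x_1 + \cdots + x_n)^{d}$ collapses under the $p$-power map in a way that produces a relation among the generators; the parity of $t$ (hence of $q$) is precisely what makes the degree land on the threshold $\frac{(n+1)(d-1)}{2}$ in the tight subcases. Once each candidate is verified to lie in $\syz(\tup{d})$, to have degree at most $\frac{(n+1)(d-1)}{2}$, and to be non-Koszul (by observing that its final coordinate is a polynomial not contained in the monomial ideal $(x_1^{d_1}, \ldots, x_n^{d_n})$), Proposition~\ref{pro:mgd-wlp} completes the argument.

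The main obstacle is the syzygy construction itself in the many-variable setting $n \geq 4$. The polynomials $f_k$ and $g_k$ of Lemma~\ref{lem:std-syz} exploit the three-variable identity $g_k = \frac{x^k - (x+y+z)^k}{y+z}$, and finding the right $n$-variable replacements — entries whose product against $\bigl[(x_1+\cdots+x_n)^d,\, x_1^d, \ldots, x_{n-1}^d,\, x_n^{d-1}\bigr]$ vanishes, while keeping the total degree within $\frac{(n+1)(d-1)}{2}$ across all residues $r = 2d \bmod p$ — is the delicate part, exactly as the multi-case analysis of Proposition~\ref{pro:dddd-3} is the heart of that proof. As an alternative route for the same primes one could instead use the determinant: since $t$ is odd, the weak Lefschetz property fails if and only if $p \mid \det M_{\tup{d}}$, and by Theorem~\ref{thm:det} each factor $\rf{i+1}{d_0} = (i+1)\cdots(i+d)$ is a product of $d$ consecutive integers, hence divisible by $d!$ and so by every prime $p < d$; the exponents $\delta_{s+1-d_0-i}$ are nonnegative because $s+1-d_0$ lies below the peak of the symmetric unimodal Hilbert function of $K[x_1,\ldots,x_n]/(x_1^{d_1},\ldots,x_n^{d_n})$. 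The difficulty in this alternative is that the rational prefactor $\frac{\prod_{\tup{a}} \tup{a}!}{\prod_{\tup{b}} \tup{b}!}$ can carry $p$ in its denominator, so one must run a careful Legendre/Kummer valuation count to confirm that the net $p$-adic valuation of $\det M_{\tup{d}}$ remains positive; a small example ($n=4$, $d=3$) already shows the prefactor contributing negatively while the Proctor product dominates, so the bookkeeping, though feasible, is not automatic.
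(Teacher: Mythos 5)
Your proposal has a genuine gap: the entire content of the lemma is left unexecuted. You correctly set up the numerics (the parity hypothesis is exactly that the socle degree $t=(n+1)(d-1)-1$ is odd, and the failure threshold from Proposition~\ref{pro:mgd-wlp} is a non-Koszul syzygy of degree at most $\frac{(n+1)(d-1)}{2}$), and the primes with $d\le p^m\le\clfr{t}{2}$ are indeed killed by Proposition~\ref{pro:wlp-naive}(i). But for the remaining primes you propose to build explicit Frobenius-twisted syzygies in $n+1\ge 5$ variables ``following the template of Lemma~\ref{lem:std-syz} and Proposition~\ref{pro:dddd-3},'' and you yourself flag that finding the $n$-variable analogues of $f_k$ and $g_k$ is ``the delicate part.'' That construction is not a routine adaptation --- it is essentially the hard multi-case analysis that Kustin and Vraciu carry out to prove Theorem~\ref{thm:d-fixed-many-var} --- and without it the proof does not exist. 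The alternative determinant route is likewise left as unfinished bookkeeping, and you correctly note the prefactor can carry $p$ in its denominator, so it cannot be waved through.

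The idea you are missing is that no new syzygy needs to be constructed at all: the paper transfers one from the equigenerated case. Since Theorem~\ref{thm:d-fixed-many-var} (used as a black box) says $R/I_{(d,\ldots,d)}$ fails the weak Lefschetz property for all $2\le p<d$ when $n\ge 4$, Proposition~\ref{pro:mgd-wlp} already guarantees a non-Koszul syzygy $\alpha=(z_0,\ldots,z_n)$ of $\tup{e}=(d,\ldots,d)$ of degree less than $\flfr{(n+1)(d-1)+3}{2}=\frac{(n+1)(d-1)+2}{2}$. Replacing the last coordinate by $x_nz_n$ yields a syzygy $\alpha'$ of $(d,\ldots,d,d-1)$ of the \emph{same} degree (the generator degree drops by one while the coefficient degree rises by one), and choosing the representative with $z_0\notin(x_1^d,\ldots,x_{n-1}^d,x_n^{d-1})$ keeps it non-Koszul. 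The parity hypothesis enters only to ensure that the threshold $\flfr{t+3}{2}$ for the new degree tuple equals the old one, so the transferred syzygy still violates Proposition~\ref{pro:mgd-wlp}. This two-line reduction replaces the entire case analysis you were planning.
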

\begin{proof}
    Set $\tup{e} = (d, \ldots, d) \in \NN^{n+1}$.

    By Theorem~\ref{thm:d-fixed-many-var}, $R/I_{\tup{e}}$ fails the weak Lefschetz property for $2 \leq p < d$.
    Thus, by Proposition~\ref{pro:mgd-wlp}, $\mgd \overline{\syz} \tup{d} < \flfr{(n+1)(d-1)+3}{2}$.  Notice that 
    $\flfr{(n+1)(d-1)+3}{2} = \frac{(n+1)(d-1)+2}{2}$ as $(n+1)(d-1)+3$ is odd.

    Let $\alpha = (z_0, \ldots, z_n)$ be a homogeneous representative of a nonzero syzygy in $\overline{\syz} \tup{e}$
    of degree $\mgd \overline{\syz} \tup{e}$ such that $z_0 \not\in (x_1^d, \ldots, x_n^d)$.  Without loss of generality
    we may further assume $x_n^{d-1}$ does not divide $z_0$ (otherwise, the degree of $\alpha$ would be at least $(n+1)(d-1)$,
    which is larger than $\frac{(n+1)(d-1)+2}{2}$), that is, $z_0 \not\in (x_1^d, \ldots, x_{n-1}^d, x_n^{d-1})$.

    Then $\alpha' = (z_0, \ldots, x_nz_n)$ is a homogeneous nonzero syzygy in $\syz \tup{d}$.  Further, as $z_0$ is
    not a member of $(x_1^d, \ldots, x_{n-1}^d, x_n^{d-1})$, and all relations in $\Kos \tup{d}$ must have a $S$-linear combination of
    $x_1^d, \ldots, x_{n-1}^d, x_n^{d-1}$ in the first entries, then $\alpha'$ is not in $\Kos \tup{d}$.  Thus, $\alpha'$ is
    a homogeneous representative of a nonzero syzygy in $\overline{\syz} \tup{d}$.  

    Notice, the degree of $\alpha'$ is the degree of $\alpha$, and is strictly bounded above by $\frac{(n+1)(d-1)+2}{2}$.  Hence,
    $\mgd \overline{\syz} \tup{d} < \frac{(n+1)(d-1)+2}{2}$.  Notice though, $\flfr{n(d-1) + (d-2) + 3}{2} = \frac{(n+1)(d-1)+2}{2}$.  
    Therefore, by~\ref{pro:mgd-wlp}, $R/I_{\tup{d}}$ fails to have the weak Lefschetz property for $2 \leq p < d$.
\end{proof}

From this we get a classification of the strong Lefschetz property when $n = 3$.

\begin{proposition} \label{pro:slp-dddd}
    Let $d \geq 2$.  Then $R/I_{(d, d, d, d)}$ has the strong Lefschetz property if and only if the
    characteristic of $K$ is $0$ or greater than $4(d-1)$.
\end{proposition}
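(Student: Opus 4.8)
The plan is to combine the \emph{a priori} bound of Theorem~\ref{thm:slp} with the reduction to the weak Lefschetz property supplied by Proposition~\ref{pro:slp} (applied with $n=3$). Writing $t = 4(d-1)$ for the socle degree, Theorem~\ref{thm:slp}(ii) immediately gives the strong Lefschetz property when $p > t = 4(d-1)$, while characteristic zero is covered by Theorem~\ref{thm:char-zero}; this settles the ``if'' direction. For the ``only if'' direction, Theorem~\ref{thm:slp}(i) already disposes of every prime $p$ with $d \le p \le 4(d-1)$: here $d_0 = d_1 = d$ and $2d_0 - t = 4 - 2d \le 0$, so taking $m = 1$ in the condition $d_0 \le p^m \le t$ shows that $R/I_{(d,d,d,d)}$ fails the strong Lefschetz property for such $p$. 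Thus the entire remaining content reduces to showing that the strong Lefschetz property fails for every prime $p$ with $2 \le p < d$.

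To handle $2 \le p < d$, I would use Proposition~\ref{pro:slp}: it suffices to exhibit, for a single $k$ with $0 \le k \le \flfr{t}{2} = 2(d-1)$, that the five-variable algebra $R[x_4]/I_{(d,d,d,d,\,t-2k)}$ fails the weak Lefschetz property. The idea is to choose $k$ so that the extra generating degree $t-2k$ lands on a value already classified for $n = 4$. When $d$ is even, set $k = \tfrac{3d-4}{2}$ (an integer, and one checks $0 \le k \le 2(d-1)$); then $t - 2k = d$, so the relevant algebra is $R[x_4]/I_{(d,d,d,d,d)}$, and Theorem~\ref{thm:d-fixed-many-var} says it fails the weak Lefschetz property for $2 \le p \le \clfr{5(d-1)}{2}$. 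Since $d \le \clfr{5(d-1)}{2}$ for $d \ge 2$, every $p$ with $2 \le p < d$ lies in the failing range. When $d$ is odd (so $d \ge 3$), set $k = \tfrac{3(d-1)}{2}$ (again an integer with $0 \le k \le 2(d-1)$); then $t - 2k = d-1$, so the relevant algebra is $R[x_4]/I_{(d,d,d,d,d-1)}$, which is precisely the setting of Lemma~\ref{lem:d-odd-fixed} with $n = 4$. As $d$ is odd, the hypothesis ``$d$ or $n$ is odd'' holds, and the lemma gives failure of the weak Lefschetz property for $2 \le p < d$.

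In either case Proposition~\ref{pro:slp} then transfers the failure back to $R/I_{(d,d,d,d)}$, completing the ``only if'' direction. The parity split is forced and is the crux of the argument: because $t$ is even, $t-2k$ is always even, so for even $d$ one can reach the equigenerated degree $d$ but never $d-1$, whereas for odd $d$ one reaches $d-1$ but never $d$, and the two targets therefore cover exactly the two parities. I expect the main (if modest) obstacle to be the bookkeeping that the chosen $k$ lands in $[0, 2(d-1)]$ and that the reduced tuples genuinely satisfy the hypotheses of Theorem~\ref{thm:d-fixed-many-var} and Lemma~\ref{lem:d-odd-fixed}; a final observation is that $d = 2$ needs no separate treatment, since there the range $2 \le p < d$ is empty and all failing characteristics $p \in \{2,3\}$ were already handled by Theorem~\ref{thm:slp}.
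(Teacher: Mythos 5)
Your proposal is correct and follows essentially the same route as the paper: dispose of $p \ge d$ and $p > 4(d-1)$ via Theorem~\ref{thm:slp}, then for $2 \le p < d$ use Proposition~\ref{pro:slp} with the same choices $k = \tfrac{3d-4}{2}$ (even $d$, reducing to Theorem~\ref{thm:d-fixed-many-var}) and $k = \tfrac{3(d-1)}{2}$ (odd $d$, reducing to Lemma~\ref{lem:d-odd-fixed}). The only additions are your explicit bookkeeping of the range of $k$ and the observation that $d=2$ requires no separate treatment, both of which are accurate.
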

\begin{proof}   
    By Theorem~\ref{thm:slp}, we need only to consider $2 \leq p < d$.

    By Proposition~\ref{pro:slp}, $R/I_{(d, d, d, d)}$ fails the strong Lefschetz property if $R[z]/I_{(d,d,d,d,4d-4-2k)}$
    fails to have the weak Lefschetz property for some $0 \leq k \leq 2d-2$.  
    
    Suppose $d$ is even, then $4d-4-2k = d$ when $k = \frac{3d-4}{2} < 2d-2$.  Thus, using Theorem~\ref{thm:d-fixed-many-var} 
    we see that $R/I_{(d, d, d, d)}$ fails to have the strong Lefschetz property for $p \leq \clfr{5(d-1)}{2}$.  
    As $d < \clfr{5(d-1)}{2}$ for all $d$, then the claim holds.

    Suppose $d$ is odd, then $4d-4-2k = d-1$ when $k = \frac{3d-3}{2} < 2d - 2$. Thus, using Lemma~\ref{lem:d-odd-fixed} we 
    see that $R/I_{(d, d, d, d)}$ fails to have the strong Lefschetz property for $2 \leq p < d$.  
\end{proof}

\section{Conclusions} \label{sec:conclusions}

We combine Corollaries~\ref{cor:char-2-n-1} and~\ref{cor:char-2-n-2} to get the following theorem classifying the 
strong Lefschetz property for monomial complete intersections in characteristic two.

\begin{theorem} \label{thm:classify-char-two}
    Let $d_0 \geq \cdots \geq d_n \geq 2$ with $n \geq 1$, and let $I = (x_0^{d_0}, \ldots, x_n^{d_n}) \subset R = K[x_0, \ldots, x_n]$,
    where $K$ is an infinite field of characteristic two.  Then $R/I$ has the strong Lefschetz property if and only if
    $n = 1$ and either (i) $d_0$ is odd and $d_1 = 2$ or (ii) $d_0 = 4k + 2$ for some $k \in \NN$ and $d_1 = 3$.
\end{theorem}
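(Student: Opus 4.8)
The plan is to reduce Theorem~\ref{thm:classify-char-two} entirely to the two corollaries cited in the preamble, namely Corollary~\ref{cor:char-2-n-1} (the two-variable case $n=1$) and Corollary~\ref{cor:char-2-n-2} (the case $n\geq 2$), and then translate their failure conditions into the positive characterisation demanded here. First I would dispatch the case $n\geq 2$: Corollary~\ref{cor:char-2-n-2} states that for $n\geq 2$ the algebra $R/I_{\tup{d}}$ \emph{always} fails the strong Lefschetz property in characteristic two. Hence no configuration with three or more variables contributes to the ``if and only if'' list, which is exactly why the theorem asserts $n=1$ as a necessary condition. This is immediate and requires no computation.

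The substantive work is the case $n=1$, where I would invoke Corollary~\ref{cor:char-2-n-1} and simply complement its failure conditions. That corollary says $R/I_{(a,b)}$ (with $a\geq b\geq 2$) \emph{fails} the strong Lefschetz property in characteristic two precisely when one of the following holds: (i) $b=2$ and $a$ even, (ii) $b=3$ and $a\not\equiv 2\pmod 4$, or (iii) $b\geq 4$. Therefore $R/I_{(a,b)}$ \emph{has} the strong Lefschetz property exactly when none of these hold. Negating each: condition (iii) forces $b\leq 3$; condition (i) then forces that if $b=2$ we need $a$ odd; and condition (ii) forces that if $b=3$ we need $a\equiv 2\pmod 4$, i.e.\ $a=4k+2$ for some $k$. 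Writing $d_0=a$ and $d_1=b$, this yields precisely the two alternatives in the theorem statement: either $d_1=2$ with $d_0$ odd, or $d_1=3$ with $d_0=4k+2$. I would present this as a short case analysis on the value of $b\in\{2,3\}$, checking that every surviving pair indeed appears in the list and that nothing else does.

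There is essentially no obstacle here beyond bookkeeping, since the two corollaries do all the heavy lifting; the only point requiring a moment's care is the boundary convention $a\geq b\geq 2$ and confirming that the condition $a=4k+2$ with $k\in\NN$ is compatible with $a\geq b=3$ (it is, for all $k\geq 1$, and one should note whether $k=0$ giving $a=2<3$ is correctly excluded by the constraint $a\geq b$). I would explicitly remark that the hypothesis $a\geq b$ rules out the degenerate small cases so that the clean statement holds. The proof is thus a one-line reduction followed by a transcription of Corollary~\ref{cor:char-2-n-1}'s negation.

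\begin{proof}
    By Corollary~\ref{cor:char-2-n-2}, if $n \geq 2$ then $R/I$ fails to have the strong Lefschetz property in characteristic two; hence the property can hold only when $n = 1$.

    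Suppose $n = 1$, and write $a = d_0 \geq d_1 = b \geq 2$. By Corollary~\ref{cor:char-2-n-1}, $R/I_{(a,b)}$ fails to have the strong Lefschetz property in characteristic two if and only if (i) $b = 2$ and $a$ is even, (ii) $b = 3$ and $a \not\equiv 2 \pmod 4$, or (iii) $b \geq 4$. Thus $R/I_{(a,b)}$ has the strong Lefschetz property if and only if none of these conditions hold.

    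Negating (iii) gives $b \leq 3$, so $b \in \{2, 3\}$. If $b = 2$, then negating (i) requires $a$ to be odd. If $b = 3$, then negating (ii) requires $a \equiv 2 \pmod 4$, that is, $a = 4k + 2$ for some $k \in \NN$ (here $k \geq 1$, since $a \geq b = 3$ forces $a \geq 6$). Conversely, each such pair fails all of (i)--(iii), and so has the strong Lefschetz property. Recalling $a = d_0$ and $b = d_1$, this is exactly the statement that $R/I$ has the strong Lefschetz property if and only if $n = 1$ and either $d_0$ is odd with $d_1 = 2$, or $d_0 = 4k + 2$ for some $k \in \NN$ with $d_1 = 3$.
\end{proof}
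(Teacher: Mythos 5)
Your proposal is correct and follows exactly the paper's own route: the paper proves this theorem simply by combining Corollary~\ref{cor:char-2-n-1} and Corollary~\ref{cor:char-2-n-2}, and your negation of the failure conditions (including the check that $a\geq b=3$ excludes $a=2$ in the $b=3$ case) is the same bookkeeping the paper leaves implicit.
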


Moreover, combining Theorem~\ref{thm:slp-dd} ($n = 1$), Theorem~\ref{thm:slp-ddd} ($n = 2$), Proposition~\ref{pro:slp-dddd} ($n = 3$),
and Corollary~\ref{cor:d-fixed-slp} ($n \geq 4$), we completely classify the strong Lefschetz property for monomial complete intersections
generated by monomials all having the same degree.

\begin{theorem} \label{thm:classify-fixed-d}
    Let $d \geq 2$, $n \geq 1$, and $I = (x_0^d, \ldots, x_n^d) \subset R = K[x_0, \ldots, x_n]$, where $K$ is an infinite field of
    characteristic $p$.  Then $R/I$ has the strong Lefschetz property if and only if $p$ is zero or $p$ is a positive prime and either
    \begin{enumerate}
        \item $n = 1$ and $p^s > 2(d-1)$, where $s$ is the largest integer such that $p^{s-1}$ divides $(2d-1)(2d+1)$, or
        \item $n \geq 2$ and $p > (n+1)(d-1)$.
    \end{enumerate}
\end{theorem}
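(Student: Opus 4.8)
The plan is to prove the theorem by a case analysis on the number of variables, since the classification threshold takes a genuinely different form when $n = 1$ than when $n \geq 2$. The unified statement packages four separately established results, one for each range of $n$, so the proof reduces to invoking the appropriate result in each case and checking that its stated condition coincides with the formula in the theorem.

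First I would dispose of $n = 1$ by citing Theorem~\ref{thm:slp-dd}, which classifies the strong Lefschetz property for $R/I_{(d,d)}$ in two variables. That result states $R/I_d$ has the strong Lefschetz property if and only if $p = 0$ or $2d - 2 < p^s$, where $s$ is the largest integer with $p^{s-1} \mid (2d-1)(2d+1)$; this is exactly part (i). The appearance of the factor $(2d-1)(2d+1)$ reflects the syzygy-gap computation underlying Theorem~\ref{thm:slp-dd}, and is precisely the reason the $n = 1$ case cannot be folded into the clean linear bound $(n+1)(d-1)$ that governs the remaining cases.

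Next I would handle the cases $n \geq 2$, where $(n+1)(d-1)$ is the socle degree and the threshold is simply $p > (n+1)(d-1)$. For $n = 2$ I would cite Theorem~\ref{thm:slp-ddd}, noting $(n+1)(d-1) = 3(d-1)$; for $n = 3$ I would cite Proposition~\ref{pro:slp-dddd}, noting $(n+1)(d-1) = 4(d-1)$; and for $n \geq 4$ I would cite Corollary~\ref{cor:d-fixed-slp} directly, which already states the threshold as $(n+1)(d-1)$. In each subcase the cited result delivers precisely the required bound, so no additional argument is needed to reach part (ii).

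The main obstacle lies not in this proof itself, which is a routine assembly, but in the four underlying results, each of which rests on different machinery: the syzygy-gap continuation of Han (Theorem~\ref{thm:Han}) for $n = 1$, the explicit small-degree non-Koszul syzygies built from Lemma~\ref{lem:std-syz} via the Frobenius homomorphism for $n = 2$ and $n = 3$, and the Kustin--Vraciu classification (Theorem~\ref{thm:d-fixed-many-var}) for $n \geq 4$. The only care required in the assembly is to confirm that the $n = 1$ condition is genuinely distinct from the linear bound and that the socle degree $(n+1)(d-1)$ matches the threshold appearing in each of the three higher-dimensional results; once this alignment is verified, the statement follows immediately.
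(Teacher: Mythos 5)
Your proposal is correct and matches the paper's own proof exactly: the theorem is stated in Section~\ref{sec:conclusions} as the combination of Theorem~\ref{thm:slp-dd} ($n=1$), Theorem~\ref{thm:slp-ddd} ($n=2$), Proposition~\ref{pro:slp-dddd} ($n=3$), and Corollary~\ref{cor:d-fixed-slp} ($n\geq 4$), which is precisely your case split. Your observation that the socle degree $(n+1)(d-1)$ aligns the thresholds in the $n\geq 2$ cases is the only verification needed, and it checks out.
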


By Theorem~\ref{thm:slp}, for a monomial complete intersection generated in degrees $d_0 \geq \cdots \geq d_n \geq 2$, the presence
of the strong Lefschetz property is uniform for primes at least $d_0$.  However, for small primes (those less than $d_0$), the
strong Lefschetz property appears to behave chaotically when arbitrary degree sequences $\tup{d} = (d_0, \ldots, d_n)$ are considered.  
However, some restrictions, such as characteristic two or a fixed generating degree, can limit this apparent chaos to only the case of two 
variables.  This suggests that perhaps more focus should be given to two variables.

\begin{question}
    For which prime characteristics $p$ does the algebra $K[x,y]/(x^a, y^b)$, where $a \geq b \geq 2$, fail to have the strong Lefschetz property?
\end{question}

Unfortunately, Proposition~\ref{pro:wlp-naive} has a gap when the socle degree $t$ is even and $p = \frac{t}{2}+1$.  Experimentally, the weak Lefschetz property always holds
in this case.  However, Corollary~\ref{cor:wlp-even} cannot be used in this specific case.  As an example, consider $A = K[w,x,y]/(w^5, x^5, y^5)$
and $B = K[w,x,y,z]/(w^5, x^5, y^5, z^2)$.  In this case, $A$ has the weak Lefschetz property in characteristic $7$, but $B$ does not.

We formalise the above experimental results.

\begin{conjecture} \label{conj:wlp-even-gap}
    Let $t$ be the socle degree of $R/I_{\tup{d}}$.  If $t$ is even and the characteristic of $K$ is $p = \frac{t}{2}+1$, then
    the algebra $R/I_{\tup{d}}$ has the weak Lefschetz property.
\end{conjecture}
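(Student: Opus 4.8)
The plan is to reduce the conjecture to a question about a single multiplication map and then to the Jordan type of an explicit nilpotent operator, where the genuine difficulty lies. First, if $d_0 > \frac{t}{2} = \clfr{t}{2}$ (recall $t$ is even), then $R/I_{\tup{d}}$ has the weak Lefschetz property for every characteristic by Proposition~\ref{pro:large-degree}, and we are done. So I would assume $d_0 \leq \frac{t}{2}$. Since $p = \frac{t}{2}+1$, this gives $d_0 \leq p-1$, hence \emph{every} generating degree satisfies $d_i < p$. Consequently $x_i^p = 0$ in $A := R/I_{\tup{d}}$ for all $i$, so $\ell^p = (x_0 + \cdots + x_n)^p = \sum_i x_i^p = 0$; that is, $\times\ell$ is nilpotent of index at most $p$, and all its Jordan blocks have length $\leq p$. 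Moreover, in this regime $A \cong \bigotimes_{i=0}^n K[x_i]/(x_i^{d_i})$, and $\times\ell = \sum_i (\times x_i)$ is a sum of commuting nilpotents, i.e.\ the tensor product of Jordan blocks of sizes $d_0,\ldots,d_n$ acting by the Leibniz (Clebsch--Gordan) rule.

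Next I would cut the problem down to one map. Because $R/I_{\tup{d}}$ is a complete intersection, it is Gorenstein, hence level, with a symmetric unimodal Hilbert function whose peak is at $s = \frac{t}{2} = p-1$. By Proposition~\ref{pro:mmn21}(ii) together with Gorenstein duality, the weak Lefschetz property is equivalent to injectivity of the single middle map $\times\ell : [A]_{s-1} \rightarrow [A]_s$, with $s-1 = p-2$. In Jordan-theoretic terms, this map fails to be injective precisely when some Jordan block of $\times\ell$ has its top (socle) element in degree $p-2$. The Gorenstein pairing $[A]_i \times [A]_{t-i} \rightarrow [A]_t \cong K$ makes $\times\ell : [A]_i \rightarrow [A]_{i+1}$ dual to $\times\ell : [A]_{t-i-1} \rightarrow [A]_{t-i}$, so the ranks, and hence the whole block configuration, are symmetric about the center $s$. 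A block topping at $p-2$ therefore has bottom $\geq 0$, so length at most $p-1$, and reflects to a block of the same length occupying degrees $[p,\,p-1+r]$. Thus the entire conjecture becomes: \emph{at $p = \frac{t}{2}+1$, with all $d_i < p$, the tensor product $\bigotimes_i [d_i]$ has no Jordan block topping out at degree $p-2$.}

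The main obstacle is that determining the Jordan type of $\sum_i (\times x_i)$ on $\bigotimes_i K[x_i]/(x_i^{d_i})$ in characteristic $p$ is exactly the central hard problem of the subject, with no uniform closed form at this boundary value. It is worth emphasising why the easy route is unavailable: applying $\times\ell^2$ would square the map and land us in the Proctor formula of Theorem~\ref{thm:det} for $M_{(\tup{d},2)}$, whose socle degree $t+1$ forces $p = \frac{t}{2}+1 = \clfr{(t+1)+1}{2}$ to be a potential divisor of the determinant; this is not an accident but genuinely occurs, as the algebra $K[w,x,y,z]/(w^5,x^5,y^5,z^2)$ (socle degree $13$) fails the weak Lefschetz property in characteristic $7$. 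Hence Corollary~\ref{cor:wlp-even} cannot see this case, and one is forced to analyze the single-power map directly. The honest heuristic sharpens the difficulty: in the characteristic-zero $\mathfrak{sl}_2$-picture the weight-$(-2)\to 0$ raising coefficient on the top block (highest weight $\lambda = t$) equals $\frac{\lambda}{2}+1 = \frac{t}{2}+1 = p \equiv 0 \pmod p$, so block-by-block the map appears to degenerate. The point the proof must make rigorous is that this degeneration is an artifact of denominators in the weight basis that do not survive reduction modulo $p$: the actual $0/1$ matrix of $\times\ell$ in the \emph{monomial} basis still has full column rank.

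To attack this, I would try to control the $p$-adic valuation of the maximal minors of the (non-square) middle matrix through its Smith normal form, reducing the conjecture to showing that the largest invariant factor of the integer matrix is coprime to $p$. Concretely, I would use the Gessel--Viennot interpretation of Remark~\ref{rem:nilps} to exhibit one well-chosen $\dim[A]_{s-1}\times\dim[A]_{s-1}$ minor counted by signed non-intersecting lattice paths whose enumeration is coprime to $p$, or, when $n=1,2$, deploy Han's continuous syzygy-gap function (Theorem~\ref{thm:Han}) at the boundary scale $p^s$ to locate the relevant gap, as in the two- and three-variable classifications already carried out. An alternative is induction on the number of variables via the recursion for tensoring with a block $[d_n]$; the difficulty there is that the target degree $\frac{t}{2}$ shifts as variables are peeled off while $p$ stays fixed, so the inductive hypothesis must be stated uniformly over a family of boundary primes, and keeping track of which blocks cross the moving center is the delicate bookkeeping I expect to be the true crux.
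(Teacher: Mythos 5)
The statement you are addressing is stated in the paper as a \emph{conjecture}, and the paper offers no general proof of it; the only thing proved there is the case $n=2$, via a remark that enlarges the ideal by a single monomial $x^{\alpha}y^{\beta}$ (with $\alpha = p-b$, $\beta = b-1$), compares the two middle multiplication maps through a commutative diagram, and invokes a threshold result of \cite{CN-Enum} for the enlarged algebra. Your proposal is likewise not a proof. The reductions you carry out are correct and standard: disposing of $d_0 > \frac{t}{2}$ by Proposition~\ref{pro:large-degree}; observing that otherwise all $d_i < p$ so $\ell^p = 0$; reducing the weak Lefschetz property to injectivity of the single map $\times\ell : [A]_{p-2} \rightarrow [A]_{p-1}$ via Proposition~\ref{pro:mmn21} and Gorenstein duality; and rephrasing that injectivity as the absence of a Jordan block of $\times\ell$ topping out in degree $p-2$. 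But at that point you explicitly hand the problem back: no argument is given that such a block cannot occur, and you yourself identify this as ``exactly the central hard problem of the subject.'' The three attack routes you sketch (a well-chosen maximal minor via Gessel--Viennot, Han's syzygy-gap function for small $n$, induction on the number of variables) are each left as proposals with acknowledged unresolved difficulties, so the crux of the conjecture remains untouched.

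Two further cautions on the routes you suggest. First, the determinant machinery of Theorem~\ref{thm:det} is stated only for odd socle degree, precisely because the middle matrix is then square; in the even case at hand the matrix $[A]_{p-2} \rightarrow [A]_{p-1}$ is genuinely non-square, so one must control \emph{all} maximal minors (or exhibit one unit minor mod $p$), and nothing in the paper's toolkit does this at the boundary prime $p = \frac{t}{2}+1$. Second, be careful with the claim that the Jordan block configuration is ``symmetric about the center'': what duality gives directly is symmetry of the rank sequence of the powers of $\times\ell$, from which the symmetry of the block supports follows, but this symmetry by itself does not forbid a block ending at $p-2$ --- it only pairs it with a block starting at $p$. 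If you want a provable piece to extract from your framework, the $n=2$ case is the one the paper actually settles, and it does so by a concrete ideal-enlargement argument rather than by Jordan-type considerations; reconciling your reduction with that argument would be a reasonable, bounded next step.
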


Conjecture~\ref{conj:wlp-even-gap} is true when $n = 2$.  

\begin{remark}
    Let $a \geq b \geq c \geq 2$ such that $t = a+b+c-3 = 2(p-1)$ for some prime $p$, and suppose that $a \leq \clfr{t}{2} = p - 1$.
    Set $A = K[x,y,z]/(x^a, y^b, z^c)$.  Let $\alpha = p - b$ and $\beta = b - 1$, and notice that $0 < \alpha < a$.  
    Consider the following commutative diagram, where $B = K[x,y,z]/(x^a, y^b, z^c, x^{\alpha} y^{\beta})$ and $\ell = x+y+z$.

    \begin{displaymath}
        \xymatrix{
            [A]_{p-2} \ar[r]^{\times\ell} \ar[d]^{\cong} & [A]_{p-1} \\
            [B]_{p-2} \ar[r]^{\times\ell} & [B]_{p-1} \ar@{^{(}->}[u] \\
        }
    \end{displaymath}
    
    Thus, the top map is injective if the bottom map is.  Using~\cite[Proposition~6.12]{CN-Enum}, we see that $B$ has the weak
    Lefschetz property, and thus the bottom map is injective, if the characteristic of $K$ is at least $\frac{a+b+c+\alpha+\beta}{3} = p$.
    Hence the top map is injective and $A$ has the weak Lefschetz property in characteristic $p$.
\end{remark}

Moreover, through experiments using {\em Macaulay2}~\cite{M2}, we conjecture that when $d_0$ is ``small'' (i.e., when the weak Lefschetz property is
{\em not} guaranteed to hold by Proposition~\ref{pro:large-degree}), then the strong Lefschetz property only holds when guaranteed by 
Theorem~\ref{thm:slp}.  Notice that Theorems~\ref{thm:classify-char-two} and~\ref{thm:classify-fixed-d} provide evidence for this conjecture.

\begin{conjecture} \label{conj:slp-small}
    Suppose $d_0 \leq \clfr{t}{2}$, where $t$ is the socle degree of $R/I_{\tup{d}}$.  Then $R/I_{\tup{d}}$ has the strong Lefschetz property
    if and only if the characteristic of $K$ is either $0$ or greater than $t$.
\end{conjecture}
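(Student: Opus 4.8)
The plan is to prove the two directions separately, leaning on the \emph{a priori} bounds already established. The ``if'' direction is immediate: when $p = 0$ we invoke Theorem~\ref{thm:char-zero}, and when $p > t$ we invoke Theorem~\ref{thm:slp}(ii). So the entire content lies in the ``only if'' direction, namely that whenever $d_0 \leq \clfr{t}{2}$ and the characteristic is a prime $p$ with $2 \leq p \leq t$, the algebra $R/I_{\tup{d}}$ fails to have the strong Lefschetz property. By Proposition~\ref{pro:slp} this reduces to producing, for each such $p$, an index $0 \leq k \leq \flfr{t}{2}$ for which the sandwiched algebra $A_k := R[x_{n+1}]/I_{(\tup{d}, t-2k)}$ fails to have the weak Lefschetz property.

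First I would dispose of the primes already covered by Theorem~\ref{thm:slp}(i). Since $d_0 \leq \clfr{t}{2}$ forces $2d_0 - t \leq 0 < d_1$, that result already yields failure of the strong Lefschetz property for every prime $p$ lying in $[d_1, d_0]$ or admitting a power $p^m \in [d_0, t]$. What remains is the \emph{gap}: primes $p$ with $p < d_1$ for which the interval $[d_0, t]$ contains no power of $p$. Because an interval whose endpoints satisfy $t \geq p \cdot d_0$ always contains a power of $p$, while $t/d_0 \geq t/\clfr{t}{2} \geq 2$, such a gap prime must satisfy $t/d_0 < p < d_1$; these are the ``moderately sized'' primes, too large for their powers to land in the increasing range of the Hilbert function yet too small (since $p < d_1$) to annihilate $\ell^{d_0}$. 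In particular $p > 2$, so the gap is genuinely absent in characteristic two, consistent with Corollary~\ref{cor:char-2-n-2}.

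For these gap primes the tools of Section~\ref{sec:bound} are silent, so I would attack $A_k$ directly by exhibiting an explicit non-Koszul syzygy of low degree, using Proposition~\ref{pro:mgd-wlp}: it suffices to find one element of $\overline{\syz}(\tup{d}, t-2k)$ of degree strictly below $\flfr{t_k+3}{2}$, where $t_k = 2(t-k)-1$ is the (odd) socle degree of $A_k$. The model is the construction in Lemma~\ref{lem:std-syz} and Proposition~\ref{pro:dddd-3}, where a base syzygy is raised to the $p$-th power by the Frobenius homomorphism and then multiplied by a suitable monomial so that its total degree, namely $\deg(\text{base})\cdot p$ plus a monomial correction, lands just under the required bound precisely because some $p^{m-1} < d_0$ while $p^m > t$. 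Writing $t-2k = qp + r$ with $0 \leq r < p$ and splitting on the parities of $q$ and $r$, exactly as in the proof of Proposition~\ref{pro:dddd-3}, should let one position the syzygy's degree correctly for a well-chosen $k$.

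The hard part will be carrying out this syzygy construction \emph{uniformly in the arbitrary degree sequence} $\tup{d}$. Every successful case in the paper exploits special structure---Kummer's theorem and the combinatorics of $\BP(\cdot)$ in characteristic two, the Kustin--Vraciu classification when all $d_i$ agree, or a small number of variables---and none of these is available for a general $\tup{d}$ harboring a gap prime. An alternative route is to analyze Proctor's product (Theorem~\ref{thm:det}) for the determinant of $M_{(\tup{d}, t-2k)}$ and show that some gap prime divides the rising-factorial product $\prod_i \rf{i+1}{d_0'}^{\delta_i}$, where $d_0' = \max\{d_0, t-2k\}$; this converts the problem into a $p$-adic valuation estimate on rising factorials, but controlling that valuation across all $k$ simultaneously appears to require new input. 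It is precisely this absence of a general mechanism for the gap primes---rather than any single recalcitrant calculation---that keeps the statement a conjecture, and a complete proof would most plausibly come either from a uniform Frobenius-based syzygy machine or from a sharp divisibility analysis of Proctor's formula.
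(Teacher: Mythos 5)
The statement you are addressing is Conjecture~\ref{conj:slp-small}; the paper offers no proof of it, and your proposal does not supply one either --- as you yourself concede in your closing paragraph. Your reductions are sound as far as they go: the ``if'' direction follows from Theorem~\ref{thm:char-zero} and Theorem~\ref{thm:slp}(ii), and since $d_0 \leq \clfr{t}{2}$ forces $2d_0 - t \leq 1 < d_1$, Theorem~\ref{thm:slp}(i) correctly disposes of every prime $p$ with $d_1 \leq p \leq d_0$ or with a power in $[d_0, t]$. Your identification of the residual set is also essentially right: the unhandled primes are exactly those $p < d_1$ for which $[d_0,t]$ contains no power of $p$, which forces $t/d_0 < p < d_1$. (Your inequality $t/d_0 \geq 2$ fails by a hair when $t$ is odd and $d_0 = \clfr{t}{2}$, but the conclusion that $p=2$ is never such a prime survives, because $[\clfr{t}{2}, t]$ always contains a power of $2$, so $p = 2$ is always caught by Theorem~\ref{thm:slp}(i).)

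The genuine gap is that nothing is actually proved for these remaining primes. The two mechanisms you gesture at --- a Frobenius-twisted low-degree syzygy in the style of Lemma~\ref{lem:std-syz} and Proposition~\ref{pro:dddd-3}, or a $p$-adic valuation estimate on Proctor's product in Theorem~\ref{thm:det} --- are indeed the natural candidates, but every instance in the paper where either succeeds leans on special structure (all $d_i$ equal, characteristic two, or a small number of variables), and no uniform construction over an arbitrary degree sequence $\tup{d}$ is given or currently known. Your text is therefore a correct framing of why the statement is open rather than a proof of it; the only evidence the paper itself supplies is consistency with the cases it does settle (Theorems~\ref{thm:classify-char-two} and~\ref{thm:classify-fixed-d}), and any genuine progress would have to come from making one of your two proposed mechanisms work for the gap primes.
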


\begin{acknowledgement}
    The author would like to acknowledge the invaluable nature of the computer algebra system {\em Macaulay2}~\cite{M2}.
    {\em Macaulay2} was used extensively throughout both the original research and the writing of this manuscript. 

    The author would also like to thank his advisor, Uwe Nagel, for many discussions about this manuscript.
\end{acknowledgement}


\end{document}